\newtheorem{theorem}{Theorem}[section]
\newtheorem{lemma}[theorem]{Lemma}
\newtheorem{proposition}[theorem]{Proposition}
\newtheorem{conjecture}[theorem]{Conjecture}
\newtheorem{corollary}[theorem]{Corollary}
\theoremstyle{definition}
\newtheorem{definition}[theorem]{Definition}
\theoremstyle{remark}
\newtheorem{remark}[theorem]{Remark}
\numberwithin{equation}{section}
\DeclareMathOperator{\Fil}{Fil}
\newcommand\Z{\ensuremath{\mathbb Z}}
\newcommand\Q{\ensuremath{\mathbb Q}}
\newcommand\F{\ensuremath{\mathbb F}}
\newcommand\Div{\operatorname{Div}}
\newcommand\GL{\operatorname{GL}}
\newcommand\Hom{\operatorname{Hom}}
\newcommand\im{\operatorname{im}}
\newcommand\M{\operatorname{M}}
\newcommand\ord{\operatorname{ord}}
\newcommand\PGL{\operatorname{PGL}}
\newcommand\SL{\operatorname{SL}}
\newcommand{\cH}{\mathcal{H}}
\newcommand{\cD}{\mathcal{D}}
\newcommand{\cT}{\mathcal{T}}
\newcommand{\cU}{{\mathcal{U}}}
\newcommand{\cV}{{\mathcal{V}}}
\newcommand{\cE}{{\mathcal{E}}}
\newcommand{\cF}{{\mathcal{F}}}
\newcommand{\meas}{\operatorname{Meas}}
\newcommand{\hc}{\cF_{\text{har}}}
\newcommand{\tto}[1]{%
\ifthenelse{\equal{#1}{}}{\to}{\stackrel{#1}{\to}}}
\def\cO{{\mathcal O}}
\newcommand{\mtx}[4]{\left(\begin{matrix}#1&#2\\#3&#4\end{matrix}\right)}
\newcommand{\smtx}[4]{\left(\begin{smallmatrix}#1&#2\\#3&#4\end{smallmatrix}\right)}
\newcommand{\tns}{\otimes}
\def\M{\operatorname{M}}
\def\P{\mathbb P}
\newcommand{\comp}{\begin{picture}(6,5)(-3,-2)\put(0,1){\circle{2}} \end{picture}}\def\circ{\comp}
\def\cY{\mathcal Y}
\def\cB{\mathcal B}
\def\ol{\overline}
\newcommand{\ra}{\rightarrow}
\newcommand{\lra}{\longrightarrow}
\newcommand{\QQ}{\Q}
\newcommand{\PP}{\mathbb{P}}
\newcommand{\ZZ}{\Z}
\newcommand{\GM}{\Gamma^D_0(M)}
\newcommand{\GMp}{\Gamma^D_0(pM)}
\newcommand{\GMhat}{\hat{\Gamma}^D_0(M)}
\newcommand{\opU}{U}
\newcommand{\opT}{T}
\newcommand{\opW}{W}
\def\Xint#1{\mathchoice
{\XXint\displaystyle\textstyle{#1}}%
{\XXint\textstyle\scriptstyle{#1}}%
{\XXint\scriptstyle\scriptscriptstyle{#1}}%
{\XXint\scriptscriptstyle\scriptscriptstyle{#1}}%
\!\int}
\def\XXint#1#2#3{{\setbox0=\hbox{$#1{#2#3}{\int}$}
\vcenter{\hbox{$#2#3$}}\kern-.5\wd0}}
\newcommand{\charf}{{\mathds{1}}}
\begin{document}

\title[Overconvergent cohomology and Darmon points]{Overconvergent cohomology \\ and quaternionic Darmon points}

\date{\today}

\author{Xavier Guitart}
\address{Institut f\"ur Experimentelle Mathematik, Essen, Germany}
\curraddr{}
\email{xevi.guitart@gmail.com}

\author{Marc Masdeu}
\address{University of Warwick, Coventry, United Kingdom}
\curraddr{}
\email{m.masdeu@warwick.ac.uk}


\maketitle
\begin{abstract}
We develop the (co)homological tools that make effective the construction of the quaternionic Darmon points introduced by Matthew Greenberg. In addition, we use the overconvergent cohomology techniques of Pollack--Pollack to allow for the efficient calculation of such points. Finally, we provide the first numerical evidence supporting the conjectures on their rationality.
\end{abstract}

\section{Introduction}
Let $E$ be an elliptic curve over $\Q$ of conductor $N$ and let $p$ be a prime dividing $N$ exactly.
Consider a factorization of the form $N=pDM$, with $D$ the product of an even (possibly zero) number of
distinct primes and  $(D,M)=1$. Let $K$ be a real quadratic field in which all primes dividing $M$ are split, and all primes 
dividing $pD$ are inert. Denote by $\cH_p=K_p\setminus \Q_p$ the $K_p$-points of the $p$-adic upper half plane. 

In the case $D=1$, Darmon introduced in the seminal article \cite{darmon-integration} a construction of local points
$P_\tau\in E(K_p)$ associated to elements $\tau\in K\cap\cH_p $, defined as certain Coleman
integrals of the modular form attached to $E$. He conjectured these points to be rational over certain ring class
fields
of $K$, and to behave in many aspects as the classical Heegner points arising from quadratic imaginary fields. A proof of these conjectures would certainly shed new light on new instances of the Birch--Swinnerton-Dyer conjecture. The reader can consult~\cite[Section 5]{darmon-integration} and~\cite[Section 4]{darmon-green} for a discussion of this circle of ideas.

These
conjectures 
are supported by some partial theoretical results such as \cite{bertolini-darmon-genus}, but at the moment the main evidence comes from 
explicit numerical computations. Darmon--Green \cite{darmon-green} provided the first systematic algorithm and 
numerical calculations for curves satisfying the additional restriction that $M=1$. Using overconvergent 
methods in the evaluation of the integrals Darmon--Pollack \cite{darmon-pollack} were able to give a 
much faster algorithm, which  in practice can be used
(assuming Darmon's conjectures) as an efficient method for computing algebraic points of infinite order on 
$E(K^{\text{ab}})$. The restriction $M=1$ in these algorithms was dispensed with in \cite{dphc}, which
allowed to provide numerical evidence for curves of non-prime conductor.

In the case $D>1$, Greenberg \cite{Gr} proposed a construction of Darmon-like points in $E(K_p)$, by means of certain 
$p$-adic integrals related to modular forms on quaternion division algebras of discriminant $D$. He also conjectured 
that these points behave in many aspects as Heegner points and, in 
particular, that they are rational over ring class fields of $K$.

 Greenberg's conjecture was motivated 
by the analogy with \cite{darmon-integration}, but up to now there was no numerical evidence of the rationality of such 
points in the quaternionic case $D>1$. In fact, as Greenberg points out in~\cite[Section 12]{Gr}, the lack of sufficiently developed algorithms for computing in the cohomology of arithmetic groups has prevented the finding of any such evidence. The main purpose of the present work is precisely to provide an explicit algorithm that allows for the effective computation of the quaternionic $p$-adic Darmon points introduced by Greenberg.

Actually, the aim of the article is threefold. Firstly, we develop the (co)homological methods that make effective the construction of~\cite{Gr}. Secondly, we relate the $p$-adic integrals that appear in the construction to certain overconvergent cohomology classes, in order to derive an efficient algorithm for the computation of the quaternionic Darmon points. Finally, we gather extensive evidence supporting the rationality conjectures of~\cite{Gr}.

In order to describe more precisely the contents of the article, it is useful to briefly recall the structure of Greenberg's construction (a more complete and detailed account will be given in Section~\ref{subsection: Greenberg construction}). Let $B/\Q$ be the indefinite quaternion algebra of discriminant $D$. Let also $R_0(M)\subset B$ be an Eichler order of level $M$, and denote by $\Gamma$ the group of reduced norm $1$ units in $R_0(M)\otimes_\Z\Z[1/p]$. The construction of the point $P_\tau\in E(K_p)$ can be divided into three stages:
\begin{enumerate}
\item The construction a certain cohomology class $\mu_E\in H^1(\Gamma,\text{Meas}(\mathbb{P}^1(\Q_p),\Z))$ canonically attached to $E$, where $\text{Meas}(\P^1(\Q_p),\Z)$ denotes the $\Z$-valued measures of $\P^1(\Q_p)$;
\item The construction of a homology class $c_\tau\in H_1(\Gamma,\Div^0\cH_p)$, associated to the element $\tau\in \cH_p$; and
\item Finally, the construction of a natural $K_p^\times$-valued integration pairing $\Xint\times\langle\, , \, \rangle $ between the above cohomology and homology groups.
\end{enumerate}
The point $P_\tau$ is then defined as the image under Tate's isomorphism $K_p^\times/\langle q_E\rangle \simeq E(K_p)$ of the quantity $J_\tau:=\Xint\times \langle c_\tau,\mu_E\rangle \in K_p^\times$.

Section~\ref{subsection: 
Background} is devoted to background material and to fix certain choices on the (co)homology groups that will be useful in our algorithms, and in Section~\ref{subsection: Greenberg construction} we give a more detailed description of the construction of Greenberg. The main contributions of this work are presented in Sections~\ref{sec: the algorithm}, \ref{sec: overconvergent}, and \ref{sec: numerical evidence}.

In Section~\ref{sec: the algorithm} we provide algorithms for computing the homology class $c_\tau$ and the cohomology class $\mu_E$. That is to say, we give explicit methods for working with the (co)homology groups arising in the construction, which allow for the effective numerical calculation of $\mu_E$ and $c_\tau$ in concrete examples. This already gives rise to an algorithm for the calculation of the point $P_\tau$, since the integration pairing $\Xint\times\langle c_\tau ,\mu_E\rangle $ can then be computed by the well known method of Riemann products. We end Section~\ref{sec: the algorithm} with a detailed concrete calculation of a Darmon point $P_\tau$ by means of this algorithm. 

Although the method of Riemann products is completely explicit and can be used in principle to evaluate the integration pairing, it has the drawback of being computationally inefficient. In fact, its running time depends exponentially on the number of $p$-adic digits of accuracy to which the output is desired. This is the problem that we address in Section~\ref{sec: overconvergent}, in which we give an efficient, polynomial-time, algorithm for computing the integration pairing $\Xint\times\langle \, , \, \rangle $. This method is  based on the overconvergent cohomology lifting theorems of~\cite{pollack-pollack}, and can be seen as a generalization to the quaternionic setting of the overconvergent modular symbols method of~\cite{darmon-pollack}. Used in conjunction  with the algorithms of Section~\ref{sec: the algorithm} for the homology and cohomology classes, it provides an efficient algorithm for computing the quaternionic Darmon points.

Finally, in Section~\ref{sec: numerical evidence} we provide extensive calculations and numerical evidence in support of the conjectured rationality of Greenberg's Darmon points, which were computed using an implementation in Sage (\cite{sage}) and Magma (\cite{magma}) of the algorithms described in Sections~\ref{sec: the algorithm} and \ref{sec: overconvergent}.

\subsection*{Acknowledgments} We are grateful to Victor Rotger for suggesting the
problem, as well as to Henri Darmon, Matthew Greenberg, Matteo Longo, Robert Pollack, Eric Urban, and John Voight for valuable 
exchanges and suggestions. We also wish to express our gratitude to the anonymous referee, whose comments encouraged us to strengthen some of the results. Guitart wants to thank the Max Planck
Institute for Mathematics and the Hausdorff Research Institute for Mathematics. The work in this article was carried out while Masdeu was at Columbia University as a Ritt assistant professor. Both authors were partially supported by MTM2009-13060-C02-01 and 2009 SGR 1220, and Guitart was also partially supported by SFB/TR~45.

\subsection*{Notation} The following notation shall be in force throughout the article. Let $E$ be an elliptic curve over
$\Q$ of conductor $N$  and let $p$ be a prime dividing $N$ exactly. The conductor is factored as $N=pDM$, where $D>1$
is the product of an even number of distinct primes and $M$ and $D$ are relatively prime. Let $K$ be a real
quadratic field in which all primes dividing $M$ are split and all primes dividing $pD$ are inert, and let $\cO_K$ be the
ring of integers of $K$.

Let $B$ be the quaternion algebra over $\Q$ of discriminant $D$. For every $\ell\mid pM$ we fix an algebra
isomorphism
\[
\iota_\ell\colon B\otimes_\Q
\Q_\ell\stackrel{\simeq}{\lra} \M_2(\Q_\ell).
\]
Let $R_0(M)\subset B$ be an Eichler order of level $M$ such that for every $\ell\mid M$  
\begin{align}\label{eq: Eichler order locally}
 \iota_\ell(R_0(M))=\left\{\smtx a b c d \in \M_2(\Z_\ell) \colon c\equiv 0 \pmod \ell\right\}.
\end{align}
Similarly, let
$R_0(pM)\subset R_0(M)$ be an Eichler order of level $pM$ that satisfies \eqref{eq: Eichler order locally} also for
$\ell=p$. Denote by $\GM=R_0(M)^\times_1\ \text{ and }\ \GMp=R_0(pM)_1^\times$ their group of reduced norm $1$ units. Finally,
let
\[R=R_0(M)\otimes_\Z\Z[1/p]\ \text{ and } \ \Gamma=R_1^\times=\left\{ \gamma\in R\colon 
\operatorname{nrd}(\gamma)=1 \right\}.\]

\section{Preliminaries on Hecke operators, the Bruhat--Tits tree, and measures}\label{subsection: 
Background}
All the material in this section is well-known. We present it particularized to our setting and we fix 
certain choices that will be important especially in Section \ref{sec: overconvergent
cohomology}.

\subsection{Hecke operators on homology and cohomology}
We recall first some well-known facts on group (co)homology which can all be found for example in~\cite{brown}. This will also fix the notation to be used in the sequel.

Let $G$ be a group and $V$ a commutative left
$G$-module. The groups
of \emph{$1$-chains}
and \emph{$2$-chains} are defined, respectively, as
\[
 C_1(G,V)=\Z[G]\otimes_\Z V , \ \ C_2(G,V)=\Z[G]\otimes_\Z \Z[G]\otimes_\Z V.
\]
 The \emph{boundary maps}
are induced by the formulas, for $g$ and $h$ in $G$ and $v\in V$,
\begin{align}\label{eq: boundary maps}
 \partial_1(g\otimes v)&= gv-v;& \partial_2(g\otimes h\otimes v)&=h\otimes g^{-1}v-gh\otimes v+ g\otimes v.
\end{align}

We denote by $Z_1(G,V)=\ker\partial_1$ the group of $1$-cycles, by $B_1(G,V)=\im\partial_2$ the group of $1$-boundaries, and by $H_1(G,V)=Z_1/B_1$ the first homology group of $G$ with coefficients in $V$.


Dually, one defines the group of \emph{$1$-cochains} $C^1(G,V)$, the group of \emph{$1$-coboundaries} $B^1(G,V)$, the group of $1$-cocycles $Z^1(G,V)$ and the first cohomology group $H^1(G,V)=Z^1/B^1$.


We are mainly interested in the (co)homology of the group $G=\GMp$. Consider also the semigroup $\Sigma_0(pM)$ defined as
\begin{align}\label{eq: def of simga_0(PM)}
 \Sigma_0(pM)=B^\times\cap \prod_\ell \Sigma_\ell, \ \text{where }
\end{align}
\[
 \Sigma_\ell = \begin{cases} \text{ the set of elements
in $R_0(pM)$ with non-zero norm}, & \mbox{if } \ell\nmid pM; \\ \iota_\ell^{-1}\left(\{\smtx a b c d\in
\M_2(\Z_\ell)\colon c\equiv 0\pmod \ell, \ d\in\Z_\ell^\times,\
ad-bc\neq 0\}\right), & \mbox{if } \ell\mid pM.
\end{cases}
\]
Suppose that the $\GMp$-action on $V$ extends to an action of the semigroup $\Sigma_0(pM)$. Then there are natural Hecke operators acting on $H_1(\Gamma_0^D(pM),V)$ and $H^1(\Gamma_0^D(pM),V)$ whose definition we proceed to recall, following~\cite{ash-stevens}.

\subsubsection*{The operators $\opT_\ell$ and $\opU_\ell$}

Let $\ell$ be a prime not dividing $D$, and let $g(\ell)\in \Sigma_0(pM)$ be an element of reduced norm
$\ell$. The double coset $\GMp g(\ell)\GMp$ decomposes as a finite disjoint union of right $\GMp$-cosets: 
\begin{align}\label{eq: double cosets}
\GMp g(\ell)\GMp=\bigsqcup_{i\in I_\ell}g_i\GMp,
\end{align}
for certain $g_i\in \Sigma_0(pM)$ of reduced norm $\ell$. The number of cosets in \eqref{eq: double cosets}, i.e., the
cardinal of $I_\ell$, is
$\ell+1$ if $\ell\nmid pM$ and $\ell$ otherwise. Let $t_i:\GMp\ra \GMp$ be the map defined by the equation
\[\gamma^{-1}g_i=g_{\gamma\cdot i}t_i(\gamma)^{-1}, \text{ for some index } \gamma\cdot i \in I_\ell.\] We
remark that $i\mapsto \gamma\cdot i$ is a permutation of $I_\ell$. Decomposition \eqref{eq: double cosets} induces
maps $T_\ell$ on $1$-chains and $1$-cochains
as follows:  for a chain
$c=\sum_{g}g\otimes v_g\in C_1(\GMp,V)$ and a cochain $f\in C^1(\GMp,V)$ then
\begin{align}\label{eq: def of T_ell}
 \opT_\ell c&=\sum_{i\in I_\ell}\sum_g t_i(g)\otimes g_i^{-1}v_g;&
 (\opT_\ell f)(g)&=\sum_{i\in I_\ell} g_if(t_i(g)).
\end{align}
The map $\opT_\ell$ on chains (resp. cochains) respects cycles and boundaries (resp. cocycles and coboundaries). The
Hecke operators are the induced endomorphisms on homology and cohomology
which do not
depend neither on the choice of $g(\ell)$
nor on the representatives $g_i$ of  $\eqref{eq: double cosets}$. Following the usual notational conventions if 
$\ell\mid pM$ we set $\opU_\ell=\opT_\ell$. We remark that the operators $\opT_\ell$ and $\opU_\ell$ on homology and cohomology are independent of the choices made in the definition.
However,
 as maps on chains and cochains (and even as maps on cycles and cocycles) they do
depend on these choices. 

In \S \ref{sec: overconvergent} it will be important to work
with the $\opU_p$-operator on cochains obtained by means of a specific decomposition \eqref{eq: double
cosets} which we now describe. In order to do so, we next fix a choice of certain elements of
$\Sigma_0(pM)$; these elements (and the notation for them) shall be in force
for the rest of the article.
\begin{itemize}
 \item Let $\Upsilon=\{\gamma_0,\dots,\gamma_{p}\}$ be a system of representatives for 
$\GMp\backslash\GM $ satisfying that
\begin{align}\label{eq: def of gammas}
 \gamma_0=1, \ \ \text{and for $i>0$ }\  \iota_p(\gamma_i)=u_i\smtx{0}{-1}{1}{i},
\end{align}
for some  $u_i$ belonging to
\[
 \Gamma_0^{\text{loc}}(p)=\{\smtx a b c d\in \SL_2(\Z_p) \colon c\equiv 0 \pmod p\}.
\]
 \item Let $\omega_p\in R_0(pM)$ be an element that normalizes $\GMp$ and such that
\begin{align}\label{eq: w_p}
 \iota_p(\omega_p)=u'\smtx{0}{-1}{p}{0},\ \text{ for some } u'\in \Gamma_0^{\text{loc}}(p).
\end{align}
Also, let $\omega_\infty\in R(pM)$ be an element of reduced norm $-1$  that  normalizes $\GMp$.

\item Finally, set \begin{align}\label{eq: def of s_i}\pi=\gamma_p^{-1}\omega_p \ \ \text{ and }
s_i=\gamma_i^{-1}\omega_p^{-1}, \text{ for } i=1,\dots, p.\end{align}
We remark that 
\begin{align}\label{eq: local s_i}
 \iota_p(\pi)=\smtx{p}{0}{0}{1}u_\pi \text{ and } \iota_p(s_i)=\smtx{p}{-i}{0}{1}u_i'
\end{align}
for some $u_\pi,u_i'\in \Gamma_0^{\text{loc}}(p)$.
\end{itemize}
Observe that $\pi\in\Sigma_0(pM)$ has reduced norm $p$; we will work with the Hecke operator on cycles and
cocycles associated 
to the double coset $\GMp \pi \GMp$. One checks that the $s_i$ defined above decompose it into right 
cosets, namely
\begin{align}\label{eq: double coset U_p}
 \GMp \pi \GMp = \bigsqcup_{i=1}^p s_i\GMp.
\end{align}
Then $t_i:\GMp\ra \GMp$ is the function defined by 
\begin{align}\label{eq: def of t_k}\gamma^{-1}s_i=s_{\gamma\cdot i}t_i(\gamma)^{-1}, \text{ for certain index }
\gamma\cdot i \in\{1,\dots,p\}.\end{align} For
$c=\sum_{g}g\otimes v_g\in Z_1(\GMp,V)$ and $f\in Z^1(G,V)$ formulas \eqref{eq: def of T_ell} particularize to
\begin{align}\label{eq: def U_p}
 \opU_p c=\sum_{i=1}^p\sum_g t_i(g)\otimes s_i^{-1}v_g; \ \ 
 (\opU_p f)(g)=\sum_{i=1}^p s_if(t_i(g)).
\end{align}

\subsubsection*{Atkin--Lehner involutions} The Atkin-Lehner involutions at $p$ on
cycles and
cocycles are given by the formulas:
\[
 \opW_p c=\sum_g \omega_p^{-1} g \omega_p\otimes \omega_p^{-1}v_g; \ \ 
 (\opW_p f)(g)= \omega_pf(\omega_p^{-1} g\omega_p).
\]
Similarly, Atkin--Lehner involutions at infinity are defined as:
\[
 \opW_\infty c=\sum_g \omega_\infty^{-1} g \omega_\infty\otimes \omega_\infty^{-1}v_g; \ \ 
 (\opW_\infty f)(g)= \omega_\infty f(\omega_\infty^{-1} g\omega_\infty).
\]
These formulas induce well-defined involutions on the homology $H_1(\GMp,V)$ and on the cohomology $H^1(\GMp,V)$.

\subsubsection*{Hecke algebras}Let $[\opT_\ell]$, $[\opU_\ell]$ and $[\opW_\infty]$ be
formal variables. If $m\in\Z_{>0}$ we denote by $\mathbb{T}^{(m)}$ the Hecke algebra ``away from $m$''; i.e., the
$\Z$-algebra generated by $[W_\infty]$ and by the $[T_\ell]$ and $[U_\ell]$ with $\ell\nmid m$.
Since the Hecke operators commute with each other $\mathbb{T}^{(m)}$ acts on $H_1(\GMp,V)$ and
$H^1(\GMp,V)$ by letting each formal variable act as the corresponding Hecke operator. 

If $\lambda: \mathbb{T}^{(m)}\ra \Z$ is a ring homomorphism and $H$ is a $\mathbb{T}^{(m)}$-module let
\[
 H^\lambda=\{x\in H \colon tm=\lambda(t)x \text{ for all } t\in \mathbb{T}^{(m)}\}.
\]
The \emph{degree character} $\deg:\mathbb{T}^{(pD)}\ra \Z$ is defined by
\[
 \deg[\opT_\ell]=\ell+1, \ \deg[\opU_\ell]=\ell, \ \deg \opW_\infty=1.
\]
Thanks to the modularity theorem of~\cite{Wi},~\cite{MR1839918}  the elliptic curve $E/\Q$ defines two characters $\lambda_E^+,\lambda_E^-:\mathbb{T}^{(pD)}\ra \Z$ by
\begin{align}\label{eq: def of lambda}
 \lambda_E^\pm[\opT_\ell]=\ell+1-|E(\F_\ell)|, \ \lambda_E^\pm[\opU_\ell]=\ell+1-|E(\F_\ell)|, \
\lambda_E^\pm[\opW_\infty]=\pm 1.
\end{align}

\begin{remark}
There are also Hecke operators acting on $H_1(\Gamma,V)$ and $H^1(\Gamma,V)$. They are defined similarly, but using 
double cosets of the form $\Gamma g'(\ell) \Gamma$ (this time $g'(\ell)$ 
is an element of $R$ of reduced norm $\ell$); see, e.g., \cite[\S2]{LRV} for more details. 
For our purposes it is enough to say that for $\ell\nmid pM$ one can choose $g(\ell)\in R_0(pM)$ and $g'(\ell)\in R$ 
elements of reduced norm $\ell$ such that the decompositions
\begin{align*}
 \GMp g(\ell)\GMp=\bigsqcup_{i=0}^\ell g_i\GMp \ \text{ and } \Gamma g(\ell)\Gamma=\bigsqcup_{i=0}^\ell g_i\Gamma 
\end{align*}
hold with the same choice of $g_i\in \Sigma_0(pM)$. Thus formulas \eqref{eq: 
def of T_ell} also give the $T_\ell$ operator on $H_1(\Gamma,V)$ and $H^1(\Gamma,V)$ in this case.
\end{remark}

\subsection{The Bruhat-Tits Tree}

Let $\cT$ be the Bruhat--Tits tree of
$\PGL_2(\Q_p)$ and denote by $\cV$ its set of vertices and by  $\cE$ its set of (directed) edges. It is well known that $\cT$ is a 
$(p+1)$-regular 
tree. In addition  $\cV$ can be identified with the set of homothety classes of $\Z_p$-lattices in $\Q_p^2$, and directed
edges with ordered pairs of vertices $(v_1,v_2)$ such that $v_1$ and $v_2$ can be represented by lattices $\Lambda_1$,
$\Lambda_2$ with $p\Lambda_1\subsetneq\Lambda_2\subsetneq \Lambda_1$. For $e=(v_1,v_2)\in \cE$ we denote by $s(e)=v_1$
the source of $e$, by $t(e)=v_1$ its target and by $\bar e=(v_2,v_1)$ its opposite.

Let $v_*$ be the vertex represented by $\Z_p^2$, let $\hat v_*$ be the one represented by
$\Z_p\oplus p\Z_p$, and let $e_*$ be the edge $(v_*,\hat v_*)$. A vertex $v$ is said to be even (resp. odd) if its distance $d(v,v_*)$ to
$v_*$ is even (resp. odd), and $e\in\cE$ is said to be even (resp. odd) if $s(e)$ is even (resp. odd). We
denote by $\cV^+$ (resp. $\cV^-$) the set of even (resp. odd) vertices and by $\cE^+$ (resp. $\cE^-$) the set of even
(resp. odd) edges.

The group $\GL_2(\Q_p)$ acts on $\Q_p$ by fractional linear transformations
\[
 g\tau=\frac{a\tau+b}{c\tau+d}, \quad\text{for } g=\smtx a b c d \in \GL_2(\Q_p)\text{ and }\tau\in\Q_p.
\]
This induces an action of $\GL_2(\Q_p)$ on $\Z_p$-lattices, which gives rise to an action of $\GL_2(\Q_p)$ on  $\cV$ that preserves distance, thus inducing an action on $\cT$ and on $\cE$.

We can make $\Gamma$ act on $\cT$ by means of the fixed isomorphism
\[
 \iota_p \colon B\otimes\Q_p \lra \M_2(\Q_p).
\]
We denote this action simply as $g(v)$ and $g(e)$, for $g\in \Gamma$ and $v\in\cV$, $e\in\cE$. Strong approximation, 
using the fact that $B$ is unramified at infinity, implies that $\Gamma$ acts transitively on $\cE^+$. A fundamental 
domain (in the sense of~\cite[\S4.1]{serre-trees}) for this action is given by
\[
\xymatrix@R5pt{
v_* &e_*& \hat v_*\\
*=0{\bullet}\ar@{-*=0@{>}}[r]&*=0{}\ar@{-}[r]&*=0{\bullet}
}
\]
Moreover, we have:
\begin{enumerate}
 \item $\operatorname{Stab}_{\Gamma}(v_*)=\GM$, and $\operatorname{Stab}_\Gamma(\hat v_*)= \GMhat:=\omega_p^{-1}\GM\omega_p$.
\item $\operatorname{Stab}_\Gamma(e_*)=\GMp$.
\end{enumerate}
This implies that $\Gamma=\GM \star_{\GMp} \GMhat$, where $\star$ denotes ``amalgamated product''.

In particular, the maps $g\mapsto g^{-1}(e_*)$ and $g\mapsto g^{-1}(v_*)$ induce bijections
\[
 \GMp\backslash\Gamma\stackrel{1:1}{\longleftrightarrow}\cE^+ ,\quad
\GM\backslash\Gamma \stackrel{1:1}{\longleftrightarrow}  \cV^+.
\]
In the following we will fix a convenient system of coset representatives of $\GMp\backslash\Gamma$ indexed by the even edges, and another system of coset representatives of $\GM\backslash\Gamma$ indexed by the even vertices. These were introduced in~\cite[Definition 4.7]{LRV} and called \emph{radial systems}.

Recall $\Upsilon=\{\gamma_0=1,\gamma_1,\dots,\gamma_p\}$ the set of representatives for $\GMp\backslash\GM$
we fixed in \eqref{eq: def of gammas}. Define $\tilde\gamma_0=1$ and, for $i=1,\ldots,p$, define $\tilde\gamma_i=p^{-1}\omega_p\gamma_i\omega_p$.
\begin{lemma}
We have
\[
\GMp\backslash \GMhat = \coprod_{i=0}^p \GMp \tilde \gamma_i.
\]
\end{lemma}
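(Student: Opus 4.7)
The plan is to transport the decomposition $\GM = \coprod_{i=0}^p\GMp\gamma_i$ from \eqref{eq: def of gammas} to $\GMhat$ by conjugating with $\omega_p^{-1}$. Since $\omega_p$ normalizes $\GMp$ and $\GMhat = \omega_p^{-1}\GM\omega_p$ by definition, this immediately yields
\[
\GMhat = \coprod_{i=0}^p \GMp\cdot\omega_p^{-1}\gamma_i\omega_p,
\]
with disjointness preserved because conjugation by $\omega_p^{-1}$ induces a bijection on left $\GMp$-cosets. It therefore suffices to show that each $\omega_p^{-1}\gamma_i\omega_p$ defines the same left $\GMp$-coset as $\tilde\gamma_i$.

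For $i=0$ both elements equal $1$. For $i\geq 1$, a direct manipulation gives
\[
\tilde\gamma_i = p^{-1}\omega_p\gamma_i\omega_p = (p^{-1}\omega_p^2)\cdot(\omega_p^{-1}\gamma_i\omega_p),
\]
so the proof reduces to the key claim that $\delta := p^{-1}\omega_p^2$ lies in $\GMp$.

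To establish this claim, observe that $\delta$ has reduced norm $1$, so one only needs to verify that $\delta\in R_0(pM)$, which is done locally. For every prime $\ell\neq p$, the element $p^{-1}$ is a unit in $\Z_\ell$ and $\omega_p^2\in R_0(pM)$, so $\delta\in R_0(pM)_\ell$ is immediate. At $p$, using the explicit form \eqref{eq: w_p} of $\iota_p(\omega_p)$, together with the observation that $\smtx{0}{-1}{p}{0}$ normalizes $\Gamma_0^{\text{loc}}(p)$, one computes $\iota_p(\omega_p^2) = -p\cdot v$ for some $v\in \Gamma_0^{\text{loc}}(p)$; hence $\iota_p(\delta) = -v\in \Gamma_0^{\text{loc}}(p)\subset R_0(pM)_p$.

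The main obstacle is precisely this local check at $p$. The crucial ingredients are the relation $\smtx{0}{-1}{p}{0}^2 = -pI$, which after dividing by $p$ yields a determinant-one matrix, and the fact that $\smtx{0}{-1}{p}{0}$ normalizes $\Gamma_0^{\text{loc}}(p)$, which absorbs the residual factor of $u'$ appearing in $\iota_p(\omega_p)$. Everything else is formal manipulation of cosets together with the normalizing property of $\omega_p$.
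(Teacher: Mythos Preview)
Your proof is correct and follows exactly the same route as the paper's: conjugate the known coset decomposition of $\GM$ by $\omega_p^{-1}$ to get representatives $\omega_p^{-1}\gamma_i\omega_p$ for $\GMp\backslash\GMhat$, and then show these coincide with the $\tilde\gamma_i$ modulo $\GMp$ via the identity $\tilde\gamma_i = (p^{-1}\omega_p^2)\cdot\omega_p^{-1}\gamma_i\omega_p$ and the fact that $p^{-1}\omega_p^2\in\GMp$. The only difference is that the paper cites this last fact from~\cite[\S3.2]{Gr}, whereas you supply the local verification directly.
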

\begin{proof}
Clearly the set $\{1,\omega_p^{-1}\gamma_1\omega_p,\dots,\omega_p^{-1}\gamma_p\omega_p\}$ is a system of 
representatives for the quotient $\GMp\backslash\GMhat$. The elements $p^{-1}\omega_p\gamma_i\omega_p $ and
$\omega_p^{-1}\gamma_i\omega_p$ belong to the same coset modulo $\GMp$. Indeed, this follows from the identity
\[
 p^{-1}\omega_p\gamma_i\omega_p =p^{-1}\omega_p^{2}\omega_p^{-1}\gamma_i\omega_p
\]
and the fact that  $p^{-1}\omega_p^2\in\GMp$ (see, e.g., \cite[\S3.2]{Gr}).
\end{proof}

\begin{definition}\label{def: radial system}
Define $\{\gamma_e\}_{e\in\cE^+}$ and $\{\gamma_v\}_{v\in\cV}$ to be the systems of representatives respectively for
$\GMp\backslash\Gamma$ and $\GM\backslash\Gamma$ uniquely determined by
the conditions:
\begin{enumerate}
\item\label{item: rs 1} $\gamma_{v_*}=\gamma_{\hat v_*}=1$;
 \item\label{item: rs 2} $\{\gamma_e\}_{s(e)=v}=\{\gamma_i\gamma_v\}_{i=0}^p$ for all $v\in\cV^+$;
\item\label{item: rs 3}  $\{\gamma_e\}_{t(e)=v}=\{\tilde\gamma_i\gamma_v\}_{i=0}^p$ for all $v\in\cV^-$;
\item\label{item: rs 4}  $\gamma_{s(e)} = \gamma_e$ for all $e\in\cE^+$ such that $d(t(e),v_*)< d(s(e),v_*)$;
\item\label{item: rs 5}  $\gamma_{t(e)} = \gamma_e$ for all $e\in\cE^+$ such that $d(t(e),v_*)>d(s(e),v_*)$.
\end{enumerate}
\end{definition}
By construction $\cY=\{ \gamma_e \}_{e\in \cE^+}$ is a radial system. Indeed, by definition a radial system is one
satisfying conditions 
\ref{item: rs 1}, \ref{item: rs 2}, and \ref{item: rs 3} above for some set of representatives for 
$\GMp\backslash\GM$ and $\GMp\backslash\GMhat$. What we have done is to fix a choice of radial system by choosing
$\{\gamma_0,\dots,\gamma_p\}$ and $\{\tilde\gamma_0,\dots,\tilde\gamma_p\}$ as such representatives, and adding 
conditions \ref{item: rs 4} and \ref{item: rs 5} to make the choice unique. Figure~\ref{fig: 
BT} shows the first even edges of $\cT$ labeled with representatives of $\cY$, in the simple case $p=2$.

\begin{figure}
\makeatletter
\tikzset{nomorepostaction/.code={\let\tikz@postactions\pgfutil@empty}}
\makeatother
\begin{tikzpicture}[-,>=triangle 45,shorten >=0pt,auto,node distance=3cm,thick,enode/.style={circle,inner sep = 2pt,fill=blue!30,draw,font=\sffamily\bfseries},onode/.style={circle,inner sep = 3pt,fill=red!70!blue!30,draw,font=\sffamily\bfseries},midar/.style={thick,postaction={nomorepostaction,decorate,decoration={markings,mark=at position 0.5 with {\arrow{>}}}}} ,scale=0.7, every node/.style={scale=1.0}]

  \node[enode,label = 1100:$v_*$] (g0) {$\phantom{\gamma_0}\gamma_0\phantom{\gamma_0}$};
  \node[onode,label = 1-1-1:$\hat v_*$] (g0h) [above of=g0] {$\phantom{a}\tilde \gamma_0\phantom{a}$};
  \node[onode] (g1) [below left of=g0] {$\phantom{a}\gamma_1\phantom{a}$};
  \node[onode] (g2) [below right of=g0] {$\phantom{a}\gamma_2\phantom{a}$};
  \node[enode] (g1h) [above left of=g0h] {$\phantom{a}\tilde\gamma_1\phantom{a}$};
  \node[enode] (g2h) [above right of=g0h] {$\phantom{a}\tilde\gamma_2\phantom{a}$};
  \node[onode] (g1g1h) [left of=g1h] {$\gamma_1\tilde\gamma_1$};
  \node[onode] (g2g1h) [above of=g1h] {$\gamma_2\tilde\gamma_2$};
  \node[onode] (g1g2h) [right of=g2h] {$\gamma_1\tilde\gamma_1$};
  \node[onode] (g2g2h) [above of=g2h] {$\gamma_2\tilde\gamma_2$};
  \node[enode] (g1hg2) [right of=g2] {$\tilde\gamma_1\gamma_2$};
  \node[enode] (g2hg2) [below of=g2] {$\tilde\gamma_2\gamma_2$};
  \node[enode] (g1hg1) [left of=g1] {$\tilde\gamma_1\gamma_1$};
  \node[enode] (g2hg1) [below of=g1] {$\tilde\gamma_2\gamma_1$};
  \node[onode] (g1g2hg1) [below left of=g2hg1] {\tiny$\gamma_1\tilde\gamma_2\gamma_1$};
  \node[onode] (g2g2hg1) [below right of=g2hg1] {\tiny$\gamma_2\tilde\gamma_2\gamma_1$};
  \node[enode] (g1hg2g2h) [above left of=g2g2h] {\tiny$\tilde\gamma_1\gamma_2\tilde\gamma_2$};
  \node[enode] (g2hg2g2h) [above right of=g2g2h] {\tiny$\tilde\gamma_2\gamma_2\tilde\gamma_2$};
  \path[every node/.style={font=\sffamily\bfseries\small}]
      (g0) edge[midar] node [right] {$\gamma_0 = \tilde\gamma_0 = 1$} (g0h)
           edge[midar] node [left] {$e_*$} (g0h)
           edge[midar] node [left] {$\gamma_1$} (g1)
           edge[midar] node [right] {$\gamma_2$}  (g2)
      (g1h) edge[midar] node [right] {$\tilde\gamma_1$} (g0h)
            edge[midar] node [below] {$\gamma_1\tilde\gamma_1$} (g1g1h)
            edge[midar] node [right] {$\gamma_2\tilde\gamma_1$} (g2g1h)
      (g2h) edge[midar] node [left] {$\tilde\gamma_2$} (g0h)
            edge[midar] node [below] {$\gamma_1\tilde\gamma_2$} (g1g2h)
            edge[midar] node [right] {$\gamma_2\tilde\gamma_2$} (g2g2h)
      (g1hg1) edge[midar] node [below] {$\tilde\gamma_1\gamma_1$} (g1)
      (g2hg1) edge[midar] node [right] {$\tilde\gamma_2\gamma_1$} (g1)
      (g1hg2) edge[midar] node [below] {$\tilde\gamma_1\gamma_2$} (g2)
      (g2hg2) edge[midar] node [right] {$\tilde\gamma_2\gamma_2$} (g2)
      (g1hg2g2h) edge[midar] node [left] {$\tilde\gamma_1\gamma_2\tilde\gamma_1$} (g2g2h)
      (g2hg2g2h) edge[midar] node [left] {$\tilde\gamma_2\gamma_2\tilde\gamma_1$} (g2g2h)
      (g2hg1)  edge[midar] node [left] {$\gamma_1\tilde\gamma_2\gamma_1$} (g1g2hg1)
      (g2hg1)  edge[midar] node [right] {$\gamma_2\tilde\gamma_2\gamma_1$} (g2g2hg1);
\end{tikzpicture}

\caption{Vertices and edges of the Bruhat-Tits tree labeled using the radial system ($p=2$). 
Blue vertices are even and
red ones are odd, and only the even edges are shown.}\label{fig: BT}
\end{figure}
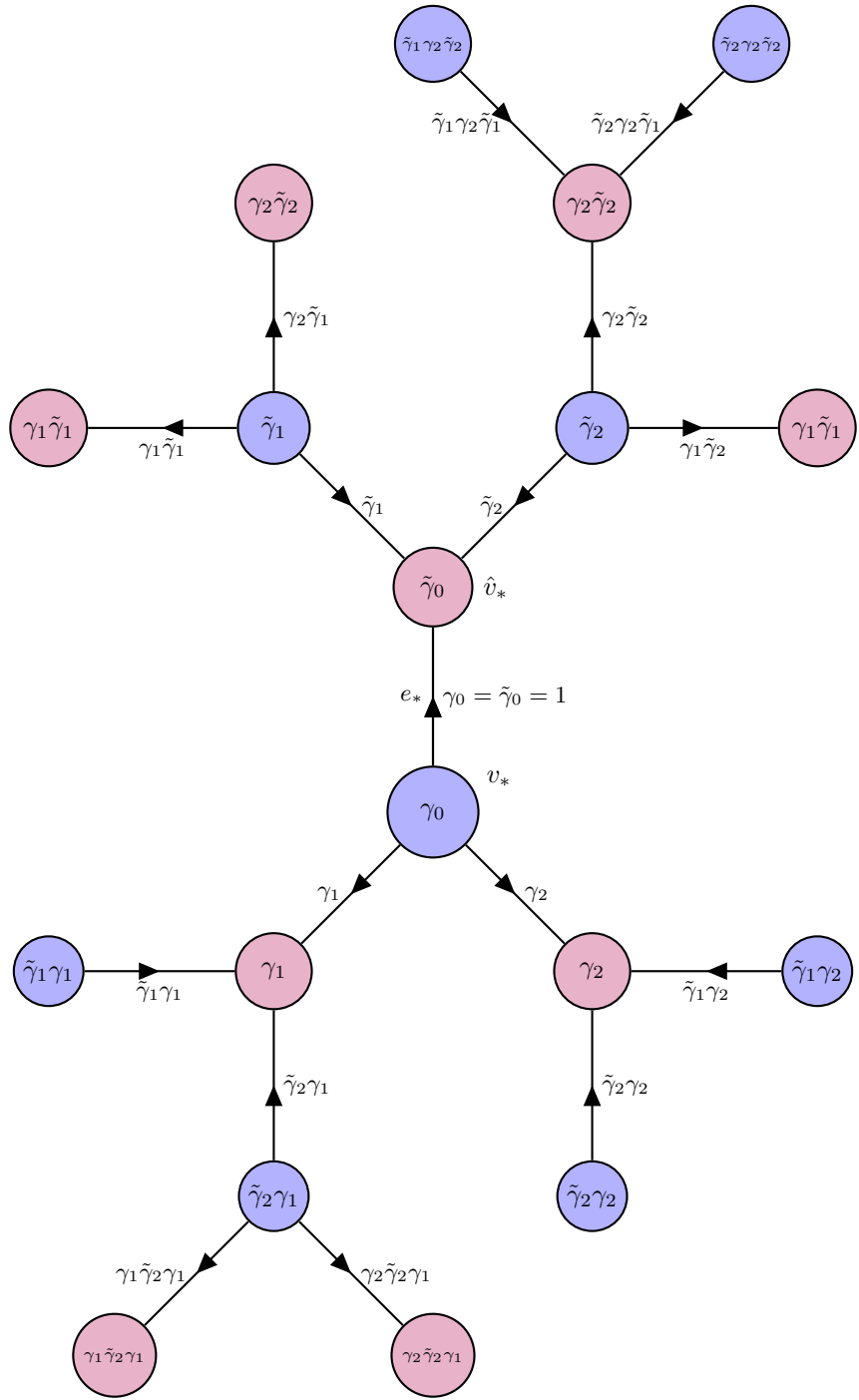

\subsection{Measures on $\P^1(\Q_p)$} 

Let $\cB(\PP^1(\Q_p))$ be the set of compact-open balls in $\PP^1(\Q_p)$, which forms a basis for the topology of
$\PP^1(\Q_p)$. There is a $\GL_2(\QQ_p)$-equivariant bijection \[ \begin{array}{ccc}\cE &\stackrel{\cong}{\lra} 
& \cB(\PP^1(\QQ_p)) \\ e & \longmapsto & U_e \end{array}\] sending $e_*$ to $\ZZ_p$. Therefore, if $\gamma(e)=e_*$ then
$U_e = \gamma^{-1}\ZZ_p$; in particular $U_e=\gamma_e^{-1} \Z_p$. Under this bijection an open ball $U_{e}$ is contained
in $U_{e'}$ if and only if there is a path (directed and without backtracking) in $\cT$ having initial edge $e$ and
final edge $e'$.

The following basic lemma will be useful in Section \ref{sec: overconvergent}. We denote by $|U|$ the
diameter of an open ball $U\in\cB(\PP^1(\QQ_p))$.
\begin{lemma}
 Let $g_i=s_i^{-1} = \omega_p\gamma_i$. For each $r\geq 0$ denote by $\cB(\ZZ_p,p^{-r})$ the set of open balls $U\subseteq \ZZ_p$ of
diameter $p^{-r}$. Then for all $r\geq 0$:
\begin{align}\label{eq: balls of radius p**r}
\cB(\ZZ_p,p^{-r}) = \{(g_{i_1}\cdots g_{i_r})^{-1}\ZZ_p~|~  1\leq i_k\leq p\}.
\end{align}
\end{lemma}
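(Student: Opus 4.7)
The plan is to translate the geometric statement into explicit Möbius action on $\Z_p$, using the normal forms \eqref{eq: w_p}, \eqref{eq: def of s_i}, and \eqref{eq: local s_i}, and then to induct on $r$.

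First I would verify the base case $r=1$ by computing $g_i^{-1}\Z_p = s_i\Z_p$ directly. Since $\iota_p(s_i) = \smtx{p}{-i}{0}{1}u_i'$ with $u_i' \in \Gamma_0^{\text{loc}}(p)$, and since matrices in $\Gamma_0^{\text{loc}}(p) \subset \SL_2(\Z_p)$ preserve the compact-open subset $\Z_p \subset \P^1(\Q_p)$ (the only element of $\P^1(\Z_p)$ they can move off $\Z_p$ is $\infty$, and its image $a/c$ has negative $p$-adic valuation whenever $c \neq 0$), we get $s_i\Z_p = \smtx{p}{-i}{0}{1}\Z_p = -i + p\Z_p$. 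As $i$ runs over $\{1,\dots,p\}$, the residues $-i \bmod p$ run over the full set $\{0,1,\dots,p-1\}$, so the collection $\{s_i\Z_p\}_{i=1}^p$ exhausts $\cB(\Z_p,p^{-1})$.

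Next I would carry out the induction step. The key observation is that each $s_i$ acts on $\Z_p$ as the affine map $z \mapsto pz - i$ (after absorbing the $\SL_2(\Z_p)$-factor), and therefore contracts the $p$-adic metric by exactly the factor $p^{-1}$. Consequently the composition $s_{i_1}\cdots s_{i_r}$ sends $\Z_p$ bijectively onto a ball of diameter $p^{-r}$ contained in $\Z_p$, which shows the right-hand side of \eqref{eq: balls of radius p**r} is contained in $\cB(\Z_p,p^{-r})$.

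For the reverse inclusion, let $U \in \cB(\Z_p,p^{-r})$. Then $U$ lies in a unique ball $U'$ of diameter $p^{-(r-1)}$, and by the induction hypothesis $U' = s_{i_1}\cdots s_{i_{r-1}}\Z_p$ for some indices $i_1,\dots,i_{r-1}$. Because $s_{i_1}\cdots s_{i_{r-1}}$ is a homeomorphism $\Z_p \to U'$ scaling diameters by $p^{-(r-1)}$, it pulls $U$ back to a ball $V \subseteq \Z_p$ of diameter $p^{-1}$, and the base case gives $V = s_{i_r}\Z_p$ for a unique $i_r$. Hence $U = s_{i_1}\cdots s_{i_r}\Z_p = (g_{i_1}\cdots g_{i_r})^{-1}\Z_p$, completing the induction.

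The only subtle point I anticipate is the verification that multiplying by the factors $u_i' \in \Gamma_0^{\text{loc}}(p)$ on the right does not affect $s_i\Z_p$; this is really the content of the stabilization of $\Z_p$ by $\Gamma_0^{\text{loc}}(p)$. Everything else is a routine transcription of the contraction property of the matrix $\smtx{p}{-i}{0}{1}$.
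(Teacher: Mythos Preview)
Your proof is correct and follows essentially the same approach as the paper's: both argue by induction on $r$, with the key input being the local form \eqref{eq: local s_i} of the $s_i$, which shows that each $g_i^{-1}\Z_p = s_i\Z_p$ is one of the $p$ balls of diameter $p^{-1}$ partitioning $\Z_p$. The only cosmetic difference is in the organization of the induction step: the paper establishes the inclusion $\supseteq$ and then uses a counting argument (the set $\cB(\Z_p,p^{-r})$ has exactly $p^r$ elements, so it suffices to show the $p^r$ tuples yield distinct balls), whereas you prove the reverse inclusion directly by pulling a given ball back through the similarity $s_{i_1}\cdots s_{i_{r-1}}$.
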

\begin{proof}
We do induction on $r$, and note that the case of $r=0$ is trivial since both sets consist of only one open, 
namely $\ZZ_p$.

Note that $g_i^{-1}\ZZ_p\subset \ZZ_p$, and actually
\[
\ZZ_p=\coprod_{i=1}^p g_i^{-1}\ZZ_p.
\]
This follows from the local form of the $g_i$ as in \eqref{eq: local s_i}: for $i\geq $1,
$\iota_p(g_i^{-1})=\smtx{p}{-i}{0}{1}u_i$ with the $u_i\in\Gamma_0^{\text{loc}}(p)$.
Therefore, we obtain the inclusion $\supseteq$ in \eqref{eq: balls of radius p**r}. The set $\cB(\ZZ_p,p^{-r})$ has size
$p^r$, so it only remains to show that:
\[
(i_1,\ldots,i_r)\neq (j_1,\ldots,j_r)\implies (g_{i_1}\cdots g_{i_r})^{-1}\ZZ_p \neq (g_{j_1}\cdots g_{j_r})^{-1}\ZZ_p.
\]
Again, the previous decomposition of $\ZZ_p$ and the induction hypothesis prove the above claim.
\end{proof}
From the above lemma we deduce:
\begin{corollary}\label{cor: opens in Z_p}
\begin{enumerate}
\item An open ball $U_e$ corresponding to an even edge $e$ is contained in $\Z_p$ if and only if  $\gamma_e$ is of the form
\[
\gamma_e=\tilde\gamma_{i_1}\gamma_{j_1}\cdots
\tilde\gamma_{i_n}\gamma_{j_n} \text{, with all $i_k,j_k\in\{1,\ldots,p\}$ and some $n\geq 0$.}
\]
\item An open ball $U_{\ol e}$ corresponding to the opposite of an even edge $e$ is contained in $\Z_p$ if and only if $\gamma_e$ is of the form
\[
\gamma_e=\gamma_{j_1}\tilde\gamma_{i_2}\gamma_{j_2}\cdots
\tilde\gamma_{i_n}\gamma_{j_n}
\]
with all $i_k,j_k\in\{1,\ldots,p\}$, and some $n\geq 0$.
\end{enumerate}
\end{corollary}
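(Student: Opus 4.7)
The plan is to reduce both parts to the preceding lemma via the algebraic identity $g_ig_j=p\tilde\gamma_i\gamma_j$, valid for $i,j\in\{1,\ldots,p\}$, which follows immediately from $g_i=\omega_p\gamma_i$ together with $\tilde\gamma_i=p^{-1}\omega_p\gamma_i\omega_p$. Since the scalar $p^n$ acts trivially on $\P^1(\Q_p)$, the word $g_{i_1}g_{j_1}\cdots g_{i_n}g_{j_n}$ and the alternating product $\tilde\gamma_{i_1}\gamma_{j_1}\cdots\tilde\gamma_{i_n}\gamma_{j_n}$ induce the same map on balls. The ``if'' direction of part (1) then reduces to the lemma: $U_e=\gamma_e^{-1}\Z_p=(g_{i_1}\cdots g_{j_n})^{-1}\Z_p\in\cB(\Z_p,p^{-2n})$. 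For the ``if'' direction of part (2), I would first use $\omega_p\Z_p=\P^1(\Q_p)\setminus\Z_p$ to write $U_{\bar e}=(\omega_p^{-1}\gamma_e)^{-1}\Z_p$, and then apply the identity together with the fact that $p^{-1}\omega_p^2\in\GMp$ stabilizes $\Z_p$; the upshot is $U_{\bar e}=(g_{j_1}g_{i_2}\cdots g_{j_n})^{-1}\Z_p\in\cB(\Z_p,p^{-(2n-1)})$.

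For the ``only if'' direction I would proceed by induction on the distance of $e$ to $e_*$, unwinding the radial system conditions (2)--(5) along the unique geodesic from $v_*$. The argument builds the representative $\gamma_v$ of each even vertex $v$ on the $\Z_p$-side of $v_*$ inductively: the outward step from $v_{k-1}$ to a new odd vertex $v'_k$ contributes a factor $\gamma_{i_k}$ on the left (by (2) combined with (5)), and the subsequent inward step back from the next even vertex $v_k$ contributes a factor $\tilde\gamma_{j_k}$ (by (3) combined with (4)). The hypothesis $U_e\subset\Z_p$ (respectively $U_{\bar e}\subset\Z_p$) forces the geodesic to avoid $\hat v_*$ at its first step, so $i_1\neq 0$; the ``no backtracking'' condition then forces all further indices to be nonzero, because in each of the coset systems $\{\gamma_i\}$ and $\{\tilde\gamma_j\}$ the index $0$ corresponds precisely to the edge returning to the previously-visited vertex. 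An inward even edge at the furthest vertex then gives $\gamma_e=\gamma_v$ by (4), yielding the even-length form of part (1); an outward even edge gives $\gamma_e=\gamma_k\gamma_v$ with $k\neq 0$, yielding the odd-length form of part (2).

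The main subtle point will be checking that the index $0$ does encode the inward direction in the radial system at each step; once this is observed, the identification can alternatively be completed by a counting check, since the lemma provides exactly $p^{2n}$ balls in $\cB(\Z_p,p^{-2n})$ and $p^{2n-1}$ balls in $\cB(\Z_p,p^{-(2n-1)})$, matching the number of tuples that the ``if'' direction produces.
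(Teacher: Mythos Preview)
Your proposal is correct and follows essentially the same route as the paper. The paper's proof consists of exactly your key identity $\tilde\gamma_{i_k}\gamma_{j_k}=p^{-1}g_{i_k}g_{j_k}$ together with the observation that the even edges whose ball lies in $\Z_p$ are precisely those of diameter $p^{-2n}$, after which the lemma finishes; the paper then declares part (2) ``similar''. Your counting remark in the last paragraph is thus the argument the paper actually gives, while your inductive unwinding of the radial system in the second paragraph supplies the detail the paper leaves implicit (namely, that each alternating word $\tilde\gamma_{i_1}\gamma_{j_1}\cdots$ really is the element $\gamma_e\in\cY$ and not merely a coset representative for it). Your treatment of part (2) via $U_{\bar e}=(\omega_p^{-1}\gamma_e)^{-1}\Z_p$ and $p^{-1}\omega_p^2\in\GMp$ is more explicit than the paper's, but equivalent.
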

\begin{proof}
  Note that $\tilde\gamma_{i_k}\gamma_{j_k} = p^{-1}g_{i_k}g_{j_k}$, so
\[
(\tilde\gamma_{i_k}\gamma_{j_k})^{-1}\ZZ_p = (g_{i_k}g_{j_k})^{-1}\ZZ_p.
\]
Now the first claim follows from the fact that even edges correspond to balls of diameter $p^{-2n}$ for some $n\geq 0$
and
the lemma. The second claim is similar.
\end{proof}

Let $\meas_0(\P^1(\Q_p),\Z)$ denote the set of
$\Z$-valued measures on $\P^1(\Q_p)$ of total measure $0$. It acquires the structure of left
$\GL_2(\Q_p)$-module as follows: for $m\in \meas_0(\P^1(\Q_p),\Z)$ and ${g\in\GL_2(\Q_p)}$
\[
 (g m)(U)=m(g^{-1}U) \ \text{ for all compact-open } U.
\]

Let $\cF(\cE,\Z)$ denote the set of functions from $\cE$ to $\Z$ and let
\[
 \cF_0(\cE,\Z)=\{ c\in\cF(\cE,\Z) \colon c(e)=-c(\bar e) \text{ for all } e\in\cE\}.
\]

A \emph{$\Z$-valued harmonic cocycle} is a function $c\in\cF_0(\cE,\Z)$ such that
\[
 \sum_{s(e)=v}c(e)=0\ \text{ for all } v\in \cV.
\]
The bijection $\cE\leftrightarrow \cB(\PP^1(\Q_p))$ induces an identification between $\meas_0(\P^1(\Q_p),\Z)$ and $\hc(\Z)$.

\begin{remark}
  The module $\cF_{\text{har}}(\Z)$ also appears in the theory of modular forms. Indeed, the Jacquet--Langlands correspondence and the theory of Cerednik--Drinfeld relate harmonic cocycles that are invariant with respect to arithmetic subgroups of \emph{definite} quaternion algebras of discriminant $D$ to $pD$-new modular forms (see, e.g., \cite[\S5]{darmon-book}). However, in this work we only consider \emph{indefinite} quaternion algebras. In this case, the corresponding invariant harmonic cocycles are trivial, and one needs to look at higher cohomology groups (cf. \S\ref{subsection: Greenberg construction} below), hence deviating from the more classical theory.
\end{remark}

\section{Quaternionic $p$-adic Darmon points}\label{subsection: Greenberg construction}
This section is devoted to reviewing Greenberg's construction of quaternionic $p$-adic Darmon points \cite{Gr} in the case
of
elliptic curves over $\Q$. Recall that in this setting $E$ is an elliptic curve over $\Q$ of
conductor $N=pDM$, and $K$ a real quadratic field in which all primes dividing $pD$ are inert and all primes dividing
$M$ are
split.

The method attaches to any embedding of $\Z[1/p]$-algebras $\psi: \cO_K\hookrightarrow R$ a Darmon point
$P_\psi\in E(K_p)$, which is the image under Tate's uniformization map of a certain
quantity $J_\psi\in K_p^\times$. The construction of $J_\psi$ can be divided into three stages: 
\begin{enumerate}
 \item Construct a $1$-cohomology 
class $[\tilde\mu]=[\tilde\mu_E]\in H^1(\Gamma,\meas_0(\P^1(\Q_p),\Z))$ associated to 
$E$;
\item Construct a $1$-homology class
$[c_\psi]\in H_1(\Gamma,\Div^0(\cH_p))$ associated to $\psi$; and
\item Set $J_\psi=\Xint\times \langle
[c_\psi],[\tilde\mu] \rangle$, where $\Xint\times\langle\ ,\ \rangle$ is a certain  ``integration pairing'' .
\end{enumerate}
We describe each one of the steps separately.

\subsection{The cohomology class attached to $E$}\label{subsection: the cohomology class} Recall the two characters
$\lambda_E^\pm$ of the Hecke algebra associated to $E$ in \eqref{eq: def of lambda}. Choose a sign $\sigma\in\{\pm\}$ 
and consider the character $\lambda = \lambda_E^\sigma$. If we denote by $H^1(\GMp,\Z)_{p\text{-new}}$ the $p$-new 
subspace
(see, e.g. \cite[\S3]{Gr} for the definition), then the submodule $\left(H^1(\GMp,\Z)_{p\text{-new}}\right)^\lambda$  is
free of rank $1$. In fact, the coboundary group
$B^1(\GMp,\Z)$ is
trivial (for $\GMp$ acts trivially on $\Z$), so there exists a cocycle $\varphi=\varphi_E\in 
Z^1(\GMp,\Z)_{p\text{-new}}$ such that: 
\begin{enumerate}
 \item $\opT_\ell  \varphi=a_\ell \varphi$ for all primes $\ell\nmid pMD$, \item $\opU_\ell  \varphi=a_\ell \varphi$
for all $\ell\mid pM$,\item $\opW_\infty
\varphi=\sigma \varphi$, and
 \item the image of $\varphi$ is not contained in any proper ideal of $\Z$.
\end{enumerate}
The cocycle $\varphi$ is uniquely determined, up to sign, by these conditions and therefore me may and do fix such a cocycle $\varphi$. The following theorem can be seen as a generalization
of \cite[Proposition 1.3]{darmon-pollack} to the case where $B$ is a
division algebra.
 \begin{theorem}[Greenberg \cite{Gr}]\label{th: greenberg} There exists $\tilde \mu\in Z^1(\Gamma,\meas_0(\P^1(\Q_p,\Z)))$ whose cohomology class $[\tilde\mu]\in H^1(\Gamma,\meas_0(\P^1(\Q_p),\Z))$ 
satisfies:
\begin{enumerate}
 \item $\opT_\ell  [\tilde\mu]=a_\ell [\tilde\mu]$ for all primes $\ell\nmid pM$; \item $\opU_\ell [\tilde\mu]=a_\ell 
[\tilde\mu]$
for all $\ell\mid M$, \item $\opW_\infty
 [\tilde\mu]=\sigma [\tilde\mu]$, and
 \item\label{eq: property of m} $\tilde\mu_{\gamma}(\Z_p)=\varphi_\gamma$ for all $\gamma\in \GMp$.
\end{enumerate}
In addition,  $[\tilde\mu]$ is uniquely determined by this conditions.
\end{theorem}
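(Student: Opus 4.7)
The plan is to realize $[\tilde\mu]$ as the preimage of $[\varphi]$ under a Hecke-equivariant isomorphism obtained from the Bruhat--Tits tree and the amalgamated product decomposition $\Gamma=\GM\star_{\GMp}\GMhat$.

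\textbf{Step 1 (Set up the short exact sequence of coefficients).} Using the identification $\meas_0(\PP^1(\Q_p),\Z)\cong\hc(\Z)$ given earlier, consider the short exact sequence of $\Gamma$-modules
\[
0 \lra \hc(\Z) \lra \cF_0(\cE,\Z) \xrightarrow{\ \partial\ } \cF(\cV,\Z) \lra 0,
\]
where $\partial c(v)=\sum_{s(e)=v} c(e)$. The transitivity of the $\Gamma$-action on $\cE^+$ (with stabilizer $\GMp$) and on $\cV^+$ (with stabilizer $\GM$), together with the fact that $\GMhat=\omega_p^{-1}\GM\omega_p$ stabilizes $\hat v_*$, yield $\Gamma$-module identifications
\[
\cF_0(\cE,\Z) \;\simeq\; \coind_{\GMp}^{\Gamma}\Z, \qquad
\cF(\cV,\Z)\;\simeq\; \bigl(\coind_{\GM}^{\Gamma}\Z\bigr)\,\oplus\,\bigl(\coind_{\GMhat}^{\Gamma}\Z\bigr).
\]

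\textbf{Step 2 (Long exact sequence and Shapiro).} Taking $\Gamma$-cohomology of the above short exact sequence and applying Shapiro's lemma produces
\[
H^0(\GMp,\Z) \to H^0(\GM,\Z)\oplus H^0(\GMhat,\Z) \to H^1(\Gamma,\hc(\Z)) \xrightarrow{\delta} H^1(\GMp,\Z) \xrightarrow{\rho} H^1(\GM,\Z)\oplus H^1(\GMhat,\Z).
\]
The map $\rho$ is (up to the $W_p$-twist built into $\GMhat$) the pair of restriction maps whose kernel is, by definition, the $p$-new subspace $H^1(\GMp,\Z)_{p\text{-new}}$. The map $\delta$ is the connecting homomorphism; tracing through the Shapiro isomorphism using the chosen base edge $e_*$ (whose corresponding open is $\Z_p$), one checks that $\delta([\tilde\mu])$ is represented by the cocycle $\gamma\mapsto \tilde\mu_\gamma(\Z_p)$ on $\GMp$. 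In particular $\delta$ is Hecke-equivariant for $\opT_\ell$ ($\ell\nmid pM$), $\opU_\ell$ ($\ell\mid M$) and $\opW_\infty$, because the double coset decompositions used in \eqref{eq: def of T_ell} can be chosen compatibly on edges and on $\GMp$.

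\textbf{Step 3 (Pass to the $\lambda$-isotypic part).} The left-hand term of the long exact sequence arises from constants and is Eisenstein: every $\opT_\ell$ (resp.\ $\opU_\ell$) acts on $H^0(\GM,\Z)\oplus H^0(\GMhat,\Z)/\mathrm{image}$ via the degree character, i.e.\ by $\ell+1$ (resp.\ $\ell$). Since $\lambda=\lambda_E^\sigma$ does not coincide with the degree character (because $a_\ell\neq \ell+1$ for any $\ell\nmid pN$ by the Hasse bound, or directly because $E$ is not Eisenstein), localising at $\lambda$ kills this contribution. Similarly, the cokernel of $\rho$ vanishes on the $\lambda$-isotypic component after localisation, because the image of $\rho$ lies in the old/Eisenstein part. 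Consequently $\delta$ induces a Hecke-equivariant isomorphism
\[
H^1(\Gamma,\hc(\Z))^{\lambda} \;\xrightarrow{\ \sim\ }\; \bigl(H^1(\GMp,\Z)_{p\text{-new}}\bigr)^{\lambda}.
\]
Define $[\tilde\mu]$ to be the unique preimage of $[\varphi]$. By Hecke-equivariance it satisfies conditions (1)--(3), and by the explicit form of $\delta$ described above it satisfies condition (4) on cohomology classes. Adjusting $\tilde\mu$ within its cohomology class by a coboundary (using that $\meas_0(\PP^1(\Q_p),\Z)^{\GMp}$ surjects onto $\Z$ via evaluation at $\Z_p$), we can arrange (4) on the nose at the cocycle level.

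\textbf{Step 4 (Uniqueness).} The isomorphism in Step 3 shows uniqueness of $[\tilde\mu]$ as a cohomology class, since $[\varphi]$ was uniquely determined (up to the fixed sign) by its Hecke eigenvalues in $H^1(\GMp,\Z)_{p\text{-new}}^{\lambda}$. The main obstacle I expect is \textbf{Step 3}: carefully verifying that the $\lambda$-localisation really annihilates both the Eisenstein cokernel on the left and the old/Eisenstein image of $\rho$ on the right, and pinning down the $W_p$-twist hidden in the identification of $\cF(\cV,\Z)$ as a coinduced module so that the kernel of $\rho$ matches Greenberg's $p$-new subspace exactly; the Hecke-equivariant identification of $\delta$ with evaluation-at-$\Z_p$ is also the step most sensitive to the choices in Definition~\ref{def: radial system}.
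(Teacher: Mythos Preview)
The paper does not prove this theorem; it is stated as a result of Greenberg and cited from~\cite{Gr}. What the paper \emph{does} do, immediately afterwards, is give an explicit construction of a representative cocycle via the radial system $\cY$ (formula~\eqref{eq: definition of mu}) and then, in Proposition~\ref{prop: proposition about m and mu}, show that applying $t_r=T_r-(r+1)$ to this explicit $[\mu]$ yields a multiple of $[\tilde\mu]$. In other words, the paper trades the abstract existence argument for a hands-on construction suited to computation.

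Your outline is essentially Greenberg's original cohomological argument, and it is correct in substance. A few small points are worth tightening. First, the map you call $\delta$ is not the connecting homomorphism of the long exact sequence: it is the map induced by the inclusion $\hc(\Z)\hookrightarrow\cF_0(\cE,\Z)$ followed by Shapiro, and your computation that it sends $[\tilde\mu]$ to $\gamma\mapsto\tilde\mu_\gamma(\Z_p)$ is the correct description of \emph{that} map. Second, the sentence about ``the cokernel of $\rho$ vanishing'' is not what you need: by exactness, $\mathrm{image}(\delta)=\ker(\rho)$, and you only require $[\varphi]\in\ker(\rho)$, which is exactly the $p$-new hypothesis. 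Third, the coboundary adjustment in Step~3 is unnecessary: since $\GMp$ acts trivially on $\Z$, one has $B^1(\GMp,\Z)=0$, so once $\delta([\tilde\mu])=[\varphi]$ holds in cohomology it already holds on the nose as an equality of homomorphisms $\GMp\to\Z$, giving~(4) directly. Finally, the honest caveat you flag in Step~3 is exactly the delicate point: over $\Q$ the $\lambda$-localisation cleanly splits off the degree/Eisenstein piece $\ker(\delta)$ and yields the isomorphism; over $\Z$ one must check that a preimage of $[\varphi]$ can be chosen inside the $\lambda$-eigenspace without denominators, and this is where Greenberg's argument (and the paper's explicit construction via the radial system together with $t_r$) does a bit more work than your sketch records.
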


One can think of the cocycle $\tilde\mu$ as a ``system of measures'': for any
$\gamma\in\Gamma$ there is an associated measure $\tilde\mu_\gamma$. A cocycle $\tilde\mu$ as in the above theorem can be explicitly constructed by
applying the methods of \cite[\S4.2]{LRV} as follows. First of all we need to define a related cocycle $\mu=\mu_E$, 
which
 will actually play an important role in our explicit algorithms. Given $e\in\cE^+$ and $g\in \Gamma$ let
$h(g,e)$ be the 
element
of $\GMp$ defined by the equation 
\begin{align}\label{eq: definition of h}
\gamma_eg=h(g,e)\gamma_{g^{-1}(e)}.
\end{align}
Recall $\cY=\{\gamma_e\}_{e\in\cE^+}$ the radial system fixed in Definition \ref{def: radial system}. For
$g\in\Gamma$ let $\mu_g\in\cF(\cE_0,\Z)$ be the function defined by
\begin{align}\label{eq: definition of mu}
 \mu_g(e)=\varphi_{h(g,e)}, \ \text{if $e\in \cE^+$}.
\end{align}
This condition already determines the values of $\mu_g(e)$ for $e\in\cE^-$, for if $\mu_g$ belongs 
to $\cF_0(\cE,\Z)$ then $\mu_g(e)=-\mu_g(\bar e)$ and $\bar e \in\cE^+$. The map $g\mapsto \mu_g$
defined this way turns out to be a $1$-cocycle. 

Fix  a prime $r$ not dividing $N$ and set $t_r=(T_r-r-1)\in\mathbb{T}^{(pD)}$. The following
proposition, which essentially restates results of \cite{Gr} and \cite{LRV}, claims that $[\tilde \mu]$ can be
computed from $t_r[\mu]$. 
\begin{proposition}\label{prop: proposition about m and mu}
 The cocycle $\mu$ belongs to $Z^1(\Gamma,\meas_0(\PP^1(\Q_p),\Z))$, and $t_r[\mu]$ is a multiple of the cohomology class $[\tilde\mu]$ 
given by 
Theorem \ref{th: greenberg}.
\end{proposition}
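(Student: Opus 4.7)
The plan is to verify the two assertions separately, leaning on the construction of \cite[\S4.2]{LRV} for the bulk of the argument.

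First, I would check that $\mu$ defined by \eqref{eq: definition of mu} is a 1-cocycle in $Z^1(\Gamma,\cF_0(\cE,\Z))$. The cocycle identity reduces to $h(gg',e)=h(g,e)\,h(g',g^{-1}(e))$ in $\GMp$, which follows directly by comparing the two ways of decomposing $\gamma_e(gg')$ via \eqref{eq: definition of h}: on one hand $\gamma_e(gg')=h(gg',e)\gamma_{(gg')^{-1}(e)}$, and on the other hand $\gamma_e(gg')=h(g,e)\gamma_{g^{-1}(e)}g'=h(g,e)\,h(g',g^{-1}(e))\gamma_{(gg')^{-1}(e)}$. Because $\GMp$ acts trivially on $\Z$, the cocycle $\varphi$ is actually a homomorphism $\varphi_{\alpha\beta}=\varphi_\alpha+\varphi_\beta$, and applying $\varphi$ to the identity above yields $\mu_{gg'}(e)=\mu_g(e)+\mu_{g'}(g^{-1}(e))=\mu_g(e)+(g\cdot\mu_{g'})(e)$, as required. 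The extension $\mu_g(\bar e)=-\mu_g(e)$ then places $\mu_g$ in $\cF_0(\cE,\Z)$.

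Second, I would show that $\mu_g$ is harmonic, so that $\mu_g\in\hc(\Z)\cong\meas_0(\P^1(\Q_p),\Z)$. At an even vertex $v\in\cV^+$ the edges emanating from $v$ are indexed by $\Upsilon$ via condition (2) of Definition~\ref{def: radial system}, so the sum $\sum_{s(e)=v}\mu_g(e)$ can be rewritten in terms of the $(p+1)$ values $\varphi_{h(g,\gamma_i\gamma_v\cdot e_*)}$; at an odd vertex one uses condition (3) and the $\tilde\gamma_i$. The vanishing of these sums is exactly the restatement, via the radial system, of the $p$-new condition on $\varphi$, and this is precisely the content of the $\varphi\mapsto\mu$ correspondence in \cite[\S4.2]{LRV}.

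Third, to identify $t_r[\mu]$ I would check that $[\mu]$ is already a Hecke eigenclass for the operators away from $p$. By the final Remark of \S\ref{subsection: Background}, for $\ell\nmid pM$ one may choose the same coset representatives $g_i$ to define $T_\ell$ on both $H^1(\GMp,\Z)$ and $H^1(\Gamma,\hc(\Z))$, so the eigenvalue equations for $\varphi$ transport verbatim to $[\mu]$; the same is true for $U_\ell$ with $\ell\mid M$ and for $W_\infty$, after checking compatibility with the radial system. Consequently $[\mu]$ lies in the $\lambda_E^\sigma$-isotypic component of $H^1(\Gamma,\meas_0(\P^1(\Q_p),\Z))$ up to any contribution on which $T_r$ acts by the degree character $r+1$; applying $t_r=T_r-r-1$ annihilates that contribution and lands in the one-dimensional eigenspace containing $[\tilde\mu]$, so by the uniqueness clause of Theorem~\ref{th: greenberg} we conclude that $t_r[\mu]$ is a scalar multiple of $[\tilde\mu]$. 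The main obstacle is the harmonicity step: one must sum $\varphi_{h(g,e)}$ over the $(p+1)$ edges at each vertex and translate the resulting identity into the $p$-new condition, which requires a careful bookkeeping of the radial system at both even and odd vertices.
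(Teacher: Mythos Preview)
Your outline follows essentially the same route as the paper: verify that $\mu$ is a cocycle, invoke \cite[\S4]{LRV} for harmonicity, transport the Hecke eigenproperties of $\varphi$ to $[\mu]$, and use $t_r$ to kill any degree contribution. Your direct verification of the cocycle identity via $h(gg',e)=h(g,e)h(g',g^{-1}(e))$ is correct and more explicit than the paper, which simply observes (Remark~\ref{rk: its shapiro}) that the assignment $\varphi\mapsto[\mu]$ is the Shapiro map.

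There is, however, a genuine imprecision in your third step. You assert that ``the eigenvalue equations for $\varphi$ transport verbatim to $[\mu]$'' and then, in the same breath, that $[\mu]$ lies in the $\lambda$-isotypic component only ``up to any contribution on which $T_r$ acts by the degree character''. These two statements are in tension: if the eigenvalue equations really held verbatim in $H^1(\Gamma,\hc(\Z))$, there would be no degree contribution to kill. The paper resolves this cleanly by distinguishing two coefficient modules. Shapiro's Lemma together with its Hecke compatibility \cite[Lemma~1.1.4]{ash-stevens} gives $[\mu]\in H^1(\Gamma,\cF_0(\cE,\Z))^\lambda$, i.e.\ the eigenproperty holds in the \emph{larger} coefficient module $\cF_0(\cE,\Z)$. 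But $[\mu]$ actually lives in $H^1(\Gamma,\hc(\Z))$, and the natural map
\[
\rho\colon \Q\otimes H^1(\Gamma,\hc(\Z))\longrightarrow \Q\otimes H^1(\Gamma,\cF_0(\cE,\Z))_{p\text{-new}}
\]
is not injective: its kernel is precisely the degree eigenspace $H^1(\Gamma,\hc(\Z))^{\deg}$ (this is \cite[\S8]{Gr}). That is the source of the degree ambiguity, and it is what makes the application of $t_r$ necessary rather than cosmetic. Your sketch does not identify this map or its kernel, so the phrase ``up to degree contribution'' is left unjustified. Once you insert this observation, your argument becomes complete and coincides with the paper's.
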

\begin{proof} Recall the identification $\meas_0(\P^1(\Q_p),\Z)$ with $\hc(\Z)$. First of all, since $\varphi$ belongs to $\left(H^1(\GMp,\Z)_{p\text{-new}}\right)^\lambda$ by Remark
\ref{rk: its shapiro} and the fact that the isomorphism of Shapiro's Lemma commutes with the Hecke action \cite[Lemma
1.1.4]{ash-stevens} we see that $[\mu]\in H^1(\Gamma,\cF_0(\cE,\Z))^\lambda$.
 The system $\cY$ used to define $\mu$ is radial, and by
 \cite[Proposition 4.8]{LRV} this implies that $\mu_g$ belongs to $\hc(\Z)$ for all $g\in\Gamma$. In particular
$[\mu]$ can be viewed as an element of $H^1(\Gamma,\hc(\Z))$.  The natural map 
\[
 \rho:  \Q\otimes H^1(\Gamma,\hc(\Z))\lra  \Q\otimes H^1(\Gamma,\cF_0(\cE,\Z))_{p\text{-new}}
\]
is surjective but not injective: its  kernel is $H^1(\Gamma,\hc(\Z))^{\deg}$ (see \cite[\S8]{Gr}). Since $\lambda$ arises from a cuspidal eigenform,
$\lambda(\opT_r)$ is not $r+1=\deg(T_r)$ and thus $\opT_r-r-1$ projects to the complementary of 
${\Q\otimes H^1(\Gamma,\hc(\Z))^{\deg}}$, and that
it acts as multiplication by $a_r-r-1$ on $\Q\otimes H^1(\Gamma,\hc(\Z))_{p\text{-new}}^\lambda$.
\end{proof}
In view of this result there exists an integer $c_r$ such that $t_r[\mu]=c_r[\tilde\mu]$. We abuse the notation to
denote $c_r^{-1}t_r$ simply as $t_r$, so that we have an equality $[\tilde\mu]=t_r[\mu]$.

\begin{remark}\label{rk: its shapiro}
 In fact, the cohomology class of $[\tilde\mu] \in H^1(\Gamma,\cF_0(\cE,\Z))$ is nothing but the image of $\varphi$ 
under the
isomorphisms \[H^1(\GMp,\Z)\simeq
H^1(\Gamma,\operatorname{coind}_{\GMp}^\Gamma(\Z))\simeq H^1(\Gamma,\cF_0(\cE,\Z)),\] where the first isomorphism is
given by Shapiro's Lemma and the second comes from the isomorphism $\operatorname{coind}_{\GMp}^\Gamma(\Z)\simeq
\cF_0(\cE,\Z)$ (cf. \cite[Corollary 16]{Gr}).
\end{remark}

\subsection{The homology class attached to $\psi$}\label{subsection: the homology class} Let $\cH_p=K_p\setminus
\Q_p$ be the $K_p$-rational points of the $p$-adic upper half plane. The group $\psi(\cO_K^\times)$ acts
on $\cH_p$ via the isomorphism $\iota_p:B\otimes\Q_p\simeq
\M_2(\Q_p)$. Since $p$ is inert in $K$ the action has two fixed points; let
$\tau_\psi\in \cH_p$ be one of them. Let also $\varepsilon_K\in \cO_K^\times$ be a unit of norm $1$, and set
$\gamma_\psi=\psi(\varepsilon_K)$. Since $\gamma_\psi\tau_\psi=\tau_\psi$, the element $\gamma_\psi\otimes \tau_\psi$ belongs to  $Z_1(\Gamma,\Div \cH_p)$. From
the exact sequence 
\begin{align}\label{eq: short exact sequence}
 0\lra \Div^0\cH_p\lra \Div \cH_p \stackrel{\deg}{\lra} \Z \lra 0
\end{align}
we obtain the long exact sequence in $\Gamma$-homology
\begin{align}\label{eq: exact sequenc homology}
 \cdots \lra H_2(\Gamma,\Z)\stackrel{\delta}{\lra} H_1(\Gamma,\Div^0\cH_p)\lra H_1(\Gamma,\Div 
\cH_p)\stackrel{\deg_*}{\lra} 
H_1(\Gamma,\Z)\lra \cdots
\end{align}
where $\delta$ is the connecting homomorphism. The group $H_1(\Gamma,\Z)$ is isomorphic to the abelianization  of
$\Gamma$, which is finite (see, e.g., \cite[\S 
2]{LRV-special-values}). If we let $e_\Gamma$ denote its exponent, then
$e_\Gamma[\gamma_\psi\otimes \tau_\psi]$ has a preimage  $[c_\psi]\in H_1(\Gamma,\Div^0\cH_p)$, and this is the  
homology
class
attached to $\psi$ we were looking for.
\begin{remark}\label{rk: c_psi is well defined up to}
 The homology class $[c_\psi]$ is well-defined up to elements in $\delta\left( H_2(\Gamma,\Z) \right)$.
\end{remark}

\subsection{Integration pairing and Darmon points}
Let $f:\P^1(\Q_p)\ra K_p^\times$ be a continuous function and let  $m\in\meas_0(\P^1(\Q_p),\Z)$. The 
\emph{multiplicative 
integral} of $f$ with respect to $m$ is defined as the limit of Riemann products
\[
\Xint\times_{\P^1(\Q_p)}f(t)d m(t)=  \lim_{||\cU||\ra 0}\prod_{U\in\cU}f(t_U)^{m(U)}\in K_p^\times,
\]
where the limit is taken over increasingly finer finite coverings $\cU$ of $\P^1(\Q_p)$ by compact-opens, and $t_U$ is 
any sample point in $U$. If $U\subset\P^1(\Q_p)$ it is customary to denote
\[
 \Xint\times_Uf(t)dm(t)=\Xint\times_{\P^1(\Q_p)}f(t)\charf_U(t)dm(t).
\]
For 
$D\in\Div^0(\cH_p)$ let $f_D:\P^1(\Q_p)\ra K_p^\times$ be a function with divisor $D$ (for instance, if
$D=(\tau_0)-(\tau_1)$ 
one can take $f_D(t)=\frac{t-\tau_0}{t-\tau_1}$). Observe that $f_D$ is well-defined up to multiplication by scalars in 
$K_p^\times$; nevertheless, since these scalars integrate to $1$ there is a well defined pairing
\[
 \begin{array}{ccc}\Div^0(\cH_p)\times \meas_0(\P^1(\Q_p),\Z) & \lra &
K_p^\times\\ 
  (D,m) & \longmapsto &\displaystyle \Xint\times_{\P^1(\Q_p)}f_D(t)dm(t).
 \end{array}
\]
By cup product this defines a pairing
\[
\xymatrix@C15pt@R1pt{
H_1(\Gamma,\Div^0(\cH_p))\times H^1(\Gamma ,\meas_0(\P^1(\Q_p),\Z))\ar[r]^-{\displaystyle\Xint\times\langle \ , \ \rangle}&K_p^\times\\
(\sum_g g\tns D_g,\xi)\ar@{|->}[r] & \displaystyle\prod_g\Xint\times_{\P^1(\Q_p)}f_{D_g}(t)d\xi_g(t),
}
\]
which is equivariant for the Hecke action: 
\begin{align}\label{eq: integration pairing is equivariant}
\Xint\times \langle T_\ell \sum_g g\tns D_g,\xi \rangle=\Xint\times \langle \sum_g g\tns D_g,T_\ell\xi \rangle . 
\end{align}
Define \[L=\left\{\displaystyle\Xint\times\langle
\delta c, [\tilde\mu]\rangle \colon c\in  
H_2(\Gamma,\Z) \right\}\subset K_p^\times,\] where $[\tilde\mu]=t_r[\mu]$ is the cohomology class associated
to $E$
in Section \ref{subsection: the cohomology class}. It turns out that $L$ is a lattice in $K_p^\times$
\cite[Proposition 30]{Gr}. The following key result, which was independently proven by Dasgupta--Greenberg and 
Longo--Rotger--Vigni, relates $L$ to the Tate lattice of $E$.
\begin{theorem}[\cite{greenberg-dasgupta},\cite{LRV}]
 The lattice $L$ is commensurable to the Tate lattice $\langle q_E\rangle$ of $E/K_p$.
\end{theorem}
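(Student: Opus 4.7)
The plan is to reduce the statement to a theorem comparing period integrals of a quaternionic modular form with the Tate parameter of $E$, via the Cerednik--Drinfeld $p$-adic uniformization of the Shimura curve $X_0^D(pM)$ and a Teitelbaum-type formula. This is the route followed (in different guises) by Longo--Rotger--Vigni and by Dasgupta--Greenberg.

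First I would unravel $L$ using the amalgamated product structure $\Gamma=\GM\star_{\GMp}\GMhat$. The Mayer--Vietoris long exact sequence in group homology gives
\[
H_2(\GM,\Z)\oplus H_2(\GMhat,\Z)\lra H_2(\Gamma,\Z)\stackrel{\partial}{\lra} H_1(\GMp,\Z)\lra H_1(\GM,\Z)\oplus H_1(\GMhat,\Z),
\]
so that, up to torsion (which is harmless for commensurability), classes in $H_2(\Gamma,\Z)$ are captured by $H_1(\GMp,\Z)$ modulo the image from the two maximal orders. Under Shapiro's Lemma and Remark \ref{rk: its shapiro}, $[\tilde\mu]$ corresponds to $\varphi\in Z^1(\GMp,\Z)_{p\text{-new}}^\lambda$. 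Combining this identification with the equivariance property \eqref{eq: integration pairing is equivariant} and chasing the cup product in the Mayer--Vietoris sequence, the integral $\Xint\times\langle\delta c,[\tilde\mu]\rangle$ is rewritten as a multiplicative integral on $\P^1(\Q_p)$ of a cocycle for $\GMp$ cupped with a $\GMp$-invariant $1$-cycle that represents the image $\partial c$.

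Second, I would invoke the Jacquet--Langlands correspondence together with Cerednik--Drinfeld: the rigid analytic uniformization of $X_0^D(pM)_{\Q_p}$ identifies the $p$-new quotient $J^{p\text{-new}}$ of its Jacobian with a torus, and on the isogeny factor associated to $E$ the character group of this torus is precisely $\Z\cdot\varphi$ (up to finite index) inside $H^1(\GMp,\Z)$. The Tate lattice $\langle q_E\rangle$ is then the image of the monodromy pairing applied to $\varphi$, and Teitelbaum's formula expresses this monodromy pairing as an explicit multiplicative period integral of the same shape as $\Xint\times\langle\delta c,[\tilde\mu]\rangle$.

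Finally, one compares the two period expressions. The main obstacle is precisely this comparison: one must match the chain-level output of the Mayer--Vietoris boundary $\delta\colon H_2(\Gamma,\Z)\to H_1(\Gamma,\Div^0\cH_p)$ with the combinatorial cycle on $\GMp\backslash\cT$ that appears in Teitelbaum's formula for $q_E$. Once this dictionary is established, surjectivity (onto a finite-index subgroup) of the map $H_2(\Gamma,\Z)\to\langle q_E\rangle\otimes\Q$ follows from the non-vanishing of the $\mathcal{L}$-invariant of $E$ at $p$ (which holds because $E$ has multiplicative reduction at $p$), and injectivity modulo the kernel is automatic since the pairing factors through the $p$-new, $\lambda$-isotypic component, which is one-dimensional. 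Commensurability with $\langle q_E\rangle$ follows.
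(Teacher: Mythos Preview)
The paper does not supply a proof of this theorem: it is stated with attribution to \cite{greenberg-dasgupta} and \cite{LRV} and then used as a black box. So there is no in-paper argument to compare your proposal against; any assessment has to be against the cited references.

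Your overall strategy---reduce via the amalgam/Mayer--Vietoris sequence to periods for $\GMp$, identify these with the monodromy pairing on the $p$-new part, and then match with $q_E$ through an $\mathcal{L}$-invariant comparison---is indeed the shape of the arguments in the cited works. Two points deserve correction, though. First, your invocation of Cerednik--Drinfeld for $X_0^D(pM)$ is misplaced: the Cerednik--Drinfeld uniformization applies when $p$ lies in the \emph{discriminant} of the quaternion algebra, whereas here $p$ is in the level of the indefinite algebra of discriminant $D$. The paper itself flags this distinction (see the remark following the definition of harmonic cocycles), and the proofs in \cite{greenberg-dasgupta} and \cite{LRV} proceed instead through a direct $\mathcal{L}$-invariant comparison rather than via a rigid-analytic uniformization of the Shimura curve. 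Second, your claim that the $\mathcal{L}$-invariant is nonzero ``because $E$ has multiplicative reduction at $p$'' is not a valid justification: multiplicative reduction gives $0<|q_E|_p<1$, but $\mathcal{L}_p(E)=\log_p(q_E)/\mathrm{ord}_p(q_E)$ could still vanish if $q_E$ were a root of unity times a power of $p$. That this does not happen is a genuine transcendence result (the theorem of Barr\'e-Sirieix--Diaz--Gramain--Philibert on the $p$-adic $j$-function), and it is needed as an input.
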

Thanks to this theorem one can find an isogeny  $\beta: K_p^\times/L\ra K_p^\times/\langle q_E\rangle$. Denote by 
$\Phi_{\text{Tate}}:K_p^\times/\langle q_E\rangle\ra E(K_p)$ Tate's uniformization map and let
\[J_\psi=\Xint\times\langle c_\psi,[\tilde\mu]\rangle.\] Observe that $J_\psi$ is a well-defined quantity in
$K^\times/L$ thanks to Remark
\ref{rk: c_psi 
is well defined up to}.

\begin{conjecture}[Greenberg]
The local point $P_\psi=(\Phi_{\text{Tate}}\circ \beta) (J_\psi)\in E(K_p)$ is a global point. More
precisely, it is rational 
over the narrow Hilbert class field $H_K^+$ of $K$.
\end{conjecture}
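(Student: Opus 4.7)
The conjecture stated above is deep and remains open; what follows is a sketch of the strategy one would naturally pursue, paralleling the program that has yielded partial results (such as \cite{bertolini-darmon-genus}) in the split case $D=1$.

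The plan divides into two parts of very different difficulty. First, I would establish a Shimura-type reciprocity law describing how $P_\psi$ transforms as $\psi$ ranges over the finitely many $\Gamma$-conjugacy classes of optimal embeddings $\cO_K \hookrightarrow R$. Concretely, the aim is to show that $J_\psi \bmod L$ depends on $\psi$ only through its class in $\Gamma\backslash\mathrm{Emb}(\cO_K,R)/\cO_K^\times$, and that this set is a principal homogeneous space under $\mathrm{Pic}^+(\cO_K)\simeq \Gal(H_K^+/K)$ acting on the collection $\{P_\psi\}$ via local class field theory at the primes of $K$ above $M$. This step should be essentially formal: the Hecke-equivariance of the integration pairing, together with the multiplicativity of the Riemann products, reduces matters to tracking how $c_\psi$ and $[\tilde\mu]$ transform under the relevant conjugations of $\Gamma$.

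The genuinely hard part is to prove that $P_\psi$ is \emph{global}, i.e.\ that it lies in $E(\bar K)\subset E(K_p)$ at all. The natural approach, following Bertolini--Darmon, would be to place $[\tilde\mu]$ in a Hida family of ordinary $p$-adic cohomology classes deforming along weight space, interpolate the multiplicative integrals $\Xint\times\langle c_\psi,[\tilde\mu_k]\rangle$ as $k$ varies, and identify the resulting $p$-adic analytic function with (a derivative of) an anticyclotomic $p$-adic $L$-function attached to $E/K$. A $p$-adic Gross--Zagier formula of Bertolini--Darmon--Prasanna type would then express this special value as the formal-group logarithm of a Heegner point on a Shimura curve parametrising $E$, thereby forcing $P_\psi$ to be algebraic.

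The principal obstacle, and the reason the conjecture remains open in the quaternionic setting, is exactly this last identification. Unlike the case $D=1$, where an archimedean incarnation of the Darmon construction supplies a complex-analytic avatar of $P_\psi$ that can be compared with classical Heegner points on a modular curve, when $D>1$ there is no complex counterpart of the $p$-adic multiplicative integral; the identification with a Heegner point must therefore be performed purely $p$-adically, and the required quaternionic $p$-adic Gross--Zagier formula is not available in sufficient generality. It is in view of this obstruction that the present paper pursues the complementary strategy of developing efficient algorithms and amassing the numerical evidence reported in Section~\ref{sec: numerical evidence}.
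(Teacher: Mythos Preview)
The statement you were asked to address is a \emph{conjecture}, not a theorem: the paper does not prove it, and indeed states it precisely as Greenberg's conjecture, with the subsequent sections devoted to algorithms and numerical evidence rather than to a proof. You recognise this explicitly in your opening sentence, so there is no discrepancy to flag between your proposal and the paper's own argument---there simply is no proof in the paper to compare against.

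Your sketch of a hypothetical strategy (Shimura reciprocity for the action of $\mathrm{Pic}^+(\cO_K)$ on the set of embeddings, followed by $p$-adic deformation in a Hida family and a quaternionic $p$-adic Gross--Zagier formula) is a reasonable outline of the Bertolini--Darmon programme, and your assessment that the second step is the essential obstruction is accurate. But to be clear: this is not a proof, nor does it claim to be one, and the paper makes no such claim either. If the intent of the exercise was to supply a proof, none exists here; if the intent was to explain why the conjecture is open and what evidence the paper offers instead, your final paragraph does that correctly.
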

\begin{remark}\label{rk: integration pairing is equivariant}
The integration pairing is equivariant with respect to the Hecke action, so $J_\psi$ can also be computed as
\begin{align}\label{eq: the J_psi we compute}
 J_\psi=\Xint\times\langle [t_rc_\psi],[\mu]\rangle.
\end{align}
\end{remark}

\section{The effective computation of quaternionic $p$-adic Darmon points}\label{sec: the algorithm}
In this section we present the explicit algorithms that allow for the effective calculation of the quaternionic $p$-adic Darmon points. As we reviewed in \S\ref{subsection: Greenberg construction}, this amounts to compute the cohomology class associated to the elliptic curve, the homology class corresponding to an optimal embedding, and the integration pairing. 

In \S\ref{subsection: Computation of the cohomology class} we show how to compute the cohomology  class (the main algorithmic result is given in Theorem \ref{th: reduce in amalgam}), and in  \S\ref{subsection: Computation of the homology class} how to compute the homology class (the main algorithm is stated as Theorem \ref{th:homology-general}). In fact, these two algorithms are already enough to compute the Darmon points, as one can then evaluate the integration pairing via Riemann products, which can be thought of as the most naive method of integration. This is briefly recalled in \S\ref{subsection: computation riemann}.
 
Finally, in \S\ref{subsection: example riemann} we illustrate the use of this method by giving a detailed explicit example of a Darmon point calculated with the algorithms introduced this section, together with Riemann products for approximating the integrals. This also serves as a motivation for Section \ref{sec: overconvergent}, because even though in principle is possible to compute the integrals using Riemann products, it is too computationally costly. Section \ref{sec: overconvergent} will be devoted to an efficient method for calculating the type of integrals arising in $p$-adic Darmon points.

\subsection{Computation of the cohomology class} \label{subsection: Computation of the cohomology class}
The first step is to calculate a cocycle $\varphi\in 
H^1(\GMp,\Z)_{p\text{-new}}$ that lies in the $\lambda$-isotypical component by the Hecke action. We remark that there are algorithms for effectively dealing with arithmetic subgroups of indefinite quaternion division algebras. More concretely, there are algorithms that:
\begin{itemize}
 \item compute a presentation of $\GM$ and $\GMp$ in terms of generators and relations, and
 \item express an element of $\GM$ or $\GMp$ as a word in the generators.
\end{itemize}
These algorithms were introduced by John Voight \cite{voight-fundamental} and are implemented in Magma \cite{magma}.

 Note that we have
\[
 H^1(\GMp,\Z)=\Hom(\GMp,\Z)=\Hom(\GMp_{\text{ab}},\Z),
\]
and that the finitely generated abelian group $\GMp_\text{ab}$ is easy to calculate from an explicit presentation of
$\GMp$. Using this description and formula \eqref{eq: def of T_ell} one can algorithmically compute the Hecke action on
$H^1(\GMp,\Z)$ (cf. \cite{greenberg-voight} for more details). 

Using the Atkin--Lehner operator $W_p$ one computes the $p\text{-new}$ part of the group $H^1(\GMp,\Z)$, and one then proceeds to diagonalize it with respect to several Hecke operators
$T_\ell$, until the common eigenspace corresponding to $\lambda$ has rank $2$. In practice, a few values of $\ell$ are usually 
enough. Then the space where the Atkin--Lehner operator $W_\infty$ acts with sign $\sigma\in\{\pm 1 \}$ has rank
$1$, and we can take $\varphi$ to be one of its generators. 

The final step is to compute the values of $\mu$ by means of formula \eqref{eq: definition of mu}. In order to do so, one needs to be  able to express any element  
$g\in \Gamma$ as $g=h(g)\gamma_e$, where $h(g)\in\GMp$ and $\gamma_e\in\cY$. In the next theorem we show that this can be, indeed, computed in an algorithmic fashion. 

\begin{theorem}\label{th: reduce in amalgam}
  There is an algorithm that, given $g\in\Gamma$, outputs $h(g)\in\GMp$ and $\gamma_e\in\cY$ such that 
$g=h(g)\gamma_e$, in time proportional to the distance from $e_*$ to $e=g^{-1}(e_*)$
\end{theorem}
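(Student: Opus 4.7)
The plan is to exploit the bijection $\GMp\backslash\Gamma\leftrightarrow \cE^+$ given by $g\mapsto g^{-1}(e_*)$: once we know the edge $e:=g^{-1}(e_*)$, the representative $\gamma_e\in\cY$ is uniquely determined by Definition~\ref{def: radial system}, and then $h(g):=g\gamma_e^{-1}$ is automatically in $\GMp$ since $h(g)^{-1}(e_*)=\gamma_e(e)=e_*$. The algorithm will therefore compute $e$, walk the unique geodesic from $e_*$ to $e$ in the Bruhat--Tits tree, and read off $\gamma_e$ as a product of factors drawn from $\Upsilon\cup\{\tilde\gamma_1,\ldots,\tilde\gamma_p\}$ according to the recursive description of the radial system.

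Concretely, given $g$ I would first apply $\iota_p$ to compute $e=g^{-1}(e_*)$ as a pair of homothety classes of $\Z_p$-lattices, and determine the geodesic $e_*=f_0,f_1,\ldots,f_d=e$ in $\cT$, where $d=d(e_*,e)$. Each $f_k$ can be obtained from $f_{k-1}$ by one step in the tree, so this phase costs $O(d)$. Next I would reconstruct $\gamma_e$ by induction on $d$: for $d=0$ return $(h(g),\gamma_e)=(g,1)$; for $d\geq 1$, let $v_d$ be the endpoint of $e$ farther from $v_*$, so that $\gamma_e=\gamma_{v_d}$ by conditions~\ref{item: rs 4}--\ref{item: rs 5}. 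Conditions~\ref{item: rs 2}--\ref{item: rs 3} then give $\gamma_{v_d}=\delta_d\gamma_{v_{d-1}'}$ for an explicit $\delta_d\in\{\gamma_1,\ldots,\gamma_p\}\cup\{\tilde\gamma_1,\ldots,\tilde\gamma_p\}$ (the choice depending on the parity of $v_{d-1}$) and an appropriate even vertex $v_{d-1}'$ one step closer to $v_*$. Iterating yields $\gamma_e=\delta_1\delta_2\cdots\delta_d$, after which we set $h(g)=g\gamma_e^{-1}$.

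The work at each step of the walk is to identify the correct index $i$ such that $\delta_k$ is $\gamma_i$ or $\tilde\gamma_i$; by the explicit local descriptions \eqref{eq: def of gammas} and \eqref{eq: w_p} (and the relation $\tilde\gamma_i=p^{-1}\omega_p\gamma_i\omega_p$), this reduces to a matrix computation in $\M_2(\Z/p\Z)$ acting on the vertex $v_{k-1}$ translated to $v_*$, and is hence performed in time polynomial in $\log p$. Summed over the $d$ steps this gives the claimed total running time proportional to $d(e_*,e)$. The main technical point to verify is that the modulo-$p$ identification of $\delta_k$ at each step is indeed consistent with conditions~\ref{item: rs 2}--\ref{item: rs 3}---that is, that the $p+1$ edges at an even vertex $v$ (resp.\ terminating at an odd vertex) really are permuted in the expected way by left multiplication by $\gamma_i\gamma_v$ (resp.\ $\tilde\gamma_i\gamma_v$); this is a direct consequence of the local forms listed above and the fact that $\Upsilon$ is a set of representatives for $\GMp\backslash\GM$.
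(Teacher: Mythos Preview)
Your proposal is correct and follows essentially the same approach as the paper's proof: both compute $\gamma_e$ by walking the unique geodesic between $e_*$ and $e=g^{-1}(e_*)$ in the Bruhat--Tits tree, accumulating one factor from $\Upsilon\cup\{\tilde\gamma_1,\ldots,\tilde\gamma_p\}$ per step, and then setting $h(g)=g\gamma_e^{-1}$. The only presentational difference is that the paper organizes the walk by iteratively left-multiplying $g$ by the appropriate $\gamma_i^{-1}$ or $\tilde\gamma_i^{-1}$ and using the Smith normal form of $\iota_p(g^{-1})$ to decide at each stage whether the current edge is inward or outward (and hence which family of factors to search), whereas you phrase it as first locating the geodesic and then reading off the factors via a mod-$p$ matrix calculation; these are equivalent descriptions of the same procedure.
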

In order to describe the algorithm and prove its correctness, it is useful to recall the notion of
\emph{distance between lattices} (cf. \cite[Chapter II,
\S 1.1]{serre-trees}). If $\Lambda$ and $\Lambda'$ are lattices in $\Q_p^2$ there exists a basis $\{b_1,b_2\}$ for 
$\Lambda$ such that $\{p^{x}b_1, p^{y}b_2\}$ is a basis for $\Lambda'$ for certain $x,y\in\Z$. Then the distance
$d(\Lambda,\Lambda')$ is defined to be $|x-y|$. It is independent of the choice for $\{b_1,b_2\}$, and it only depends
on the homothety classes of $\Lambda$ and $\Lambda'$. In addition, this notion of distance coincides with the distance
in the Bruhat--Tits tree; that is to say, if $\Lambda$ and $\Lambda'$ represent vertices $v$ and $v'$ in $\cV$ then
$d(\Lambda,\Lambda')=d(v,v')$.

Under the correspondence $\GMp\setminus\Gamma\leftrightarrow \cE^+$ an element $g\in \Gamma$ is associated with 
$e=g^{-1}(e_*)\in\cE^+$. Its source $s(e)=g^{-1}(v_*)$ is then represented by the lattice $g^{-1}(\Z_p\oplus
\Z_p)$,  and its target $ t(e)=g^{-1}(\hat v_*)$  by the lattice $g^{-1}(\Z_p\oplus p\Z_p)$. Thus, if we let
$\iota_p(g^{-1})=\smtx a b c d$, the
columns $\smtx a b c d$ are a basis for the lattice
$s(e)$ and the columns of $\smtx{a}{bp}{c}{dp}$ are a basis for $t(e)$. The distances $d(s(e),v_*)$ and $d(t(e),v_*)$
are easily read from the Smith normal form of these matrices: 
if
\[
 \smtx a b c d = G \smtx{d_1}{0}{0}{d_2}H,\ \ \smtx{a}{pb}{ c}{ pd} = G' \smtx{d_1'}{0}{0}{d_2'}H', \text{ for
some } G,G',H,H'\in\GL_2(\Z_p),
\]
then 
\begin{align}\label{eq: distance of lattices}
d(s(e),v_*)=|v_p(d_1)-v_p(d_2)|\ \ \text{ and }\ \ d(t(e),v_*)=|v_p(d_1')-v_p(d_2')|. 
\end{align}
We may identify $g$ with its associated edge $e=g^{-1}(e_*)$, and use expressions
such as $d(s(g),v_*)$ or $d(t(g),v_*)$. We say that $e$ (or $g$) is an \emph{outward} edge if
$d(s(g),v_*)<d(t(g),v_*)$ and that it is  \emph{inward}
otherwise. Observe that one can easily determine whether $g$ is inward or outward by means of formula $\eqref{eq:
distance of lattices}$.

\subsection*{Proof of Theorem \ref{th: reduce in amalgam}} Given an element $g\in\Gamma$ let $e$ be the edge
$g^{-1}(e_*)$. It is enough to compute the representative $\gamma_e\in\cY$, since then
$h(g)=g\gamma_e^{-1}\in\GMp$.

Observe that if $g$ is outward and $d(s(g),v_*)=0$ then $\gamma_e$ equals some
$\gamma_i\in\Upsilon$ (see the edges leaving $v_*$ in Figure \ref{fig: BT}), and it is easily computed since it is the
single $\gamma_i$ such that $\gamma_i^{-1}g\in \GMp$. For general $g$ the algorithm consists on recursively reducing to
this particular case as follows: 
\begin{enumerate}
 \item\label{enumerate: first step in the algorithm} If $g$ is outward and $d(s(g),v_*)>0$, then there exists a single 
$\gamma_i$ such
that $\gamma_i^{-1}g$ is associated with an inward edge. Compute such $\gamma_i$ and set
$g=\gamma_i^{-1}g$.
\item\label{enumerate: second step in the algorithm} If $g$ is inward, then there exists a single
$\tilde\gamma_i$ such that $\tilde\gamma_i^{-1}g$ is outward. In addition, for such $\tilde\gamma_i$ we have that
$d(s(\tilde\gamma_i^{-1}g),v_*)<d(s(g),v_*)$. Set $g=\tilde\gamma_i^{-1}g$.
\item If $g$ is outward and $d(s(g),v_*)=0$ compute the single $\gamma_i$ such that $\gamma_i^{-1}g\in\GMp$ and end the
algorithm. Otherwise go to step \ref{enumerate: first step in the algorithm}.
\end{enumerate}
Every time we run step \ref{enumerate: second step in the algorithm} the distance $d(s(g),v_*)$ decreases, so the
algorithm terminates. The representative $\gamma_e$ is then the product of all the $\gamma_i$ and $\tilde
\gamma_j$ computed in each step. Finally, it is clear that the number of stages is  $d(s(e),v_*)$.

\qed

\subsection{Computation of the homology class}\label{subsection: Computation of the homology class} Given the real
quadratic field $K$ and its ring of integers 
$\cO_K=\Z[\omega]$, the first step is to compute an embedding of $\Z[1/p]$-algebras $\cO_K\hookrightarrow R$. 
In fact, thanks to our running assumptions on $K$ we can find $\Z$-algebra embeddings 
$\cO_K\hookrightarrow R_0(M)$. Computing them in practice amounts to finding elements in $B$ whose reduced 
norm and trace coincide with that of $\omega$, and one can use the routines of Magma \cite{magma} to compute them (e.g. 
the routine  \texttt{Embed( , )}). 

Every embedding $\psi_0:\cO_K\hookrightarrow R_0(M)$ induces $\psi\colon\cO_K\hookrightarrow R$ via the inclusion $R_0(M)\subset R$,
giving rise to the $1$-cycle $\gamma_\psi\otimes \tau_\psi$ in
$Z^1(\Gamma,\Div\cH_p)$ via the process described in Section \ref{subsection: the homology class}.
Denote by $e_\Gamma$ the exponent of $H_1(\Gamma,\Z)$, so 
that the element
\[
e_\Gamma[\gamma_\psi\otimes \tau_\psi]\in H_1(\Gamma,\Div\cH_p)
\]
lifts under $\deg_*$ to an element $[c_\psi]\in
H_1(\Gamma,\Div^0\cH_p)$
(cf. the exact sequence \eqref{eq: exact sequenc homology}).

We devote the rest of this subsection to describe an algorithm for computing $c_\psi$.  Note that once $c_\psi$  is found,  it is easy to compute $\tilde c_\psi$ by means of formula~\eqref{eq: def of T_ell}.

Let $\langle X \mid R\rangle$ be a presentation of $\Gamma$, where $X=\{x_1,\ldots,x_n\}$ are the generators and
$R=\{r_1,\ldots,r_m\}$ the relations. It can be explicitly computed by means of Voight's algorithms, which provide
presentations for $\GM$ and $\GMp$, say \[\GM=\langle Y \mid S \rangle\ \text{ and }\ \GMp=\langle Z \mid
 T\rangle.\] A set of generators of $\GMhat$ is $\hat Y=\{\hat y_i:=\omega_p^{-1}y_i\omega_p\colon y_i\in Y\}$, and a 
set of relations $\hat S$ is that in which the $\hat y_i$ satisfy the same relations as the $y_i$. Then each $z\in Z$ 
can be expressed as a word in the generators of $Y$, that we denote $\alpha(z)$, and as a word in the generators of 
$\hat Y$, that we denote $\hat\alpha(z)$. If we let
$S_Z=\{\alpha(z)\hat\alpha(z)^{-1}\colon z\in Z\}$ then a
presentation of $\Gamma=\GM \star_{\GMp}\GMhat$ is given by
\[
 \langle X\mid R\rangle =\langle Y\cup \hat Y\mid S\cup \hat S\cup S_Z \rangle. 
\]
Any $g\in \GM$ can be expressed as a word in $Y$ by means of Voight's algorithm~\cite{voight-fundamental}. Combining
this with the algorithm of Theorem \ref{th: reduce in amalgam} we obtain an algorithm for expressing any $g\in \Gamma$
as a word in $X$. 

The following notation will be useful in describing the algorithm for computing $c_\psi$: If $w$ is a word and
$x\in X$, we define $v_x(w)\in\ZZ$ as the sum of the exponents of $x$ appearing in $w$. We also set $v_X(w) =
(v_{x_1}(w),\ldots,v_{x_n}(w))$. For example, if $w = x_1^3x_2^3x_3^{-1}x_1^{-2}x_3^3$, then $v_X(w) = (1,3,2)$.

The first step in lifting $e_\Gamma[\gamma_\psi\otimes\tau_\psi]=[\gamma_\psi^{e_\Gamma}\otimes\tau_\psi]$ consists in computing
$e_\Gamma$. This is easily obtained using integral linear algebra to obtain the structure of $\Gamma_{\text{ab}}$ from 
the presentation of $\Gamma$.

Next, one obtains a word representation $w$ for $\gamma_\psi^{e_\Gamma}$. Since we are assuming that
$\gamma_\psi^{e_\Gamma}$ is trivial in $H_1(\Gamma,\ZZ)\cong \Gamma_{\text{ab}}$, the vector  $v_X(w)$ belongs to
the image of the abelianized relations, say $v_X(w) = a_1 v_X(r_1) + \cdots +a_k v_X(r_m)$. We consider instead the word
$w'=w r_1^{-a_1}\cdots r_m^{-a_m}$, which represents the same element $\gamma_\psi^{e_\Gamma}\in\Gamma$, but which satisfies
$v_X(w') = 0$.

In what follows we write $\equiv$ to mean equality up to boundaries. The algorithm of Theorem \ref{th:homology-general} below provides a way to find elements $x_i\in\Gamma$ and $D_i\in\Div^0(\cH_p)$ such that
\[
w'\tns \tau_\psi\equiv \sum_{i=1}^n x_i\tns D_i,\ \text{ with the } D_i\in\Div^0(\cH_p),
\]
and therefore to compute $c_\psi=\sum_{i=1}^n x_i\otimes D_i$.

\begin{theorem}
\label{th:homology-general}
 There exists an algorithm that, given $g\in\Gamma$ represented by a word $w$ and given $D\in \Div\cH_p$, computes elements $x_i\in\Gamma$ and $D_i\in\Div^0\cH_p$ such that
\[
g\tns D\equiv \sum_{i=1}^n x_i\tns D_i, \text{ with $\deg(D_i) = v_{x_i}(w)\deg(D)$.}
\]
\end{theorem}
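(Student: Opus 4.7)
The plan is to exploit the standard telescoping identity in group homology. From the boundary formula~\eqref{eq: boundary maps}, working modulo $B_1(\Gamma,\Div\cH_p)$, we have
\[
gh\otimes v\equiv g\otimes v+h\otimes g^{-1}v,
\]
and, specializing to $h=g^{-1}$ and using $1\otimes v=\partial_2(1\otimes 1\otimes v)\equiv 0$, the companion identity
\[
g^{-1}\otimes v\equiv -g\otimes gv.
\]

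Given the word $w=x_{i_1}^{\epsilon_1}\cdots x_{i_k}^{\epsilon_k}$, set $g_0=1$ and $g_j=x_{i_1}^{\epsilon_1}\cdots x_{i_j}^{\epsilon_j}$, so that $g_k=g$. Applying the first identity to the factorization $g_j=g_{j-1}\cdot x_{i_j}^{\epsilon_j}$ and proceeding by induction on $k$, I obtain
\[
g\otimes D=g_k\otimes D\equiv \sum_{j=1}^k x_{i_j}^{\epsilon_j}\otimes g_{j-1}^{-1}D.
\]
I then rewrite each term with $\epsilon_j=-1$ via the companion identity as $x_{i_j}^{-1}\otimes g_{j-1}^{-1}D\equiv -x_{i_j}\otimes x_{i_j}g_{j-1}^{-1}D$, and collect terms by generator. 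This yields
\[
g\otimes D\equiv \sum_{m=1}^n x_m\otimes D_m,\qquad D_m:=\sum_{\substack{j:\, i_j=m\\ \epsilon_j=+1}} g_{j-1}^{-1}D \;-\sum_{\substack{j:\, i_j=m\\ \epsilon_j=-1}} x_m\, g_{j-1}^{-1}D.
\]

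Since translation by $\Gamma$ preserves the degree of a divisor, a direct count gives $\deg D_m=v_{x_m}(w)\deg D$, matching the claim (and in particular placing each $D_m$ in $\Div^0\cH_p$ whenever $v_{x_m}(w)\deg D=0$, as occurs for the reduced word $w'$ with $v_X(w')=0$ that triggers the application). The construction is manifestly algorithmic: parse $w$ once from left to right, maintain the running prefix product $g_j\in\Gamma$, and accumulate the appropriate translate of $\pm D$ into the bin indexed by $x_{i_j}$. I do not anticipate any genuine obstacle beyond careful sign bookkeeping when replacing $x_{i_j}^{-1}$ by $x_{i_j}$; this is the single spot where a misstep could silently compromise the degree formula.
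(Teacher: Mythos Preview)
Your argument is correct and follows essentially the same approach as the paper: both rely on the identities $gh\otimes D\equiv g\otimes D+h\otimes g^{-1}D$ and $g^{-1}\otimes D\equiv -g\otimes gD$ to telescope $g\otimes D$ along the word and then collect terms by generator. The only cosmetic difference is that the paper first groups maximal runs $x_{i_s}^{a_s}$ and reduces each power via $g^k\otimes D\equiv g\otimes(D+g^{-1}D+\cdots+g^{1-k}D)$, whereas you process the word one letter at a time; the resulting divisors and degree count are the same.
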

The proof of this theorem consists on making systematic use of the following Lemma.
\begin{lemma} The following relations hold true in $Z_1(\Gamma,\Div\cH_p)$.
\label{lemma:homology-technical}
  \begin{enumerate}
  \item\label{lemma:homology-technical-1} $gh\tns D\equiv g\tns D + h\tns g^{-1}D$.
  \item\label{lemma:homology-technical-2} For all $k\geq 0$, $g^k\tns D \equiv g\tns D'$, with $D' = D+g^{-1}D+\cdots+g^{1-k}D$.
    \item\label{lemma:homology-technical-2bis} $g^{-1}\tns D \equiv -g\tns gD$.
    \item\label{lemma:homology-technical-3} If $gD = D$, then $g^k\tns D \equiv k g\tns D$ for all $k\in \ZZ$.
  \end{enumerate}
\end{lemma}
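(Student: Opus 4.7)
The approach is that every one of these four congruences is a direct consequence of the boundary-map formula $\partial_2(g\otimes h\otimes D) = h\otimes g^{-1}D - gh\otimes D + g\otimes D$ from \eqref{eq: boundary maps}, applied to carefully chosen triples. So my plan is to derive (1) as the immediate rearrangement, then bootstrap (2), (3), (4) from (1) together with the auxiliary observation that $1\otimes D\equiv 0$ (which itself comes from $\partial_2(1\otimes 1\otimes D)=1\otimes D$).

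Concretely, for (1) I would simply read the boundary formula as $gh\otimes D - g\otimes D - h\otimes g^{-1}D \in B_1(\Gamma,\Div\cH_p)$. For (2), I induct on $k\geq 0$: the case $k=0$ is the auxiliary observation above, and for the step I apply (1) with the pair $(g,g^{k-1})$ to get $g^k\otimes D\equiv g\otimes D + g^{k-1}\otimes g^{-1}D$, then invoke the inductive hypothesis on the last term (with $D$ replaced by $g^{-1}D$) to telescope the sum $D+g^{-1}D+\cdots+g^{1-k}D$ into place. For (3), apply (1) to $g\cdot g^{-1}=1$, obtaining $0\equiv 1\otimes D\equiv g\otimes D + g^{-1}\otimes g^{-1}D$; replacing $D$ by $gD$ yields $g^{-1}\otimes D\equiv -g\otimes gD$.

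Finally, for (4) I combine (2), (3) and the hypothesis $gD=D$. When $k\geq 0$ every summand $g^{-i}D$ in (2) equals $D$, so the formula collapses to $g^k\otimes D\equiv g\otimes(kD)=k(g\otimes D)$. For $k=-m<0$, I apply (3) to the pair $(g^m,D)$ to get $g^{-m}\otimes D\equiv -g^m\otimes g^m D$, and since $g^m D=D$ by hypothesis this becomes $-g^m\otimes D\equiv -m(g\otimes D)$ by the positive case. The only subtle point, and the one I would flag as the potential pitfall rather than a genuine obstacle, is in this last step: one must apply (3) with $g^m$ substituted for $g$ (not iterate (3) $m$ times, which would introduce a spurious twist), so that the $g^m$-action on $D$ cancels cleanly via the fixed-point hypothesis.
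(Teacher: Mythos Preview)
Your proof is correct and follows essentially the same approach as the paper: both derive (1) directly from the boundary formula \eqref{eq: boundary maps}, obtain (3) by applying (1) to the identity $g\cdot g^{-1}=1$ (the paper uses $g^{-1}g=1$, which is an immaterial variant), prove (2) by induction on $k$, and deduce (4) from (2) and (3). Your write-up is in fact more explicit than the paper's, particularly in handling the base case $k=0$ via $1\otimes D\equiv 0$ and in spelling out the negative case of (4).
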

\begin{proof}
  The first statement is direct from the relation in homology (cf. \eqref{eq: boundary maps}). Note that $D'=g^{-1}D$ has the same degree as $D$.

Next, observe that:
\[
0 \equiv g^{-1}g\tns D \equiv g^{-1}\tns D + g\tns gD,
\]
so we obtain $g^{-1}\tns D\equiv g\tns D'$, with $D'=-gD$ satisfying $\deg(D')=-\deg(D)$, which is the third statement. 
The second statement is proven using induction on $k$, and the last statement is a particular case of the second and 
third ones.
\end{proof}

\subsubsection*{Proof of Theorem~\ref{th:homology-general}}
Suppose that $w=x_{i_1}^{a_1}\cdots x_{i_t}^{a_t}$ is a word representing $g$. Repeated applications of 
Lemma~\ref{lemma:homology-technical}, part \ref{lemma:homology-technical-1} allow to express:
\[
g\tns D \equiv \sum_{s=1}^t x_{i_s}^{a_s}\tns D'_s,\quad \deg(D'_s) = \deg(D).
\]
Using Lemma~\ref{lemma:homology-technical}, part \ref{lemma:homology-technical-2} the above can be rewritten as
\[
g\tns D \equiv \sum_{s=1}^t x_{i_s}\tns D''_s,\quad \deg{D''_s} = a_s\deg(D).
\]
Finally, one can collect the terms involving each of the generators $x\in X$, to obtain:
\[
g\tns D \equiv \sum_{i=1}^n x_i\tns D_i,
\]
and note that $\deg(D_i)= v_{x_i}(w)\deg(D)$, as wanted.

\subsection{Computation of the integration pairing via Riemann products}\label{subsection: computation riemann} In \S\ref{subsection: Computation of the
cohomology class}
and \S\ref{subsection: Computation of the homology class} we have seen how to compute in practice the cocycle $\mu$
attached to $E$ and the cycle $c_\psi$ attached to an optimal embedding. The integration pairing then gives the Darmon point attached to $\psi$. That is to say, 
\begin{align}\label{eq: formula J_psi}
 J_\psi=\Xint\times\langle [\tilde c_\psi],[\mu]\rangle=\prod_{k=1}^C \Xint\times_{\P^1(\Q_p)}f_{D_k}(t) d\mu_{g_k}(t).
\end{align}
Each individual term $\Xint\times_{\P^1(\Q_p)}f_{D}(t) d\mu_{g}(t)$ can be numerically approximated by
a partial Riemann product, which for a covering  $\cU$ of $\P^1(\Q_p)$ is 
\[
 \prod_{U\in\cU}f_D(t_U)^{\mu_g(U)}, \ \  \ \ \text{$t_U$ any sample point in $U$}.
\]
Suppose that $D= \tau_2-\tau_1\in \Div^0\cH_p$, and that we want to compute the integral 
\begin{align*}
\Xint\times_{\mathbb{P}^1(\Q_p)}f_D(t)d\mu_g= \Xint\times_{\mathbb{P}^1(\Q_p)}\left(\frac{t-\tau_2}{t-\tau_1} \right)d\mu_g
\end{align*}
with an accuracy of $p^{-n}$. The size of the covering $\cU$ is determined by the affinoids in which $\tau_1$ and $\tau_2$ lie. To be more precise, let $r$ be a positive integer such that none of the elements $\tau_1,\tau_2,\omega_p\tau_1,\omega_p\tau_2$ is congruent to an integer modulo $p^r$. That is to say, such that
\begin{align}\label{eq:r}
  |\tau_1-i|_p> p^{-r}, \ |\tau_2-i|_p> p^{-r}, \ |\omega_p \tau_1-i|_p> p^{-r}, \ |\omega_p\tau_2-i|_p> p^{-r} \ \text{ for all } i\in \Z. 
\end{align}
Observe that we can find such an $r$ because $\tau_1,\tau_2,\omega_p\tau_1,\omega_p\tau_2$ do not belong to $\Q_p$. The function $f_D(t)$ is
locally constant modulo $p^n$ when restricted to open balls of
diameter
$p^{-(n+r)}$. Therefore, in order to obtain the value of $J_\psi$ correct modulo $p^n$ it is
enough to consider a finite covering $\cU_{n+r}$ of $\PP^1(\QQ_p)$  consisting of open balls of diameter $p^{-(n+r)}$.

Since $\mu_g$ is defined as an element of $\cF_0(\cE,\Z)\simeq\cF(\cE^+,\Z)$ it is useful to describe this covering of 
$\P^1(\Q_p)$ in terms of $\cE^+$ as follows. Note that
\[
\PP^1(\QQ_p)=\coprod_{t=0}^p \tilde\gamma_t^{-1}\ZZ_p,
\]
with $\tilde\gamma_t^{-1}\ZZ_p$ of diameter $1/p$. In Corollary~\ref{cor: opens in Z_p} we have described a 
covering $\cB(\ZZ_p,p^{-n})$ of $\ZZ_p$, and therefore one obtains the corresponding covering of $\PP^1(\QQ_p)$ as:
\[
\PP^1(\QQ_p)=\coprod_{t,i_m,j_m} (\tilde\gamma_{i_1}\gamma_{j_1}\cdots
\tilde\gamma_{i_n}\gamma_{j_n}\tilde\gamma_t)^{-1}\ZZ_p,
\]
where the indexes $i_m,j_m$ vary over $\{1,\dots,p\}$ and $t$ varies over $\{0,\ldots,p\}$.

\subsection{A numerical example}\label{subsection: example riemann}
We let $p = 13$, $D = 2\cdot 3$, and $M=1$.
Consider the elliptic curve with Cremona label ``78a1'':
\[
E\colon y^2 + xy = x^3 + x^2 - 19x + 685
\]

Let $K=\QQ(\sqrt{5})$, which is the quadratic field
with smallest discriminant satisfying that $2$, $3$ and $13$ are
inert in $K$. One observes that the point $P = (-2,12\sqrt{5}+1)\in E(K)$ generates the free part of $E(K)$.

Let $B$ be the quaternion algebra ramified precisely at $2$ and $3$.
It can be given as the $\QQ$-algebra $\QQ\langle i,j\rangle$, with relations $i^2 = 6$, $j^2= -1$, $ij = -ji$.

Let $\iota_{13}$ be the $\QQ$-algebra embedding of $B\to M_2(\QQ_{13})$ which sends:
\[
i\mapsto \mtx{0}{-1}{1}{0},\quad j\mapsto \frac{1}{\rho}\mtx{-1}{-24}{4}{1},
\]
with $\rho$ being the unique square root of $95$ in $\QQ_{13}$ which satisfies $\rho \equiv 2 \pmod{13}$. Let $R_0(1)\subset B$ be the maximal order with generators $\{1,i,(1+i+j)/2,(i+k)/2\}$, and let $\psi:\cO_K\hookrightarrow R_0(1)$ be the embedding that sends $\sqrt{5}\in K$ to $-i-j$. This
yields:
\[
\tau_\psi = (11  g + 9) + (12  g + 7) \cdot 13 + (12  g + 11) \cdot 13^{2} + (12  g + 12) \cdot 13^{3} + (12  g + 7) \cdot 13^{4} + O(13^5), 
\]
where $g\in K_{13}$ satisfies $g^2-g-1 = 0$, and $\gamma_\psi = (3-i-j)/2$.

The element $\gamma_\psi$ does not belong to the commutator subgroup of $\Gamma_0^6(1)_\text{ab}$, but $\gamma_\psi^{12}$ does. We rewrite the 
cycle
$\gamma_\psi^{12}\tns\tau_\psi$ in $H_1(\Gamma_0^6(1),\Div\cH_{13})$ as the sum of $16$ terms. Also, we act on
$\gamma_\psi^{12}$ with $t_5$.

Finally, we compute the integration pairing using Riemann products on coverings consisting of those opens of diameter
$13^{-n}$ for $n\in\{1,2,3\}$. Table~\ref{table:timings-riemann} gives the time that this computation took in our test computer.
Observe that the number of evaluations grows exponentially in $n$, and therefore so does the time it takes to complete
the integration.
\begin{table}[h]
\center
\begin{tabular}{rrr}
\toprule
$n$& Num. opens & Time (s)\\
\midrule
$1$ & $14$ & $3$\\
$2$ & $182$ & $49$\\
$3$ & $2366$ & $1158$\\
\bottomrule
\end{tabular}
\caption{Running time increases exponentially with the precision.}
\label{table:timings-riemann}
\end{table}

We obtain the value $J_\psi = (3g+2)13 + (g+9)13^2 + O(13^3)$ and, after applying the Tate parametrization, obtain $P_\psi\in E(K_{13})$ having coordinates
\[
(x,y) = (11 + 8\cdot 13 + 5\cdot 13^2 +O(13^3),(11g + 2) + (7g + 11)\cdot 13 + (7g + 12)\cdot 13^2 +O(13^3)).
\]
This point agrees with $48\cdot P$ up to the working precision of three
$13$-adic digits. Note that $48 = 12\cdot (5+1-a_5(E))$. The factor of $5+1-a_5(E)$ appears because of the application
of $t_5$, and the factor of $12$ appears because it was needed to kill the torsion of
$\Gamma_0^6(1)_\text{ab}$.

Although the previous computation gives evidence in support of the conjecture, the result is not very satisfying. 
Firstly, an approximation modulo $13^3$ could conceivably come from a numerical coincidence. More importantly, a 
previous knowledge of a generator for $E(K)$ was needed, and finding such a point is a hard problem in general. If we 
had a way to obtain a much better approximation, we could use algebraic recognition routines to guess the 
algebraic point. This is in fact the goal of the next section.

\section{The integration pairing via overconvergent cohomology}\label{sec: overconvergent}

We continue with the notation of \S\ref{subsection: computation riemann}. Namely, $\mu$ denotes the cohomology class associated to $E$ and $\tilde c_\psi$ the homology class associated to an optimal embedding $\psi$, which is of the form
\[\tilde c_{ \psi}=\sum_k g_k\otimes (\tau'_k-\tau_k) \]
for some $ g_k \in \Gamma$  and $ 
\tau_k,\tau_k'\in\cH_p$.
Therefore, the integrals involved in the computation of $J_{ \psi}$ are of the form
\begin{align}\label{eq: int I}
\Xint\times_{\P^1(\Q_p)}\left(\frac{t-\tau_2}{t-\tau_1} \right)d\mu_g(t),\ \text{ 
with } g\in\Gamma \text{ and } \tau_1,\tau_2\in\cH_p.
\end{align}
The goal of this section is to provide an algorithm for computing these integrals based on the overconvergent cohomology lifting theorems of \cite{pollack-pollack} which is more efficient than evaluating the Riemann products. In fact, the complexity of the overconvergent method that we present is polynomial in the number of $p$-adic digits of accuracy, whereas computing via Riemann sums is of exponential complexity.

 Since the type of integrals that can be directly computed by means of overconvergent cohomology are not exactly of the form \eqref{eq: int I}, we first need to perform certain transformations and reductions. Thus the method that we next describe can be divided into the following two steps:
\begin{enumerate}
\item Reduce the problem of computing integrals of the form \eqref{eq: int I} to that of computing the so-called \emph{moments} of $\mu$ at elements of $\GMp$. That is to say, express the integrals of \eqref{eq: int I} in terms of integrals of the form
  \begin{align}\label{eq: final ints}
    \int_{\Z_p} t^id\mu_g, \text{ for }g\in \GMp \text{  and  }i\in \Z_{\geq 0}.
  \end{align}
\item Give an algorithm for computing the integrals \eqref{eq: final ints} by means of the overconvergent cohomology lifting techniques of \cite{pollack-pollack}. 
\end{enumerate}
These two steps are explained in \S\ref{from int to mom} and \S\ref{sec: overconvergent cohomology}, respectively.
\subsection{From general integrals to moments}\label{from int to mom}
The first step in order to express integrals of the form \eqref{eq: int I} in terms of the moments \eqref{eq: final ints} is to consider covers of $\mathbb{P}^1(\Q_p)$ such that the integrand is analytic on each of the opens. Before fixing our choice of cover, we begin by proving a lemma that we will need in this process.
\begin{lemma}\label{lemma: previous lemma}
  Suppose that $\gamma\in \Gamma$ is of the form $\gamma=\tilde\gamma_{k_1}\gamma_{k_2}\tilde\gamma_{k_3}\cdots$ for some $k_\ell\in\{1,\dots,p\}$. Then $\mu_{\gamma|\Z_p}=0$ (i.e., the restriction of $\mu_\gamma$ to $\Z_p$ is $0$).
\end{lemma}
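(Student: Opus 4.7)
The plan is to verify that $\mu_\gamma(U)=0$ for every compact-open ball $U\subseteq \Z_p$. Under the identification $\meas_0(\P^1(\Q_p),\Z)\simeq \hc(\Z)$ of the preliminaries, such a $U$ corresponds to either $U_e$ for some $e\in\cE^+$ with $\gamma_e=\tilde\gamma_{i_1}\gamma_{j_1}\cdots\tilde\gamma_{i_n}\gamma_{j_n}$, or to $U_{\bar e}$ with $\gamma_e=\gamma_{j_1}\tilde\gamma_{i_2}\gamma_{j_2}\cdots\tilde\gamma_{i_n}\gamma_{j_n}$, by Corollary~\ref{cor: opens in Z_p}. Since $\mu_\gamma\in\cF_0(\cE,\Z)$ gives $\mu_\gamma(\bar e)=-\mu_\gamma(e)$, it suffices to prove $\mu_\gamma(e)=0$ in both cases.

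By definition \eqref{eq: definition of mu}, $\mu_\gamma(e)=\varphi_{h(\gamma,e)}$, where by \eqref{eq: definition of h} the element $h(\gamma,e)=\gamma_e\gamma\gamma_{\gamma^{-1}(e)}^{-1}$ lies in $\GMp$. Because $\GMp$ acts trivially on $\Z$, the cocycle $\varphi\in Z^1(\GMp,\Z)$ is actually a homomorphism and in particular $\varphi_1=0$. So it is enough to show that $h(\gamma,e)=1$, i.e.\ that the product $\gamma_e\gamma$ is already the radial representative of its own $\GMp$-coset.

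The core calculation is then a concatenation of words: in both shapes of $\gamma_e$ listed above the last factor is of $\gamma$-type (namely $\gamma_{j_n}$), while by hypothesis the first factor of $\gamma$ is of $\tilde\gamma$-type (namely $\tilde\gamma_{k_1}$). The junction $\gamma_{j_n}\tilde\gamma_{k_1}$ therefore preserves alternation, and $\gamma_e\gamma$ is again an alternating product of $\gamma_i$'s and $\tilde\gamma_i$'s with all indices in $\{1,\dots,p\}$.

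To conclude, one recognises that every such alternating word lies in $\cY$. Unwinding Definition~\ref{def: radial system}, the radial system is built recursively by prepending a factor $\gamma_i$ (condition~(\ref{item: rs 2})) or $\tilde\gamma_i$ (condition~(\ref{item: rs 3})) to an existing representative, according to the parity of the vertex one is leaving; so every alternating word in the $\gamma_i,\tilde\gamma_i$ with indices in $\{1,\dots,p\}$ is itself some $\gamma_{e'}\in\cY$. Hence $\gamma_e\gamma=\gamma_{e'}$, and the bijection $\GMp\backslash\Gamma\leftrightarrow\cE^+$ then forces $e'=\gamma^{-1}(e)$ and $h(\gamma,e)=1$, giving $\mu_\gamma(e)=0$. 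The main technical point is this last structural observation: one must verify that concatenating an element of $\cY$ ending in a $\gamma$-type letter with an alternating word beginning in a $\tilde\gamma$-type letter never needs further left-reduction by an element of $\GMp$. Once the recursive description of $\cY$ is made explicit the rest is formal bookkeeping.
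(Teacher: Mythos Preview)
Your argument is correct and follows the same route as the paper's own proof: reduce to showing $h(\gamma,e)=1$ by observing that the concatenation $\gamma_e\gamma$ is again an alternating word in the $\gamma_i,\tilde\gamma_i$ with indices in $\{1,\dots,p\}$, hence already lies in the radial system $\cY$, so that $\mu_\gamma(e)=\varphi_1=0$. The only difference is cosmetic: you treat both parts of Corollary~\ref{cor: opens in Z_p} (balls $U_e$ and balls $U_{\bar e}$ contained in $\Z_p$), whereas the paper only spells out the first, which already suffices since any compact-open in $\Z_p$ is a finite disjoint union of balls $U_e$ with $e$ even.
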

\begin{proof}
 It is enough to show that $\mu_{\gamma}(U_e)=0$ for all $U_e$ contained in $\Z_p$. By Corollary~\ref{cor: opens in Z_p} if $U_e=\gamma_e^{-1}\Z_p$ is contained in 
$ \Z_p$ then $\gamma_e=\tilde \gamma_{i_1}\gamma_{j_1}\cdots \tilde\gamma_{i_r}\gamma_{j_r}$ for some $i_s,j_s\in\{1,\dots,p\}$. Then we see that 
\begin{align*}
  \gamma_e \gamma = \tilde \gamma_{i_1}\gamma_{j_1}\cdots \tilde\gamma_{i_r}\gamma_{j_r} \tilde\gamma_{k_1}\gamma_{k_2}\tilde\gamma_{k_3}\cdots,
\end{align*}
from which we see that $\gamma_e\gamma$ belongs to our system of representatives $\mathcal{Y}$ for $\GMp\backslash \Gamma$. Therefore, from the identity $ \gamma_e\gamma = 1\cdot \gamma_e\gamma $
and the definition of $\mu$ (see \eqref{eq: definition of mu}) we obtain that $\mu_\gamma(U_e)=\varphi_1=0$.
\end{proof}
Let $r$ be a positive integer such that none of the elements $\tau_1,\tau_2,\omega_p\tau_1$, $\omega_p\tau_2$ is congruent to an integer modulo $p^r$, as in~\eqref{eq:r}. Consider a covering of $\P^1(\Q_p)$ of the form
\begin{align*}
  \mathbb{P}^1(\Q_p)=\bigsqcup_{t=0}^p\bigsqcup_{i_m,j_m}(\tilde\gamma_{i_1}\gamma_{j_1}\cdots \tilde\gamma_{i_n}\gamma_{j_n}\tilde\gamma_t)^{-1}\Z_p,
\end{align*}
with the $i_m,j_m$ varying over $\{1,\dots,p\}$, and such that every open has diameter  $\leq p^{-(r+1)}$.  Using this covering for breaking the integral \eqref{eq: int I} we are reduced to consider integrals of the form
\begin{align*}
  \Xint\times_{(\tilde\gamma_{i_1}\gamma_{j_1}\cdots \tilde\gamma_{i_n}\gamma_{j_n}\tilde\gamma_t)^{-1}\Z_p}\left(\frac{t-\tau_2}{t-\tau_1}\right)d\mu_g(t), \ \text{ for }g\in \Gamma
\end{align*}
and $t\in\{0,\dots,p\}$, $i_s,j_s\in\{1,\dots,p\}$.

To lighten the notation set $\alpha = \tilde\gamma_{i_1}\gamma_{j_1}\cdots \tilde\gamma_{i_n}\gamma_{j_n}\tilde\gamma_t$. Then we have that
\begin{align*}
  \Xint\times_{\alpha^{-1}\Z_p}\left(\frac{t-\tau_2}{t-\tau_1}\right)d\mu_g(t)=&  \Xint\times_{\Z_p}\left(\frac{\alpha^{-1}t-\tau_2}{\alpha^{-1}t-\tau_1}\right)d\mu_g(\alpha^{-1}t)=\Xint\times_{\Z_p}\left(\frac{\alpha^{-1}t-\tau_2}{\alpha^{-1}t-\tau_1}\right)d(\alpha \mu_{g})(t)\\
=& \Xint\times_{\Z_p}\left(\frac{\alpha^{-1}t-\tau_2}{\alpha^{-1}t-\tau_1}\right)d\mu_{\alpha g}(t) \div \Xint\times_{\Z_p}\left(\frac{\alpha^{-1}t-\tau_2}{\alpha^{-1}t-\tau_1}\right)d\mu_{\alpha}(t) \\ =& \Xint\times_{\Z_p}\left(\frac{\alpha^{-1}t-\tau_2}{\alpha^{-1}t-\tau_1}\right)d\mu_{\alpha g}(t), 
\end{align*}
where we have used the cocycle property of $\mu$ and the fact that $\mu_{\alpha|\Z_p}=0$ by Lemma \ref{lemma: previous lemma}. Therefore, letting $  \phi_0(t):=\left(\frac{\alpha^{-1}t-\tau_2}{\alpha^{-1}t-\tau_1}\right)$,  we have reduced the problem to compute integrals of the form
\begin{align}\label{eq: another int}
  \Xint\times_{\Z_p} \phi_0(t)d\mu_g, \text{ for } g \in \Gamma.
\end{align}
The next step is to express the above integrals in terms of integrals with respect to measures of the form $\mu_{g_0}$, where $g_0\in\GMp$. For instance, if we write $g=g_0\gamma$ with $g_0\in\GMp$ and $\gamma\in\mathcal{Y}$, Proposition \ref{prop: g in Gamma_0} below asserts that, under a certain condition on $\gamma$, we have an equality
\begin{align*}
  \Xint\times_{\Z_p}\phi_0(t)d\mu_{g}(t)=  \Xint\times_{\Z_p}\phi_0(t)d\mu_{g_0}(t).
\end{align*}
Recall that an edge $e\in\cE$ is said to be inward if $d(t(e),e_*)<d(s(e),e_*)$. Given $g\in\Gamma$  the edge $g^{-1}(e_*)$ is inward if and only if $g=g_0\gamma$ with $g_0\in\GMp$ and $\gamma\in\mathcal{Y}$ of the form
  \begin{align}\label{eq: condition gamma}
    \gamma=\tilde \gamma_t\gamma_{i_1}\tilde\gamma_{i_2}\cdots\ \  \text{ for some  } t,i_1,\dots,i_n\in\{1,\dots,p\}.
  \end{align}

\begin{proposition}\label{prop: g in Gamma_0}
  Let $g$ be an element in $ \Gamma$ such that $g^{-1}(e_*)$ is an inward edge. If $g=g_0\gamma$ with $\gamma$ as in \eqref{eq: condition gamma} then $\mu_{g|\Z_p}=\mu_{g_0|\Z_p}$.
\end{proposition}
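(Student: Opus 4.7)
The plan is to use the cocycle identity for $\mu$ together with Lemma~\ref{lemma: previous lemma} in an essentially mechanical way. Write $g = g_0 \gamma$, so that the cocycle property of $\mu \in Z^1(\Gamma, \meas_0(\PP^1(\Q_p),\Z))$ gives
\[
\mu_g \;=\; \mu_{g_0} \;+\; g_0 \cdot \mu_\gamma
\]
as measures on $\PP^1(\Q_p)$. Evaluating at a compact-open $U \subset \Z_p$, this reads
\[
\mu_g(U) - \mu_{g_0}(U) \;=\; (g_0 \cdot \mu_\gamma)(U) \;=\; \mu_\gamma(g_0^{-1} U).
\]
Hence it suffices to show $\mu_\gamma(g_0^{-1} U) = 0$ for every $U$ of the form $U_e \subset \Z_p$ with $e \in \cE^+$.

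Next I would observe that, since $g_0 \in \GMp = \operatorname{Stab}_\Gamma(e_*)$ and the $\GL_2(\Q_p)$-equivariant bijection $\cE \leftrightarrow \cB(\PP^1(\Q_p))$ sends $e_*$ to $\Z_p$, the element $g_0$ stabilizes $\Z_p$ as a subset of $\PP^1(\Q_p)$. Therefore $g_0^{-1} U_e \subseteq g_0^{-1} \Z_p = \Z_p$, so $g_0^{-1} U_e$ is itself a compact-open ball sitting inside $\Z_p$.

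The last step is to invoke Lemma~\ref{lemma: previous lemma}: the hypothesis on $\gamma$ in \eqref{eq: condition gamma}, namely $\gamma = \tilde\gamma_t \gamma_{i_1} \tilde\gamma_{i_2} \cdots$, is exactly the shape required there (an alternating word starting with a $\tilde\gamma$). Consequently $\mu_{\gamma|\Z_p} = 0$, which applied to the ball $g_0^{-1} U_e \subset \Z_p$ yields $\mu_\gamma(g_0^{-1} U_e) = 0$. Combined with the displayed identity above, this gives $\mu_g(U_e) = \mu_{g_0}(U_e)$ for every $U_e \subset \Z_p$, i.e.\ $\mu_{g|\Z_p} = \mu_{g_0|\Z_p}$, as required.

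There is no serious obstacle; the only point meriting care is the verification that $g_0$ preserves $\Z_p$ as a set, which follows immediately from $\operatorname{Stab}_\Gamma(e_*) = \GMp$ and the equivariance of $e \mapsto U_e$. The inward-edge hypothesis is used only through the fact that it guarantees a factorization $g = g_0 \gamma$ with $\gamma$ of the alternating form required to apply Lemma~\ref{lemma: previous lemma}.
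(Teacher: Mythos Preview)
Your argument is correct and follows essentially the same route as the paper's proof: apply the cocycle relation $\mu_g = \mu_{g_0} + g_0\mu_\gamma$, use that $g_0\in\GMp$ preserves $\Z_p$ so that $g_0^{-1}U\subset\Z_p$, and then invoke Lemma~\ref{lemma: previous lemma} to kill $\mu_\gamma$ on such opens. The only difference is that you spell out slightly more explicitly why $g_0$ stabilizes $\Z_p$.
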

\begin{proof}
By Lemma \ref{lemma: previous lemma} the measure $\mu_{\gamma}$ is $0$ when restricted to $\Z_p$. By the cocycle condition we have that $\mu_g= \mu_{g_0\gamma} =\mu_{g_0}+g_0\mu_{\gamma} $. Since $g_0\in \GMp$ if $U\subset\Z_p$ then $g_0^{-1}U\subset \Z_p$, so that $g_0\mu_\gamma(U)=\mu_\gamma(g_0^{-1}U)=0$ and we see that $g_0\mu_\gamma$ is $0$ when restricted to $\Z_p$.
\end{proof}
Suppose now that $g^{-1}(e_*)$ is outward, so that we can not directly apply Proposition \ref{prop: g in Gamma_0}. In this case observe that $(\tilde\gamma_ig)^{-1}(e_*)$ is inward for all $i\in\{1,\dots,p\}$. Thus we can write
\begin{align*}
  \Xint\times_{\Z_p}\phi_0(t)d\mu_g(t)&= \left(\Xint\times_{\mathbb{P}^1(\Q_p)\setminus\Z_p}\phi_0(t)d\mu_g(t)\right)^{-1}=\prod_{i=1}^p \left(\int_{\tilde\gamma_i^{-1}\Z_p}\phi_0(t)d\mu_g(t)\right)^{-1}\\ &= \prod_{i=1}^p \left(\int_{\Z_p}\phi_0(\tilde\gamma_i^{-1} t)d\mu_g(\tilde\gamma_i^{-1} t)\right)^{-1}=  \prod_{i=1}^p\left( \int_{\Z_p}\phi_0(\tilde\gamma_i^{-1} t)d\mu_{\tilde\gamma_i g}(t)\right)^{-1}
\end{align*}
and apply Proposition \ref{prop: g in Gamma_0} to each of the integrals in the last term. 

Summing up, we have expressed any integral as in \eqref{eq: int I} as a product of integrals of the form
\begin{align*}
  \Xint\times_{\Z_p}\phi_i(t)d\mu_g(t)\ \text{ for } g\in\GMp,
\end{align*}
where $\phi_i:=\phi_0(\tilde\gamma_i^{-1}t)$ for $i=0,1,\dots,p$.

Next, we show that the functions $\phi_i(t)$ are analytic on $\Z_p$, thanks to our choice of the covering of $\P^1(\Q_p)$. We begin by analyzing $\phi_0(t)$, since the result for the other $\phi_i(t)$ will follow easily from this case.

\begin{lemma}\label{lemma: the function is analytic}
 The function $\phi_0(t)=\frac{\alpha^{-1}t-\tau_2}{\alpha^{-1}t-\tau_1}$ is 
analytic on $\Z_p$ and has a series expansion of the form
  \begin{align}\label{eq:1}\displaystyle \frac{\alpha^{-1} t-\tau_2}{\alpha^{-1} t-\tau_1} = \alpha_0\left( 1+\sum_{n=1}^\infty \alpha_n p^{2n} t^n \right)\end{align} with the $\alpha_n$ belonging to $\cO_p$, the ring of integers of $K_p$, for all $n\geq 1$.
\end{lemma}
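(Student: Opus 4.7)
The plan is to compute the Taylor expansion of $\phi_0$ at $t=0$ in closed form and then bound the $p$-adic valuations of the coefficients using the hypotheses on $r$. Writing $\iota_p(\alpha^{-1})=\smtx{a}{b}{c}{d}$ with $ad-bc=1$, so that $\alpha^{-1}t=(at+b)/(ct+d)$, clearing the denominator gives
\[
\phi_0(t)=\frac{A_2\,t+B_2}{A_1\,t+B_1},\qquad A_i:=a-c\tau_i,\ B_i:=b-d\tau_i,
\]
which is itself a M\"obius function of $t$. Setting $\alpha_0:=\phi_0(0)=B_2/B_1$ and expanding by geometric series gives, for $n\geq 1$,
\[
\frac{c_n}{\alpha_0}=\left(-\frac{A_1}{B_1}\right)^{\!n-1}\cdot\frac{A_2B_1-A_1B_2}{B_1B_2},
\]
and the identity $A_2B_1-A_1B_2=(ad-bc)(\tau_2-\tau_1)=\tau_2-\tau_1$ provides a closed form for every Taylor coefficient. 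The lemma reduces to the inequality $v_p(c_n/\alpha_0)\geq 2n$ for every $n\geq 1$.

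The next step is to make $\iota_p(\alpha^{-1})$ explicit. Writing $\alpha=\tilde\gamma_{i_1}\gamma_{j_1}\cdots\tilde\gamma_{i_\nu}\gamma_{j_\nu}\tilde\gamma_t$ with $\nu$ pairs, and using the local forms from \eqref{eq: def of gammas} and \eqref{eq: w_p} to compute $\iota_p(\tilde\gamma_i)=p^{-1}\iota_p(\omega_p\gamma_i\omega_p)$, a telescoping computation shows that, up to left and right multiplication by elements of $\Gamma_0^{\text{loc}}(p)$,
\[
\iota_p(\alpha^{-1})=\begin{cases}\smtx{p^\nu}{X}{0}{1/p^\nu},\ v_p(X)=-\nu, & \text{if }t=0,\\ \smtx{0}{-1/p^{\nu+1}}{p^{\nu+1}}{d_0},\ v_p(d_0)=-\nu, & \text{if }t\geq 1.\end{cases}
\]
In either case the ball $\alpha^{-1}\Z_p$ has diameter $p^{-2\nu}$, so the assumption that every open of the cover has diameter $\leq p^{-(r+1)}$ forces $2\nu\geq r+1$.

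The valuation estimate rests on a single ultrametric observation. Set $\beta:=\alpha^{-1}(0)=b/d$ and $\delta_i:=\beta-\tau_i$, so that $B_i=d\,\delta_i$; then
\[
v_p(\tau_2-\tau_1)=v_p(\delta_1-\delta_2)\geq\min\bigl(v_p(\delta_1),v_p(\delta_2)\bigr).
\]
When $t=0$ one has $\beta=p^\nu X\in\Z_p$, and since $\tau_i$ is separated from every element of $\Z_p$ by more than $p^{-r}$ in $|\cdot|_p$, we obtain $v_p(\delta_i)\leq r-1$. Substituting the valuations of $A_i$ and $B_i$ into the closed form above, and using the ultrametric inequality to absorb one factor of $v_p(\delta_1)$ between the $v_p(\tau_2-\tau_1)$ and $v_p(B_1)$ terms, a direct computation yields
\[
v_p(c_n/\alpha_0)\geq n(2\nu-r+1)\geq 2n.
\]
When $t\geq 1$ one has $v_p(\beta)=-1$, so $\beta\in\P^1(\Q_p)\setminus\Z_p$; the corresponding bound is extracted from the hypothesis on $\omega_p\tau_i$, using that the M\"obius action of $\omega_p$ interchanges $\Z_p$ with $\P^1(\Q_p)\setminus\Z_p$ and converts $|\omega_p\tau_i-a|_p>p^{-r}$ for $a\in\Z_p$ (applied with $a=\omega_p\beta\in\Z_p$) into a bound on $v_p(\delta_i)$ to which the same valuation calculation then applies.

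I expect the main difficulty to lie in the $t\geq 1$ case when $v_p(\tau_i)\leq -1$, where the dominant term of $B_i$ can switch from $b$ to $d\tau_i$ and potential cancellations must be controlled by means of the $\omega_p\tau_i$ hypothesis; a sub-case analysis on the sign of $v_p(\tau_i)$ is needed. Once the coefficient bounds are in place, analyticity of $\phi_0$ on $\Z_p$ is automatic: the denominator $A_1t+B_1$ has its unique zero at $t=-B_1/A_1$, and the valuations computed above give $v_p(B_1/A_1)<0$, placing the pole outside $\Z_p$.
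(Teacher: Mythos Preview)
Your closed formula for the Taylor coefficients is correct, and the ultrametric trick $v_p(\tau_2-\tau_1)\geq\min(v_p(\delta_1),v_p(\delta_2))$ is exactly the right way to absorb the cross term. The difficulty is the normalization step. You assert that a ``telescoping computation'' puts $\iota_p(\alpha^{-1})$ into the displayed diagonal or antidiagonal shape \emph{up to left and right multiplication by $\Gamma_0^{\text{loc}}(p)$}, and then you read off $\beta$, $v_p(A_i)$, $v_p(B_i)$ from that shape. But those quantities are attached to the actual matrix $\smtx{a}{b}{c}{d}=\iota_p(\alpha^{-1})$, not to a representative of its double $\Gamma_0^{\text{loc}}(p)$-coset: for instance $\beta=\alpha^{-1}(0)=b/d$ changes when you multiply on the right, and the valuations of $A_i=a-c\tau_i$, $B_i=b-d\tau_i$ change under both left and right multiplication. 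So ``substituting the valuations of $A_i$ and $B_i$'' from the normalized matrix into your coefficient formula is not justified as written. (Two smaller points: your telescoping claim is delicate because the interspersed $\Gamma_0^{\text{loc}}(p)$ factors do not slide past the core matrices without altering the indices, so the specific assertion $v_p(X)=-\nu$ need not hold---only $v_p(X)\geq-\nu$ is forced by the ball having diameter $p^{-2\nu}$; and a left $\Gamma_0^{\text{loc}}(p)$ factor effectively replaces $\tau_i$ by $u_L^{-1}\tau_i$, for which the separation hypothesis~\eqref{eq:r} is not automatic when $v_p(\tau_i)<0$.)

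The paper closes this gap with a single observation that you are missing: if a function $\phi(t)$ admits an expansion of the shape~\eqref{eq:1}, then so does $\phi(\gamma t)$ for every $\gamma$ in the Iwahori $\mathfrak{J}=\{\smtx{a}{b}{c}{d}\in\GL_2(\Z_p):p\mid c\}$. Granting this, one only needs a \emph{right} normalization, and that comes for free from the geometry: since $\alpha^{-1}\Z_p$ is a ball of diameter $\leq p^{-(r+1)}$ inside $\Z_p$ (case $t=0$), there is $u_0$ stabilizing $\Z_p$ with $\alpha^{-1}u_0=\smtx{p^m}{i}{0}{1}$, and one simply expands $\phi_0(u_0t)=\frac{i-\tau_2}{i-\tau_1}\cdot\frac{1+p^m t/(i-\tau_2)}{1+p^m t/(i-\tau_1)}$ directly, using $v_p(i-\tau_j)\leq r-1$. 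Case $t\geq 1$ reduces to this by premultiplying by $\omega_p$, which is why the hypothesis~\eqref{eq:r} is imposed on $\omega_p\tau_j$ as well. If you insert the $\mathfrak{J}$-invariance step into your argument and then apply your coefficient formula to the normalized matrix, your computation collapses to the paper's; without it, the valuation bounds on $A_i,B_i$ are not under control.
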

\begin{proof}
  Let $\mathfrak{J}=\{\smtx a b c d \in\GL_2(\Z_p)\colon p\mid c\}$,
which is the stabilizer of $\Z_p$ under the action $\GL_2(\Q_p)$ in the set of balls of $\P^1(\Q_p)$. Observe that if a function $\phi(t)$ satisfies the conclusions of the lemma, then also $\phi(\gamma t)$ does for all $\gamma\in \mathfrak{J}$. There are two cases to consider:
  \begin{enumerate}
  \item $\alpha^{-1}\Z_p$ is contained in $\Z_p$. Then, since $\alpha^{-1}\Z_p$ is a ball of diameter $ p^{-(r+1)}$ we find that
    \begin{align*}
      \alpha^{-1}\Z_p = \smtx{p^{r+1}}{i}{0}{1}\Z_p, \text{ for some } i\in\Z.
    \end{align*}
Therefore $\alpha^{-1}u_0=\smtx{p^{r+1}}{i}{0}{1}$ for some $u_0\in\mathfrak{J}$. Then, by our previous remark we can replace $t$ by $u_0 t$, and we find that
\begin{align*}
 \phi_0(u_0t)= \frac{\alpha^{-1}u_0 t-\tau_2}{\alpha^{-1}u_0 t-\tau_1} = \frac{\smtx{p^{r+1}}{i}{0}{1}t-\tau_2}{\smtx{p^{r+1}}{i}{0}{1} t-\tau_1}=\frac{(i-\tau_2)}{(i-\tau_1)}\frac{(1+\frac{p^{r+1}}{i-\tau_2}t)}{(1+\frac{p^{r+1}}{i-\tau_1}t)}.
\end{align*}
Now the key point is that by our choice of $r$ in \eqref{eq:r} we have that $v_p(i-\tau_2)< r$, so that $v_p\left(\frac{p^{r+1}}{i-\tau_j} \right)\geq 2$, and we result follows by taking the power series expansion in the above expression.
  \item $\alpha^{-1}\Z_p$ is contained in $\mathbb{P}^1(\Q_p)\setminus \Z_p$. In this case observe that $\omega_p\alpha^{-1}\Z_p\subset \Z_p$. Therefore 
    \begin{align*}
        \frac{\alpha^{-1} t-\tau_2}{\alpha^{-1} t-\tau_1} =   \frac{\omega_p\alpha^{-1} t-\omega_p\tau_2}{\omega_p\alpha^{-1} t-\omega_p\tau_1}, 
    \end{align*}
and the argument is exactly the same as before by noting that $\omega_p\alpha^{-1}\Z_p$ is of diameter $p^{-(r+1)}$ and therefore $\omega_p\alpha^{-1}=\smtx{p^{r+1}}{i}{0}{1}$ for some $i$, and that our choice of $r$ also works well for $\omega_p\tau_1$ and $\omega_p\tau_2$.
  \end{enumerate}
\end{proof}
\begin{proposition}\label{prop: the function is analytic}
For every $i=0,1,\dots,p$ the function $\phi_i(t)=\phi_0(\tilde\gamma_i^{-1}t)$ is 
analytic on $\Z_p$ and has a series expansion of the form
  \begin{align}\label{eq:2}\displaystyle \frac{\alpha^{-1} t-\tau_2}{\alpha^{-1} t-\tau_1} = \alpha_0\left( 1+\sum_{n=1}^\infty \alpha_n p^{n} t^n \right)\end{align} with the $\alpha_n$ belonging to $\cO_p$, the ring of integers of $K_p$, for all $n\geq 1$.
\end{proposition}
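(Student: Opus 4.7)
The plan is to reduce the claim to Lemma~\ref{lemma: the function is analytic} by recognising $\phi_i$ as a function of the same shape as $\phi_0$. Setting $\beta = \tilde\gamma_i\alpha$, one has
\[
\phi_i(t) = \phi_0(\tilde\gamma_i^{-1}t) = \frac{\beta^{-1}t - \tau_2}{\beta^{-1}t - \tau_1},
\]
which has exactly the form treated in the Lemma, with $\beta$ playing the role of $\alpha$. The case $i=0$ is immediate, since $\tilde\gamma_0 = 1$ gives $\phi_i = \phi_0$ and the Lemma already yields the stronger bound $p^{2n}$.

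For $i\geq 1$, the strategy is to re-run the proof of the Lemma with $\beta$ in place of $\alpha$. The only input that changes is the bound on the size of the ball $\beta^{-1}\Z_p$ (in either of the two cases distinguished in the Lemma, namely $\beta^{-1}\Z_p \subset \Z_p$ or $\omega_p\beta^{-1}\Z_p\subset\Z_p$). The key estimate I need to verify is that this ball has diameter at most $p^{-r}$, one power of $p$ weaker than the $p^{-(r+1)}$ bound available for $\alpha^{-1}\Z_p$. This loss of one power of $p$ is explained by the identity $\tilde\gamma_i = p^{-1}\omega_p\gamma_i\omega_p$: after unpacking the local matrix forms \eqref{eq: def of gammas} and \eqref{eq: w_p}, the factor $p^{-1}$ translates into the fact that the action of $\tilde\gamma_i^{-1}$ distorts the relevant diameter by at most a factor of $p$. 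Equivalently, in tree-theoretic terms, $\tilde\gamma_i$ belongs to $\GMhat = \operatorname{Stab}_\Gamma(\hat v_*)$ but not to $\GMp$, so it fixes $\hat v_*$ while moving $e_*$ to an adjacent edge; the triangle inequality in the Bruhat--Tits tree then yields
\[
d(e_*,\beta^{-1}(e_*)) \geq d(e_*,\alpha^{-1}(e_*)) - d(e_*,\tilde\gamma_i^{-1}(e_*)) \geq (r+1)-1 = r.
\]

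With the diameter bound in hand, the proof of Lemma~\ref{lemma: the function is analytic} goes through verbatim with $p^{r+1}$ replaced by $p^r$ throughout: I would write $\beta^{-1}u_0 = \smtx{p^r}{j}{0}{1}$ for suitable $u_0\in\mathfrak{J}$ and $j\in\Z$; condition~\eqref{eq:r} still forces $v_p(j-\tau_\ell), v_p(j-\omega_p\tau_\ell) < r$, so the ratios $\frac{p^r}{j-\tau_\ell}$ and $\frac{p^r}{j-\omega_p\tau_\ell}$ lie in $p\,\cO_p$, and expanding the resulting geometric series produces the required expression $\alpha_0\bigl(1+\sum_{n\geq 1}\alpha_n p^n t^n\bigr)$ with $\alpha_n\in\cO_p$. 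The main obstacle is cleanly establishing the diameter estimate for $\beta^{-1}\Z_p$; everything else is a direct repetition of the Lemma's computation.
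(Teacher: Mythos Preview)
Your approach is correct but follows a genuinely different route from the paper's. Rather than re-running the Lemma's argument with the modified element $\beta=\tilde\gamma_i\alpha$ and a weakened diameter bound, the paper uses Lemma~\ref{lemma: the function is analytic} as a black box and exploits the extra power of $p$ already built into it. Concretely, the paper observes that $\tilde\gamma_i$ is locally (up to an element of $\mathfrak J$) the matrix $\smtx{-i}{1/p}{p}{0}$, so that one may take $\phi_i(t)=\phi_0\bigl(\tfrac{-1/p}{pt-i}\bigr)$. Since $\tfrac{-1/p}{pt-i}$ lies in $p^{-1}\cO_p$ for $t\in\Z_p$ and the Lemma furnished $\phi_0(s)=\alpha_0\bigl(1+\sum_{n\ge 1}\alpha_n p^{2n}s^n\bigr)$ with the stronger coefficient $p^{2n}$, direct substitution costs at most a factor of $p^{n}$ and still leaves $p^{n}$, which is exactly the claim. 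This is essentially a two-line proof, and it explains why the Lemma was stated with $p^{2n}$ rather than the $p^n$ that would have sufficed for its own statement.

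Your route has the merit of being more structural---you never unwrap the explicit local form of $\tilde\gamma_i$---but the diameter estimate you flag as ``the main obstacle'' genuinely requires care: the tree-distance inequality $d(e_*,\beta^{-1}(e_*))\ge r$ does not by itself guarantee that $\beta^{-1}\Z_p$ (or its $\omega_p$-translate) is a ball contained in $\Z_p$ of the right radius, since the edge $\beta^{-1}(e_*)$ could in principle sit on the wrong side of $e_*$; one must also track orientations to rule this out. The paper's substitution argument sidesteps this bookkeeping entirely.
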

\begin{proof}
The result is clear for $i=0$. For $i>0$ observe that $\tilde\gamma_i$ is (up to an element in $\mathfrak{J}$) locally of the form $\smtx{-i}{1/p}{p}{0}$. Thus we can assume that 
\begin{align}
\phi_i(t)=\phi_0\left(\frac{-1/p}{pt-i}\right) \end{align}
and the result follows directly from Lemma \ref{lemma: the function is analytic} (note the factor $p^{2n}$ in the series expansion \eqref{eq:1}).
\end{proof}

At this point, we have reduced to compute integrals of the form
\begin{align}\label{eq: int I last red}
  I=\Xint\times_{\Z_p}\phi(t)d\mu_g(t), \text{ where } g\in\GMp \text{ and } \phi(t)=\alpha_0\left( 1+\sum_{n=1}^\infty \alpha_n p^{n} t^n \right).
\end{align}
Let $\log$ be the unique homomorphism $\log\colon K_p^\times\ra K_p$ such that $\log(1-t)=-\sum_{n=1}^\infty t^n/n$ and 
$\log(p)=0$. Its kernel is $p^\Z\times \mathbb{U}$, where $\mathbb{U}$ denotes the group of roots of unity in 
$K_p^\times$. Observe that the series of $\phi(t)$ converges for $t\in\Z_p$ and is constant modulo $p^{v_p(\alpha_0)+1}$. Thus the integral $I$ of \eqref{eq: int I last red}
can be computed as
\begin{align*}
 I= p^{v_p(\alpha_0)}\cdot \zeta \cdot \exp(\log I),
\end{align*}
where $\zeta$ is the Teichm\"uller lift of the unit part of $I$ modulo $p$, which can be computed as the Riemann product in the covering of $\Z_p$ by balls of diameter $p^{-1}$. Therefore, it only remains to compute the logarithm of $I$, which is the additive integral
\begin{align}\label{eq: additive int}
 \log I =\int_{\Z_p}\log \phi(t)d\mu_g(t).
\end{align}
Observe that $\log\phi(t)$ is analytic on $\Z_p$ and it has a series expansion of the form
\begin{align*}\label{eq: logarithm}
 \log\left(\phi(t)\right)=\beta_0+\sum_{n=1}^\infty \frac{\beta_n}{n} p^n t^n,\ \text{ with } \beta_i\in \cO_p.
\end{align*}
Let ${\mu_g}_{|_{\Z_p}}$ denote the measure on $\Z_p$ obtained by restriction of $\mu_g$, and let $\omega_g(n)$ denote its 
$n$-th moment:
\[
 \omega_g(n)=\int_{\Z_p}t^nd\mu_g(t).
\]
We see that the additive integral of \eqref{eq: additive int}
can be 
expressed as
\begin{align}\label{eq: integral in terms of moments}
 \beta_0\omega_g(0)+\sum_{n\geq 1}\frac{p^n}{n}\beta_n\omega_g(n)
\end{align}
for some $\beta_n\in\cO_p$. Now, suppose that we want to evaluate \eqref{eq: integral in terms of 
moments} modulo $p^M$; i.e., we want to compute the first $M$ $p$-adic digits of \eqref{eq: integral in 
terms of moments}. For this it is enough to compute, for each $i=0,1,\dots, M'$, the moment
$
 \omega_g(i)$ to an accuracy of $p^{M''-i}$, where
 \[
  M'=\sup\{n\colon \ord_p(p^n/n)<M\}\text{ and } \ M''=M+[\log(M')/\log(p)].
 \]
Summing up, we have reduced the problem of computing integrals as in \eqref{eq: int I} to that of
computing moments of the form 
\begin{align}\label{eq: the moments we need}
 \omega_g(i)=\int_{\Z_p}t^n d\mu_g(t)\pmod{p^{M''-i}}\ \ \text{ for $g\in\GMp$ and $i=0,\dots,M'$}.
\end{align}
In the next subsection we present an algorithm for computing the moments \eqref{eq: the moments we need} based on overconvergent cohomology.

\subsection{Computing the moments via overconvergent cohomology}\label{sec: overconvergent cohomology}
We present an algorithm for efficiently computing the moments $\omega_g(n)=\int_{\Z_p}t^nd\mu_g(t)$ for $g\in\GMp$,
based on
 the overconvergent cohomology methods of Pollack--Pollack \cite{pollack-pollack}. We begin by slightly adapting the
lifting
results
of~\cite[\S3]{pollack-pollack} (because we need to lift cocycles rather than just cohomology classes), and  then we will
show how to compute the moments $\mu$ by means of the lifted overconvergent cocycles.

Consider the module $\cD$ of locally-analytic $\Z_p$-valued
distributions on $\Z_p$. That is to say, given a distribution $\nu\in\cD$ and a  locally analytic function
$h:\Z_p\ra
\Z_p$ we have that $\nu(h(t))\in\Z_p$, and the map $h(t)\mapsto \nu(h(t))$ is linear and continuous.
Let $\Sigma_0(p)$ be the subsemigroup of $B^\times$
\[
 \Sigma_0(p) = \iota_p^{-1}\left(\{\smtx a b c d\in
\M_2(\Z_p)\colon c\equiv 0\pmod p, \ d\in\Z_p^\times,\
ad-bc\neq 0\}\right).
\]
It acts on the left on  $\cD$ as follows: if $h(t)$ is a locally analytic function on $\Z_p$ then
\[
(\gamma\cdot\nu)(h(t)) = \nu(h(\gamma\cdot t)),\quad \text{for } \nu\in\cD,\gamma\in\Sigma_0(p),
\]
where 
\[
 \gamma\cdot t=\frac{at+b}{ct+d}\ \ \text{ if } \iota_p(\gamma)=\mtx a b c d.
\]
The element $\pi\in
R_0(pM)$ defined in~\eqref{eq: def of s_i} lies in $\Sigma_0(p)$ and the double coset $\GMp\pi \GMp$ induces the
$U_p$-operator on the cocycles $Z^1(\GMp,\cD)$ and on  $H^1(\GMp,\cD)$. On cocycles, it is given explicitly by formula \eqref{eq:
def U_p}.

The module $\cD$ is equipped with the decreasing filtration
\[
\Fil^n\cD = \{\nu\in\cD\colon \nu(1) = 0, \quad \nu(t^{i})\in p^{n-i+1}\Z_p,\quad\forall i\geq 1\},
\]
which enjoys the following key properties:
\begin{lemma}
\begin{enumerate}
\item The natural projection $\cD\ra \displaystyle\varprojlim_n \cD/\Fil^n\cD$ is an isomorphism.
 \item If $\nu\in \Fil^n\cD$ then $\pi\cdot\nu\in\Fil^{n+1}\cD$.
\end{enumerate}
\end{lemma}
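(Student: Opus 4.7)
The plan is to handle the two parts separately, using that $\cD$ is determined by its moments $\{\nu(1), \nu(t), \nu(t^2),\ldots\}$ and that the definition of $\Fil^n\cD$ is designed precisely to control those moments modulo successively higher powers of $p$.

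For part (1), I would first argue injectivity by observing that the kernel of $\cD \to \varprojlim_n\cD/\Fil^n\cD$ equals $\bigcap_n \Fil^n\cD$. An element $\nu$ of this intersection satisfies $\nu(1)=0$ and $\nu(t^i)\in\bigcap_n p^{n-i+1}\Z_p = 0$ for every $i\geq 1$; since $\nu$ vanishes on all monomials, the fact that these form a (topological) basis of the test-function space forces $\nu=0$. For surjectivity, a coherent sequence $(\bar\nu_n)_n$ yields for each $i\geq 0$ a compatible system of residues in $\Z_p/p^{n-i+1}\Z_p$ (constant in $n$ once $n\geq i$), and hence an element $a_i\in \Z_p$. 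One then defines a distribution $\nu$ by the rule $\nu(t^i)=a_i$, extended by $\Z_p$-linearity and continuity to all test functions, and checks that its class modulo $\Fil^n$ agrees with $\bar\nu_n$ for every $n$.

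For part (2), I would exploit the explicit local shape $\iota_p(\pi)=\smtx{p}{0}{0}{1}u_\pi$ with $u_\pi=\smtx{a}{b}{c}{d}\in \Gamma_0^{\text{loc}}(p)$ (so $c\in p\Z_p$, $d\in\Z_p^\times$, $ad-bc=1$). Writing $s(t)=u_\pi\cdot t=\frac{at+b}{ct+d}$, a direct calculation gives $\pi\cdot t=p\cdot s(t)$ with $s(t)=b/d+\sum_{k\geq 1}(-c)^{k-1}d^{-(k+1)}t^k$, so the $k$-th Taylor coefficient of $s$ has $v_p\geq k-1$. Expanding $s(t)^i=\sum_{j\geq 0}\tilde c_j t^j$, each monomial in the expansion is a product of $i$ coefficients of $s$ whose indices sum to $j$; using $v_p\geq k-1$ on the nonconstant factors shows $v_p(\tilde c_j)\geq \max(0,j-i)$. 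Now computing
\[
\pi\nu(t^i) \;=\; \nu\bigl((\pi t)^i\bigr) \;=\; p^i\sum_{j\geq 0}\tilde c_j\,\nu(t^j),
\]
the $j=0$ term vanishes since $\nu(1)=0$, and for $j\geq 1$ the hypothesis $\nu\in\Fil^n\cD$ gives $v_p(\nu(t^j))\geq \max(0,n-j+1)$. A case check (separating $j\leq i$ and $j>i$, and $j\leq n+1$ versus $j>n+1$) shows that each summand has $p$-adic valuation at least $n+1$, so $\pi\nu(t^i)\in p^{n+1}\Z_p\subseteq p^{n-i+2}\Z_p$ for every $i\geq 1$. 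Together with $\pi\nu(1)=\nu(1)=0$, this places $\pi\nu$ in $\Fil^{n+1}\cD$.

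The routine part is the bookkeeping in (2); the main conceptual point I would need to be careful about is (1), specifically the surjectivity claim. The subtlety is that for $i>n+1$ the condition defining $\Fil^n\cD$ is vacuous, so the quotients $\cD/\Fil^n\cD$ truly recover only finitely many moments (with increasing precision as $n$ grows), and I must check that the distribution I reconstruct from a coherent system of moments really lies in $\cD$ as defined, rather than in some larger ambient space. This amounts to using that $\cD$ has been set up (via the Amice transform or the moment description) to be isomorphic to $\prod_{i\geq 0}\Z_p$ through the moment map, which is exactly the identification that the two parts of the lemma together establish.
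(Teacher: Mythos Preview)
Your argument is correct and follows essentially the same route as the paper: for (1) you show $\bigcap_n\Fil^n\cD=0$ via the moment description, and for (2) you expand $\pi\cdot t=p\,s(t)$ with $s\in\Z_p[[t]]$ and track valuations of the coefficients of $s(t)^i$. Your treatment is in fact more complete than the paper's, which only sketches injectivity in (1) and leaves the surjectivity implicit; your explicit valuation bound $v_p\bigl((\pi\nu)(t^i)\bigr)\geq n+1$ is also slightly sharper than the $n-i+2$ the paper records.
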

\begin{proof} If $\nu$ lies in $\Fil^n\cD$ for all $n$ then it is
necessarily the $0$ distribution, and this gives the first
property. As for the second, we shall see that $\pi\cdot \nu (t^i)=\nu(\pi\cdot t^i)$ lies in $p^{n-i+2}\Z_p$
whenever $\nu\in\Fil^n\cD$. Recall
that $\pi=\smtx p 0 0 1 u_\pi$ for some $u_\pi\in\Gamma_0^{\text{loc}}(p)$, say 
$u_\pi=\smtx{a}{b}{pc}{d}=\frac{1}{d}\smtx{a_0}{b_0}{pc_0}{1}$. Then
\[
 \pi\cdot t^i=p\left(u_\pi\cdot t^i\right)=\frac{a_0pt^i+b_0p}{c_0pt^i+1}=\sum_{j\geq 0}e_j(pt^i)^j,
\]
where the $e_j\in\Z_p$ arise from the series expansion of $1/(c_0pt^i+1)$.  Since $\nu\in
\Fil^n\cD$ we have that
\[
 \nu(1)=0 \ \text{ and } \ \nu(t^i)\in p^{n-i+1}\Z_p \ \text{ for all } i\geq 1,
\]
which implies that $\nu(\pi\cdot t^i)$ belongs to $p^{n-i+2}\Z_p$.
\end{proof}
Thanks to these two properties we are in the setting of \cite[\S3]{pollack-pollack}, in which very general
lifting theorems for cohomology classes hold. However, we will need the following slightly refined version of
\cite[Theorem 3.1]{pollack-pollack}, as we are interested in lifting cocycles rather than cohomology classes. 
\begin{proposition}\label{prop: analogous to PP}
 Let $\theta_0\in Z^r(\GMp,\cD/\Fil^0\cD)$ be an element such that $\opU_p
\theta_0=\alpha\theta_0$ for some
$\alpha\in \Z_p^\times$. Then there exists $\Theta\in Z^r(\GMp,\cD)$ such that:
\begin{enumerate}
 \item The image of $\Theta$ in $Z^r(\GMp,\cD/\Fil^0\cD)$ is equal to $\theta_0$ (i.e., $\Theta$ is a lift of
$\theta_0$), and
\item $\Theta$ is an eigen-cocycle for $\opU_p$ with eigenvalue $\alpha$.
\end{enumerate}
Moreover, if $\Theta'\in Z^r(\GMp,\cD)$ is another cocycle that lifts $\theta_0$ such that
$\opU_p\Theta'=\alpha\Theta'$ 
then $\Theta'=\Theta$.
\end{proposition}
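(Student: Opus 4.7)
The plan is to construct $\Theta$ as a fixed point of the contraction $T := \alpha^{-1}\opU_p$ on $C^r(\GMp,\cD)$, exploiting the fact that $\opU_p$ raises the filtration $\{\Fil^n\cD\}$ by one while $\alpha\in\Z_p^\times$ is a unit.

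First I would pick, for each $r$-tuple in $\GMp$, an arbitrary preimage of the value of $\theta_0$ under the surjection $\cD\twoheadrightarrow\cD/\Fil^0\cD$, producing a cochain lift $\tilde\theta\in C^r(\GMp,\cD)$. This $\tilde\theta$ need not be either a cocycle or a $\opU_p$-eigencochain, but by the hypotheses on $\theta_0$ both $d\tilde\theta$ and $(\opU_p-\alpha)\tilde\theta$ take values in $\Fil^0\cD$, since $d\theta_0=0$ and $\opU_p\theta_0=\alpha\theta_0$ modulo $\Fil^0\cD$.

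Next I would set $\Theta:=\lim_{n\to\infty}T^n\tilde\theta$. Convergence follows from the telescoping identity
\[
T^{n+1}\tilde\theta-T^n\tilde\theta=T^n\bigl(\alpha^{-1}(\opU_p-\alpha)\tilde\theta\bigr)
\]
together with the preceding lemma: since each $s_i$ appearing in $(\opU_p f)(g)=\sum_i s_if(t_i(g))$ has local form $\iota_p(s_i)=\smtx{p}{-i}{0}{1}u_i'$ with $u_i'\in\Gamma_0^{\text{loc}}(p)$, the proof of the lemma applies verbatim to show $s_i\cdot\Fil^n\cD\subseteq\Fil^{n+1}\cD$; combined with $\alpha\in\Z_p^\times$, the $n$-th difference lies in $\Fil^n\cD$. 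By the isomorphism $\cD\cong\varprojlim_n\cD/\Fil^n\cD$, the telescoping sum converges to a well-defined $\Theta\in C^r(\GMp,\cD)$. By construction $T\Theta=\Theta$, i.e.\ $\opU_p\Theta=\alpha\Theta$; and since every partial difference lies in $\Fil^0\cD$, we have $\Theta\equiv\tilde\theta\equiv\theta_0\pmod{\Fil^0\cD}$, so $\Theta$ lifts $\theta_0$.

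To verify $d\Theta=0$, I would use that $d$ commutes with $\opU_p$ and hence with $T$, so $d\Theta=\lim_n T^n d\tilde\theta$; since $d\tilde\theta$ already takes values in $\Fil^0\cD$, the iterates $T^n d\tilde\theta$ take values in $\Fil^n\cD$, and the limit vanishes. Uniqueness follows from the same contraction principle: if $\Theta'$ is another eigen-cocycle lift, then $\Xi:=\Theta-\Theta'$ lies in $Z^r(\GMp,\Fil^0\cD)$ and satisfies $T\Xi=\Xi$, hence $\Xi=T^n\Xi\in\Fil^n\cD$ for every $n$, forcing $\Xi=0$ by the first part of the lemma. The only genuine technical point is the filtration-raising behaviour of each $s_i$, which is a routine extension of the proof of the preceding lemma; the rest of the argument is a clean contraction-mapping construction.
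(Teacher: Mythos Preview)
Your proof is correct and follows essentially the same approach as the paper: both take an arbitrary cochain lift $\tilde\theta$ of $\theta_0$, iterate $\alpha^{-1}\opU_p$, and use the filtration-raising property of $\opU_p$ (the paper's Lemma~\ref{lemma: Up increases order}) to obtain convergence in $\varprojlim_n \cD/\Fil^n\cD$, with uniqueness deduced from the vanishing of $\opU_p$-eigen-elements lying in $\Fil^0\cD$ (the paper's Lemma~\ref{lemma: kernel of reduction}). Your contraction-mapping phrasing and the paper's explicit inverse-limit construction are just two presentations of the same argument.
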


As in \cite{pollack-pollack}, before proving this we state two lemmas that are, in fact, key to the proof.
\begin{lemma}\label{lemma: Up increases order}
 If $\theta\in C^r(\GMp,\Fil^n\cD)$ then $\opU_p \theta$ lies in $C^r(\GMp,\Fil^{n+1}\cD)$.
\end{lemma}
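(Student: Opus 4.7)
The plan is to reduce the lemma to the purely local fact that each coset representative $s_i$ shifts the filtration by one step, exactly parallel to the argument given for $\pi$ in the preceding lemma. Since the Hecke operator $\opU_p$ on $r$-cochains arises from the double coset decomposition $\GMp \pi \GMp = \sqcup_{i=1}^p s_i \GMp$ of \eqref{eq: double coset U_p}, each value $(\opU_p \theta)(g_1, \ldots, g_r)$ is a sum of the form $\sum_{i=1}^p s_i \cdot \theta(\cdots)$ in which each inner value lies in $\Fil^n \cD$. It therefore suffices to show that $s_i \cdot \Fil^n \cD \subseteq \Fil^{n+1}\cD$ for every $i = 1, \ldots, p$.

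To prove this I would imitate the argument already given for $\pi$, using the local form $\iota_p(s_i) = \smtx{p}{-i}{0}{1} u_i'$ from \eqref{eq: local s_i} with $u_i' \in \Gamma_0^{\text{loc}}(p)$, together with the factorization $\smtx{p}{-i}{0}{1} = \smtx{1}{-i}{0}{1} \smtx{p}{0}{0}{1}$. First I would show that the action of $u_i'$ preserves $\Fil^n \cD$. Writing $u_i' = \smtx{a}{b}{pc}{d}$ with $d \in \Z_p^\times$, expansion of $u_i' \cdot t = (at+b)/(pct+d)$ as a power series in $t$ shows that its $k$-th Taylor coefficient lies in $p^{k-1}\Z_p$ for $k \geq 1$, and consequently $(u_i' \cdot t)^j$ has $N$-th Taylor coefficient in $p^{\max(0,\, N-j)}\Z_p$. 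Applying $\nu \in \Fil^n \cD$ term-by-term and using $\nu(1) = 0$ then yields a value in $p^{n-j+1}\Z_p$, as required. Next I would treat the factor $\smtx{p}{-i}{0}{1}$, which acts by $t \mapsto pt - i$; applying $\nu$ to $(pt - i)^j$ produces a sum whose $k$-th term carries a factor $p^k$ coming from the expansion of $(pt)^k$, which supplies exactly the one-step filtration shift from $\Fil^n$ to $\Fil^{n+1}$.

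Composing these two steps yields $s_i \cdot \nu \in \Fil^{n+1} \cD$ for every $\nu \in \Fil^n \cD$, and summing the $p$ contributions in $\opU_p \theta$ keeps the result in $\Fil^{n+1} \cD$. The main (and essentially only) subtlety is the filtration-preservation under $u_i'$: the apparent loss of filtration exponent from $n - k + 1$ to $n - j + 1$ when $k > j$ must be compensated by the factor $p^{k-j}$ hidden in the Taylor coefficients of $(u_i' \cdot t)^j$, whose presence depends crucially on the fact that $u_i'$ lies in $\Gamma_0^{\text{loc}}(p)$ and not merely in $\SL_2(\Z_p)$. This is a routine but slightly delicate $p$-adic valuation count, and is the main nontrivial ingredient in the argument.
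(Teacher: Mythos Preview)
Your argument is correct. The paper itself does not give a proof here but simply cites \cite[Lemma~3.3]{pollack-pollack}; what you have written is essentially the content of that reference specialized to the present situation, so there is no discrepancy in approach.

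One small streamlining is possible. Rather than verifying the filtration shift for each $s_i$ separately via its local form, you can observe that your computation for $u_i'\in\Gamma_0^{\text{loc}}(p)$ already shows that $\GMp$ (and more generally $\Gamma_0^{\text{loc}}(p)$) preserves each $\Fil^n\cD$. Since every $s_i$ lies in the double coset $\GMp\pi\GMp$, write $s_i=g_i\pi h_i$ with $g_i,h_i\in\GMp$; then $s_i\cdot\Fil^n\cD\subseteq g_i\cdot\pi\cdot\Fil^n\cD\subseteq g_i\cdot\Fil^{n+1}\cD\subseteq\Fil^{n+1}\cD$, using the preceding lemma for the middle inclusion. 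This is how the Pollack--Pollack framework is organized, and it lets you invoke the $\pi$-computation already done rather than redoing it for each representative. Your direct route is equally valid and only marginally longer.
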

\begin{proof}
 This is identical to the proof of \cite[Lemma 3.3]{pollack-pollack}. 
\end{proof}

\begin{lemma}\label{lemma: kernel of reduction}
 If $\theta$ lies  in the kernel of $Z^r(\GMp,\cD)\ra Z^r(\GMp,\cD/\Fil^0\cD)$ and $\opU_p\theta=\alpha\theta$ for
some $\alpha\in \Z_p^\times$, then $\theta=0$.
\end{lemma}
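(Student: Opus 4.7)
The plan is to combine the two preceding lemmas (Lemma \ref{lemma: Up increases order} and the filtration property that $\cD \cong \varprojlim_n \cD/\Fil^n\cD$) with the fact that $\alpha \in \Z_p^\times$ is a unit, in order to show that $\theta$ must lie in $\Fil^n\cD$ for every $n \geq 0$, hence is zero.

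More precisely, the hypothesis that $\theta$ lies in the kernel of the reduction map $Z^r(\GMp,\cD) \to Z^r(\GMp, \cD/\Fil^0\cD)$ says exactly that $\theta$ takes values in $\Fil^0\cD$, i.e., $\theta \in C^r(\GMp, \Fil^0\cD)$. Applying Lemma \ref{lemma: Up increases order} once shows that $\opU_p\theta \in C^r(\GMp, \Fil^1\cD)$. Since $\opU_p\theta = \alpha\theta$ and $\alpha\in\Z_p^\times$, we can invert $\alpha$ to obtain $\theta = \alpha^{-1}\opU_p\theta \in C^r(\GMp, \Fil^1\cD)$. Iterating, by induction on $n$ we conclude that
\[
\theta = \alpha^{-n}\opU_p^n \theta \in C^r(\GMp, \Fil^n\cD)
\]
for every $n \geq 0$.

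Finally, for each $g \in (\GMp)^r$, the value $\theta(g)$ lies in $\bigcap_{n\geq 0} \Fil^n\cD$. By the first property in the preceding lemma, the natural projection $\cD \to \varprojlim_n \cD/\Fil^n\cD$ is an isomorphism, which forces $\bigcap_{n\geq 0} \Fil^n\cD = 0$. Hence $\theta(g) = 0$ for all $g$, i.e., $\theta = 0$. There is no real obstacle here; the argument is a standard successive-approximation/contraction argument, and the role of $\alpha$ being a unit is only to allow us to transfer the $\opU_p$-improvement back to $\theta$ itself at each step.
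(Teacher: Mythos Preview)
Your proof is correct and follows essentially the same argument as the paper: both translate the kernel hypothesis into $\theta$ taking values in $\Fil^0\cD$, then iterate $\theta=\alpha^{-n}\opU_p^n\theta$ together with Lemma~\ref{lemma: Up increases order} to force $\theta$ into every $\Fil^n\cD$, and conclude $\theta=0$ from the separatedness of the filtration. The only difference is that you spell out the final step (that $\bigcap_n \Fil^n\cD=0$ follows from $\cD\cong\varprojlim_n\cD/\Fil^n\cD$) a bit more explicitly than the paper does.
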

\begin{proof}
 That $\theta$ lies in the kernel of $Z^r(\GMp,\cD)\ra Z^r(\GMp,\cD/\Fil^0\cD)$ is equivalent to the fact that
$\theta\in
Z^r(\GMp,\Fil^0\cD)$. Now $\theta= \alpha^{-1} \opU_p \theta$, and iterating this we find that
$\theta=\alpha^{-n}\opU_p^n \theta$. Thus by Lemma \ref{lemma: Up increases order} we see that $\theta$ lies in
$Z^r(\GMp,\Fil^n\cD)$ for all
$n$, and it must be $\theta=0$.
\end{proof}

\subsubsection*{Proof of proposition \ref{prop: analogous to PP}}
The proof is essentially the same as in
\cite{pollack-pollack}, but keeping track of the cocycles and not just the cohomology classes. It is
important to mention that the proof is actually constructive, and it provides with a very efficient method for
algorithmically computing such lifts. 

First we show the existence of $\Theta$. Let $\tilde\theta_0\in C^r(\GMp,\cD)$ be an arbitrary lift of $\theta_0$, and
for $n>0$
define $\tilde
\theta_n:=\alpha^{-n}\opU_p^n \tilde\theta_0$. Since $\theta_0$ is a cocycle and $\tilde\theta_0$ is a lift of
$\theta_0$, we have that $\partial^r\tilde\theta_0\in C^{r+1}(\GMp,\Fil^0\cD)$. Now 
\[
 \partial^r\tilde\theta_n=\alpha^{-n}\partial^r(\opU_p^n \tilde\theta_0)= \alpha^{-n}\opU_p^n
(\partial^r\tilde\theta_0);
\]
by Lemma \ref{lemma: Up increases order} this takes values in $\Fil^n\cD$. Let $\theta_n$ be the image of
$\tilde\theta_n$ in \[C^r(\GMp,\cD/\Fil^n\cD).\]
We have seen that, in fact, $\theta_n\in
Z^r(\GMp,\cD/\Fil^n\cD)$. Since
$\opU_p \theta_0=\alpha\theta_0$, we have that $\opU_p\tilde\theta_0-\alpha\tilde\theta_0$ belongs to
$C^r(\GMp,\Fil^0\cD)$. Therefore, one easily checks that $\opU_p \tilde \theta_n-\alpha\tilde\theta_n$ lies in
$C^r(\GMp,\Fil^n\cD)$, and we see that $\opU_p \theta_n=\alpha\theta_n$. Also, it is easy to see that
$\tilde\theta_n-\tilde\theta_{n-1}$ is in $C^r(\GMp,\Fil^n\cD)$. Then we can define $\Theta$ as
\[
\Theta=\{\theta_n\}\in \varprojlim Z^r(\GMp,\cD/\Fil^n\cD)=Z^r(\GMp,\cD).
\]
By construction $\Theta$ lifts $\theta_0$ and $\opU_p \Theta=\alpha\Theta$.

Now in order to prove uniqueness, let $\Theta'\in Z^n(\GMp,\cD)$ be an element that lifts $\theta_0$ and such that
$\opU_p\Theta'=\alpha\Theta'$. The difference $\Theta-\Theta'$ will be an element in the kernel of
\[Z^r(\GMp,\cD)\ra Z^r(\GMp,\cD/\Fil^0\cD)
\]
such that $\opU_p(\Theta-\Theta')=\alpha(\Theta-\Theta')$. By Lemma \ref{lemma: kernel of reduction} we
have that $\Theta-\Theta'=0$.

\qed

We will apply Proposition \ref{prop: analogous to PP} to the cocycle $\varphi=\varphi_E\in Z^1(\GMp,\Z)$ attached to
$E$ (and to a choice of sign at infinity) that we fixed in \S \ref{subsection: the cohomology class}. Indeed, since
\[
\Fil^0\cD = \{\nu\in\cD(\Z_p)\colon \nu(1)=0\}
\]
the map $\nu\mapsto \nu(1)$ induces an isomorphism
$
\cD/\Fil^0\cD\cong \Z_p.
$
Thus $\varphi$ can be naturally seen, after extending scalars to $\Z_p$, as a $1$-cocycle \[\varphi\in
Z^1(\GMp,\Z_p)=Z^1(\GMp,\cD/\Fil^0\cD).\]
Since $U_p\varphi=a_p\varphi$ with $a_p\in\{\pm 1\}$,  as a direct application of Proposition \ref{prop:
analogous to PP} we have:
\begin{proposition}\label{prop:uniquelifting}
 There exists a unique  $\Phi\in Z^1(\GMp,\cD)$ lifting $\varphi$ and such that $U_p\Phi=a_p\Phi$.
\end{proposition}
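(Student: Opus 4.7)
The plan is to observe that this proposition is essentially an immediate application of Proposition \ref{prop: analogous to PP} in the case $r=1$, with $\theta_0 = \varphi$ and $\alpha = a_p$, so there is very little actual work to be done beyond checking that the hypotheses apply.

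First, I would make precise the identification $\cD/\Fil^0\cD \cong \Z_p$: by the very definition of $\Fil^0\cD = \{\nu \in \cD : \nu(1) = 0\}$, the map $\nu \mapsto \nu(1)$ descends to an isomorphism $\cD/\Fil^0\cD \xrightarrow{\sim} \Z_p$, which is $\Sigma_0(p)$-equivariant since $\gamma \cdot \nu(1) = \nu(1)$ for any $\gamma \in \Sigma_0(p)$ (the constant function $1$ is fixed by the action). Consequently, after extending scalars from $\Z$ to $\Z_p$, the cocycle $\varphi \in Z^1(\GMp,\Z)$ fixed in \S\ref{subsection: the cohomology class} can be viewed as an element of $Z^1(\GMp,\cD/\Fil^0\cD)$.

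Next, I would note that $\varphi$ satisfies $U_p \varphi = a_p \varphi$ by property (ii) in the list preceding Theorem \ref{th: greenberg} (recall $\ell = p$ divides $pM$). Since $E$ has multiplicative reduction at $p$ (as $p \mid N$ exactly), the eigenvalue $a_p$ lies in $\{\pm 1\}$ and in particular is a unit in $\Z_p^\times$. Thus the hypotheses of Proposition \ref{prop: analogous to PP} are fulfilled with $r=1$, $\theta_0 = \varphi$, and $\alpha = a_p$.

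Applying that proposition directly produces a unique $\Phi \in Z^1(\GMp,\cD)$ which lifts $\varphi$ and satisfies $U_p \Phi = a_p \Phi$. There is no serious obstacle here; the whole content of the statement has already been absorbed into the constructive proof of Proposition \ref{prop: analogous to PP} (via Lemmas \ref{lemma: Up increases order} and \ref{lemma: kernel of reduction}), and the present proposition merely records the concrete instance that will be used to compute the moments of $\mu$ in the algorithm of the next subsection.
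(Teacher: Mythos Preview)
Your proposal is correct and follows exactly the paper's approach: the paper also records the identification $\cD/\Fil^0\cD\cong\Z_p$ via $\nu\mapsto\nu(1)$, views $\varphi$ as a cocycle with values in $\cD/\Fil^0\cD$, notes that $U_p\varphi=a_p\varphi$ with $a_p\in\{\pm 1\}\subset\Z_p^\times$, and then simply invokes Proposition~\ref{prop: analogous to PP}. Your write-up is slightly more detailed (e.g., the equivariance check and the reason $a_p$ is a unit), but the argument is identical.
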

The proof of Proposition \ref{prop: analogous to PP} gives an effective method for computing (approximations to) $\Phi$:
one takes any cochain $\tilde \Phi$ in $C^1(\GMp,\cD)$ that lifts
$\varphi$, and iterates $a_p\opU_p$. After $k$ iterations, the natural
image of the resulting cochain $a_p^k\opU_p^k\tilde\Phi$ belongs to $Z^1(\GMp,\cD/\Fil^k\cD)$, and we can think of it
as an approximation to the desired $\Phi$, correct up to an element of $Z^1(\GMp,\Fil^k\cD)$. 

Let $g_1,\dots,g_t$ be the generators of $\GMp$, explicitly provided by Voight's algorithms
\cite{voight-fundamental}. If $g\in\GMp$ we can express $\Phi_g$ in terms of the $\Phi_{g_j}$ by means of the cocycle
relation of $\Phi$. A possible choice for $\tilde\Phi$ is then the chain determined by:
\[
 \tilde \Phi_{g_j}(1)=\varphi_{g_j}, \quad \tilde\Phi_{g_j}(t^i)=0 \quad \text{ for } i>0.
\]
The action of $a_pU_p$ is computed by means of formula \eqref{eq: def U_p}. After $k$ iterations only the values
$a_p\tilde \Phi_{g_j}(t^i)$ with $i\leq k$ will be different from $0$, and the resulting chain will be equal to $\Phi$
modulo $\Fil^k\cD$. Namely, we will have computed the quantities
\begin{align}\label{eq: the Phis that we compute}
 \Phi_{g_j}(t^i) \pmod{p^{k-i+1}},\quad i=0,\dots,k.
\end{align}

The next step is to show that $\Phi_g(t^i)$ is equal to the moment $\omega_g(i)$ for any $g\in\GMp$. This means that
the moments $\omega_g(i)\pmod{p^{k-i+1}}$ for $g\in\GMp$ can be computed by the method explained above.
\begin{proposition}
 Let $h$ be an analytic function on $\Z_p$ and $g\in\GMp$. Then
\[
\Phi_g(h(t)) = \int_{\Z_p} h(t)d\mu_g(t).
\]
\end{proposition}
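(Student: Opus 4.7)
The plan is to apply the uniqueness statement of Proposition~\ref{prop:uniquelifting}. I would define a candidate cochain $\Psi \in C^1(\GMp, \cD)$ by
\[
\Psi_g(h) = \int_{\Z_p} h(t)\,d\mu_g(t), \qquad g \in \GMp,
\]
where $h$ is a locally analytic $\Z_p$-valued function on $\Z_p$. Since $\mu_g|_{\Z_p}$ is a bounded $\Z$-valued measure, $\Psi_g$ is a well-defined element of $\cD$, and by $p$-adic linearity and continuity it suffices to verify $\Psi_g(h) = \Phi_g(h)$ on monomials $h(t) = t^n$. I would then check that $\Psi$ satisfies the three properties that characterize $\Phi$ uniquely: (a) $\Psi \in Z^1(\GMp, \cD)$; (b) $\Psi$ reduces to $\varphi$ modulo $\Fil^0\cD$; (c) $U_p \Psi = a_p \Psi$.

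Property (b) is essentially built into the construction. For $g \in \GMp$ the element $g$ fixes $e_*$, so the defining relation $\gamma_{e_*} g = h(g, e_*) \gamma_{g^{-1}(e_*)}$ forces $h(g, e_*) = g$; hence $\Psi_g(1) = \mu_g(\Z_p) = \varphi_g$ by~\eqref{eq: definition of mu}. Property (a) follows from the cocycle relation $\mu_{g_1 g_2} = \mu_{g_1} + g_1 \mu_{g_2}$ restricted to $\Z_p$: since $g_1 \in \GMp$ stabilizes the ball $\Z_p$, the change of variables $u = g_1^{-1} t$ is legal and turns $\int_{\Z_p} h\,d(g_1 \mu_{g_2})$ into $\int_{\Z_p} h(g_1 u)\,d\mu_{g_2}(u) = (g_1 \Psi_{g_2})(h)$, giving exactly the cocycle identity $\Psi_{g_1 g_2}(h) = \Psi_{g_1}(h) + (g_1 \Psi_{g_2})(h)$.

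The main obstacle is (c). To establish $U_p \Psi = a_p \Psi$, I would unfold the formula~\eqref{eq: def U_p} and perform the substitution $u = s_i \cdot t$ inside each summand, obtaining
\[
(U_p \Psi)(g)(h) = \sum_{i=1}^{p} \Psi_{t_i(g)}(h \circ s_i) = \sum_{i=1}^{p} \int_{s_i \Z_p} h(u)\,d(s_i \mu_{t_i(g)})(u).
\]
The local shape $\iota_p(s_i) = \smtx{p}{-i}{0}{1} u_i'$ from~\eqref{eq: local s_i}, together with $u_i' \in \Gamma_0^{\mathrm{loc}}(p)$, shows that the $p$ balls $s_i \Z_p$ partition $\Z_p$, so the right-hand side rewrites as a single integral over $\Z_p$ of $h$ against the measure $\sum_i (s_i \mu_{t_i(g)})\mathbf{1}_{s_i \Z_p}$. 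I would then identify this measure with $a_p\mu_g|_{\Z_p}$ by evaluating both on sub-balls $U_e \subset \Z_p$: using the defining formula $\mu_g(e) = \varphi_{h(g, e)}$ of~\eqref{eq: definition of mu} together with the functional relation $s_i t_i(\gamma) = \gamma s_{\gamma \cdot i}$ coming from~\eqref{eq: def of t_k}, the desired equality reduces to the scalar eigenvalue identity $U_p \varphi = a_p \varphi$ applied at the group element $h(g, e)$, with the matching of summation indices supplied by the permutation $i \mapsto \gamma \cdot i$ of $\{1, \ldots, p\}$.

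Once (a), (b), (c) are established, Proposition~\ref{prop:uniquelifting} gives $\Psi = \Phi$, which is exactly the identity claimed.
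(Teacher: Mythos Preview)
Your overall strategy is exactly the paper's: define $\Psi$ by the integral, verify it is a cocycle lifting $\varphi$ with $U_p\Psi=a_p\Psi$, and invoke uniqueness. Parts (a) and (b) are fine and match the paper verbatim. The gap is in (c).

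When you test the identity $\sum_i (s_i\mu_{t_i(g)})\mathbf 1_{s_i\Z_p}=a_p\mu_g|_{\Z_p}$ on a ball $U_e\subsetneq\Z_p$, only the single index $j$ with $U_e\subset s_j\Z_p$ survives, so what must be proved is the \emph{pointwise} equality $\mu_{t_j(g)}(s_j^{-1}U_e)=a_p\mu_g(U_e)$ for each $j$ separately; there is no sum and hence no permutation $i\mapsto\gamma\cdot i$ to exploit. The eigen-equation $U_p\varphi=a_p\varphi$ by itself only gives you the case $U_e=\Z_p$. For a proper sub-ball the paper shows (Lemma~\ref{lemma:measure-key-result}) that, writing $\mu_{t_j(g)}(U_{e'})=\varphi_b$, the other side unwinds to $a_p\mu_g(s_jU_{e'})=-a_p\varphi_{\omega_p^{-1}b\omega_p}=-a_p(W_p\varphi)_b$; the minus sign appears because $s_jU_{e'}$ is the complement of a ball indexed by an element of $\cY$. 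Equality with $\varphi_b$ then uses the Atkin--Lehner relation $W_p\varphi=-a_p\varphi$, i.e.\ the fact that $\varphi$ is $p$-new. Both ingredients---the combinatorics of the radial system (Lemmas~\ref{lemma: conjugating the gamma_e} and~\ref{lemma:property-cY}, guaranteeing that $\omega_p^{-1}\gamma_e\omega_p\gamma_k\in\cY$) and the $p$-newness via $W_p$---are essential and are absent from your sketch. Without them the reduction you describe does not go through.
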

\begin{proof}
 Let $\Psi$ be the cochain $\Psi\in C^1(\GMp,\cD)$ defined by the formula
\[
\Psi_g(h(t)) = \int_{\Z_p} h(t)d\mu_g(t).
\]
We will show that $\Psi$ is a cocycle, which lifts $\varphi$, and which satisfies $U_p\Psi=a_p\Psi$. This will finish the proof, because the uniqueness part of Proposition \ref{prop: analogous to PP} will imply that
$\Psi=\Phi$.

That $\Phi$ lifts $\varphi$ is an immediate consequence of property \ref{eq: property of m} of Theorem \ref{th: greenberg}. The cocycle property of $\mu$ implies that of $\Psi$:
\begin{align*}
  \Psi_{gh}(h(t)) &= \int_{\Z_p}h(t)d\mu_{gh}(t)=\int_{\Z_p}h(t)d(\mu_g(t) + \mu_h(g^{-1}t))\\
&=\int_{\Z_p} h(t)d\mu_g(t) + \int_{\Z_p} h(gt)d\mu_h(t)=\Psi_g(h(t)) + (g\cdot\Psi_h)(h(t)),
\end{align*}
where the second equality follows from a change of variables and the fact that $g^{-1}\ZZ_p = \ZZ_p$ for all $g\in\GMp$. As for the last claim, it follows from the computation:
\begin{align*}
(\opU_p\Psi)_g(h(t)) &= \sum_{i=1}^{p} \int_{\Z_p}h(s_it)d\mu_{t_i(g)}(t)\stackrel{(*)}{=}  \sum_{i=1}^{p} \int_{\Z_p}h(s_it)d\left(a_p \mu_g(s_it)\right)\\
&= a_p \sum_{i=1}^{p} \int_{s_i\Z_p}h(t)d\mu_g(t)=a_p\int_{\Z_p}h(t)d\mu_g(t)=a_p\Psi_g(h(t)),
\end{align*}
where the equality $(*)$ is justified by Lemma \ref{lemma:measure-key-result} below.
\end{proof}
We remark that Lemma \ref{lemma:measure-key-result}, although of a technical nature, provides the key calculation in the proof of the above proposition. Before proving it, we need two easy lemmas. 
\begin{lemma}\label{lemma: conjugating the gamma_e}
 Suppose $\gamma_e\in\cY$ is of the form $\gamma_e=\tilde\gamma_{i_1}\gamma_{j_1}\cdots
\tilde\gamma_{i_n}\gamma_{j_n}$ with all $i_k,j_k> 0$. Then
$\omega_p^{-1}\gamma_e\omega_p=\gamma_{i_1}\tilde\gamma_{j_1}\cdots
\gamma_{i_n}\tilde\gamma_{j_n}$.
\end{lemma}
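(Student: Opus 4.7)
The plan is to prove the identity by induction on $n$, using the defining relation $\tilde\gamma_i = p^{-1}\omega_p\gamma_i\omega_p$ to rewrite the alternating product one pair at a time. The two key algebraic consequences of this definition that I would exploit are
\[
\omega_p^{-1}\tilde\gamma_i = p^{-1}\gamma_i\omega_p \quad\text{and}\quad \omega_p\gamma_j = p\,\tilde\gamma_j\omega_p^{-1},
\]
both of which are immediate from $p\tilde\gamma_i = \omega_p\gamma_i\omega_p$ by multiplying on the left or right by $\omega_p^{\pm 1}$.

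For the induction step, I would start from $\omega_p^{-1}\gamma_e\omega_p = \omega_p^{-1}\tilde\gamma_{i_1}\gamma_{j_1}\cdots\tilde\gamma_{i_n}\gamma_{j_n}\omega_p$ and apply the first identity to the leading factor, obtaining
\[
\omega_p^{-1}\gamma_e\omega_p \;=\; p^{-1}\gamma_{i_1}\,\omega_p\gamma_{j_1}\,\tilde\gamma_{i_2}\gamma_{j_2}\cdots\tilde\gamma_{i_n}\gamma_{j_n}\,\omega_p.
\]
Then, applying the second identity to the block $\omega_p\gamma_{j_1}$, the two factors of $p$ cancel and I am left with
\[
\omega_p^{-1}\gamma_e\omega_p \;=\; \gamma_{i_1}\tilde\gamma_{j_1}\cdot\bigl(\omega_p^{-1}\tilde\gamma_{i_2}\gamma_{j_2}\cdots\tilde\gamma_{i_n}\gamma_{j_n}\omega_p\bigr).
\]
The parenthesized expression is of exactly the same shape as the original, but with $n-1$ alternating pairs, so by the induction hypothesis (with base case $n=0$ trivial) it equals $\gamma_{i_2}\tilde\gamma_{j_2}\cdots\gamma_{i_n}\tilde\gamma_{j_n}$, yielding the desired identity.

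There is no real obstacle here: the statement is a purely formal telescoping computation in the quaternion algebra, carried out at the level of elements of $B^\times$ rather than cosets, so no subtlety about $\GMp$-coset representatives or the fact that $p^{-1}\omega_p^2\in\GMp$ enters. The only thing one should be slightly careful about is that the identities above are used in the correct direction so that the stray powers of $p$ cancel at each step; this is ensured precisely because the hypothesis $i_k,j_k>0$ for all $k$ means every factor of the product is a genuine $\tilde\gamma_i$ or $\gamma_j$ (as opposed to $\tilde\gamma_0=\gamma_0=1$), so each pair produces exactly one factor of $p^{-1}$ from the first identity and one factor of $p$ from the second.
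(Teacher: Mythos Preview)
Your proof is correct and follows essentially the same inductive strategy as the paper's: both arguments use the defining relation $\tilde\gamma_i=p^{-1}\omega_p\gamma_i\omega_p$ (valid for $i>0$) to telescope the conjugation one pair at a time. The only cosmetic differences are that you peel off the leftmost pair $\tilde\gamma_{i_1}\gamma_{j_1}$ and take $n=0$ as the base case, whereas the paper peels off the rightmost pair $\tilde\gamma_{i_n}\gamma_{j_n}$ and starts the induction at $n=1$; neither choice affects the substance of the argument.
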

\begin{proof}
 We will see it by induction. If $n=1$ we have that $\gamma_e=\tilde\gamma_{i_1}\gamma_{j_1}$,
and then
\[
 \omega_p^{-1}\gamma_e\omega_p=\omega_p^{-1}(p^{-1}\omega_p\gamma_{i_1}\omega_p)\gamma_{j_1}\omega_p=\gamma_{i_1
}p^{-1}\omega_p\gamma_{j_1}\omega_p=\gamma_{i_1}\tilde\gamma_{j_1}.
\]
For $n>1$ we write $\gamma_e=\gamma_{e'}\tilde\gamma_{i_n}\gamma_{j_n}$, where
$\gamma_{e'}=\tilde\gamma_{i_1}\gamma_{j_1}\cdots \tilde\gamma_{i_{n-1}}\gamma_{j_{n-1}}$. Then
\[
 \omega_p^{-1}\gamma_e\omega_p=(\omega_p^{-1}\gamma_{e'}\omega_p )(\omega_p^{-1}\tilde\gamma_{i_n}\gamma_{j_n}\omega_p)
\]
and now the result follows directly from the induction hypothesis.
\end{proof}

\begin{lemma}\label{lemma:property-cY}
 Let $\gamma_e\in\cY$ be such that $U_e\subseteq \Z_p$. Then $\omega_p^{-1}\gamma_e\omega_p\gamma_k$ belongs to
$\cY$ for all $k=0,\dots,p$.
\end{lemma}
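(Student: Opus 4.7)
My plan is to combine Corollary~\ref{cor: opens in Z_p} with Lemma~\ref{lemma: conjugating the gamma_e} and then verify that the resulting word lies in $\cY$ by the recursive structure built into Definition~\ref{def: radial system}. Since $U_e\subseteq\Z_p$, the Corollary yields the normal form $\gamma_e = \tilde\gamma_{i_1}\gamma_{j_1}\cdots\tilde\gamma_{i_n}\gamma_{j_n}$ with all $i_s,j_s\in\{1,\ldots,p\}$ and some $n\geq 0$ (the case $n=0$ corresponds to $\gamma_e=1$, i.e.\ $e=e_*$). Applying the preceding Lemma, I obtain $\omega_p^{-1}\gamma_e\omega_p = \gamma_{i_1}\tilde\gamma_{j_1}\cdots\gamma_{i_n}\tilde\gamma_{j_n}$, an alternating product with all indices in $\{1,\ldots,p\}$ (ending in a $\tilde\gamma$ when $n\geq 1$).

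Right-multiplying by $\gamma_k$ then produces either $\omega_p^{-1}\gamma_e\omega_p$ itself (when $k=0$, since $\gamma_0=1$) or the extended alternating product $\gamma_{i_1}\tilde\gamma_{j_1}\cdots\gamma_{i_n}\tilde\gamma_{j_n}\gamma_k$ (when $k\geq 1$). In either case one obtains an alternating word in the $\gamma_i$ and $\tilde\gamma_j$ with all positive indices; the degenerate case $n=0$ reduces to $1$ or $\gamma_k$, both of which are manifestly in $\cY$.

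To finish, I will verify by induction on word length that every such alternating product with positive indices belongs to $\cY$. Starting from $\gamma_{v_*}=\gamma_{\hat v_*}=1$ (condition~\ref{item: rs 1}), condition~\ref{item: rs 2} declares $\{\gamma_i\gamma_v\}_{i=0}^p$ to be the edge representatives at an even vertex $v$ whose label $\gamma_v$ is already known, and condition~\ref{item: rs 3} plays the symmetric role at odd vertices; conditions~\ref{item: rs 4} and~\ref{item: rs 5} then propagate these edge labels back to vertex labels along outward rays from $v_*$. Iterating exhausts all alternating positive-index products as elements of $\cY$. The only subtle point worth flagging is to confirm that we land on the actual radial representative rather than merely a $\GMp$-equivalent element, but this is automatic because the inductive construction selects at each step the unique representative dictated by Definition~\ref{def: radial system}.
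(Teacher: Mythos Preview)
Your proof is correct and follows essentially the same route as the paper: invoke Corollary~\ref{cor: opens in Z_p} to write $\gamma_e$ in the alternating form, apply Lemma~\ref{lemma: conjugating the gamma_e} to conjugate by $\omega_p$, and then observe that the resulting word (with $\gamma_k$ appended) is one of the representatives produced by the recursive construction of the radial system. The only difference is that the paper dispatches the final membership check with ``clearly belongs to $\cY$,'' whereas you spell out the induction along Definition~\ref{def: radial system}; this extra care is harmless and arguably clarifying.
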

\begin{proof}
 The statement is clear if $\gamma_e=1$. If $\gamma_e\neq 1$, then by Corollary \ref{cor: opens in Z_p} we have that
$\gamma_e$ is of the form
$\gamma_e=\tilde\gamma_{i_1}\gamma_{j_1}\cdots
\tilde\gamma_{i_n}\gamma_{j_n}$. Now by Lemma \ref{lemma: conjugating the gamma_e} we see that
\[
\omega_p^{-1}\gamma_e\omega_p\gamma_k=\gamma_{i_1}\tilde\gamma_{j_1}\cdots
\gamma_{i_n}\tilde\gamma_{j_n}\gamma_k,
\]
which clearly belongs to $\cY$.
\end{proof}
\begin{lemma}
\label{lemma:measure-key-result}
  Let  $g$ be an element in $\GMp$. For each $k=1,\ldots, p$ we have:
\begin{align}
 (\mu_{t_k(g)})_{|\Z_p}=(a_ps_k^{-1}\mu_g)_{|\Z_p};
\end{align}
that is to say, the measures $\mu_{t_k(g)}$ and $a_ps_k^{-1}\mu_g$ coincide when restricted to $\Z_p$.
\end{lemma}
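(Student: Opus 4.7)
The plan is to reduce the claimed equality of measures to checking both sides agree on a basis of compact-open subsets of $\Z_p$, and then to verify the latter directly using the explicit description of $\mu$ together with the Atkin--Lehner eigenproperties of $\varphi$. Both $\mu_{t_k(g)}|_{\Z_p}$ and $(a_ps_k^{-1}\mu_g)|_{\Z_p}$ are restrictions to $\Z_p$ of $\Z$-valued harmonic measures on $\P^1(\Q_p)$, so by additivity it is enough to verify the identity on the balls $U_e$ with $e\in\cE^+$ and $\gamma_e=\tilde\gamma_{i_1}\gamma_{j_1}\cdots\tilde\gamma_{i_n}\gamma_{j_n}$ from Corollary~\ref{cor: opens in Z_p}; any ball $U_{\bar{e''}}\subseteq\Z_p$ decomposes as a disjoint union of such $U_e$'s, so the identity there follows by summing.

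Next I identify the ball $s_kU_e$ via the radial system. Writing $s_k^{-1}=\omega_p\gamma_k$ and using Lemma~\ref{lemma: conjugating the gamma_e} followed by Lemma~\ref{lemma:property-cY}, one obtains $\gamma_e s_k^{-1}=\omega_p\gamma_{e'}$ with
\[
\gamma_{e'}:=(\omega_p^{-1}\gamma_e\omega_p)\gamma_k=\gamma_{i_1}\tilde\gamma_{j_1}\cdots\gamma_{i_n}\tilde\gamma_{j_n}\gamma_k\in\cY.
\]
Hence $s_kU_e=\gamma_{e'}^{-1}\omega_p^{-1}\Z_p=U_{\bar{e'}}$, and the harmonic cocycle property gives $\mu_g(s_kU_e)=-\mu_g(e')=-\varphi_{h(g,e')}$. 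The left-hand side, by definition of $\mu$, equals $\varphi_{h(t_k(g),e)}$, so the lemma reduces to the equality $\varphi_{h(t_k(g),e)}=-a_p\,\varphi_{h(g,e')}$.

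The heart of the argument is the identity $h(g,e')=\omega_p^{-1}h(t_k(g),e)\omega_p$, and it is here that the main bookkeeping obstacle lies. To establish it, rewrite the defining relation $g^{-1}s_k=s_{g\cdot k}t_k(g)^{-1}$, using $s_k=\gamma_k^{-1}\omega_p^{-1}$, in the form $\omega_p\gamma_k g=t_k(g)\omega_p\gamma_{g\cdot k}$, and then compute
\[
\gamma_{e'}g=\omega_p^{-1}\gamma_e\omega_p\gamma_k g=\omega_p^{-1}\gamma_e t_k(g)\omega_p\gamma_{g\cdot k}=\bigl(\omega_p^{-1}h(t_k(g),e)\omega_p\bigr)\cdot\bigl(\omega_p^{-1}\gamma_{t_k(g)^{-1}(e)}\omega_p\gamma_{g\cdot k}\bigr),
\]
where the first factor lies in $\GMp$ (since $\omega_p$ normalises $\GMp$) and the second lies in $\cY$ by Lemmas~\ref{lemma: conjugating the gamma_e} and~\ref{lemma:property-cY} applied to $\gamma_{t_k(g)^{-1}(e)}$ (noting that $\GMp$ preserves $\Z_p$, so $U_{t_k(g)^{-1}(e)}\subseteq\Z_p$). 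Uniqueness of the decomposition $\Gamma=\GMp\cdot\cY$ then forces the stated identity. To conclude, I invoke the Atkin--Lehner relation $W_p\varphi=-a_p\varphi$ on the one-dimensional $p$-new Hecke eigenspace generated by $\varphi$ (a standard consequence of $a_p\in\{\pm1\}$ in the $p$-new subspace): this gives $\varphi_{h(g,e')}=-a_p\varphi_{h(t_k(g),e)}$, and together with $a_p^2=1$ yields the required equality.
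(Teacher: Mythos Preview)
Your proof is correct and follows essentially the same approach as the paper's own argument: reduce to balls $U_e\subseteq\Z_p$ indexed by the radial system, identify $s_kU_e$ as the complement of $(\omega_p^{-1}\gamma_e\omega_p\gamma_k)^{-1}\Z_p$, use Lemmas~\ref{lemma: conjugating the gamma_e} and~\ref{lemma:property-cY} to track both sides through the decomposition $\Gamma=\GMp\cdot\cY$, and conclude via the Atkin--Lehner relation on the $p$-new eigenspace. The only cosmetic difference is that the paper phrases the final step as $-(W_p\varphi)_b=(U_p\varphi)_b=a_p\varphi_b$ rather than invoking $W_p\varphi=-a_p\varphi$ directly, which amounts to the same identity.
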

\begin{proof}
It is enough to show that for every  $U_e\subset \Z_p$ one has
\begin{align}
\label{eq:measure}
\mu_{t_k(g)}(U_e)&=a_p\mu_g(s_kU_e).
\end{align}
Recall that $U_e=\gamma_e^{-1}\Z_p$ with $\gamma_e\in\cY$. By the definition of $\mu$ (see \eqref{eq: definition of mu})
 we have that
\[
\mu_{t_k(g)}(U_e)=\varphi_b,
\]
where $b\in\GMp$ is the element uniquely determined by the equation
\begin{align}
\label{eq:stareq}
 \gamma_et_k(g)&=b \gamma_{e'},\ \ \text{for some $\gamma_{e'}\in\cY$}.
\end{align}
Because of the definition of $t_k(g)$ (see \eqref{eq: def of t_k}) we have
\[
\gamma_et_k(g)=\gamma_e s_k^{-1}g s_{g\cdot k} = \gamma_e \omega_p \gamma_k g \gamma_{g\cdot k}^{-1} \omega_p^{-1},
\]
and combining this with \eqref{eq:stareq} we obtain
\begin{align}\label{eq: b}
\gamma_e \omega_p \gamma_k g = b \gamma_{e'} \omega_p \gamma_{g\cdot k}.
\end{align}

Now to calculate the right-hand side of~\eqref{eq:measure} we need to consider the open
\begin{align*}
s_kU_e &= s_k \gamma_e^{-1}\Z_p = \gamma_k^{-1} \omega_p^{-1} \gamma_e^{-1}\Z_p = \left(
\omega_p^{-1}\gamma_e\omega_p\gamma_k \right)^{-1}\omega_p^{-1}\Z_p\\
&= \PP^1(\Q_p)\setminus\left( \left(
\omega_p^{-1}\gamma_e\omega_p\gamma_k \right)^{-1}\Z_p\right).
\end{align*}
Therefore, the measure on the right-hand side of~\eqref{eq:measure} can be computed as
\begin{align}\label{eq: right-hand-side}
\mu_g(s_kU_e)=-\mu_g((\omega_p^{-1}\gamma_e\omega_p\gamma_k)^{-1}\Z_p).
\end{align}
Note that $\omega_p^{-1}\gamma_e\omega_p\gamma_k\in\cY$ thanks to Lemma~\ref{lemma:property-cY}, so 
in order to compute \eqref{eq: right-hand-side} we use \eqref{eq: b} to get the identity
\begin{align*}
\omega_p^{-1}\gamma_e\omega_p\gamma_k g &=  \omega_p^{-1} b \gamma_{e'} \omega_p \gamma_{g\cdot k}=(\omega_p^{-1} b\omega_p ) \omega_p^{-1} \gamma_{e'} \omega_p\gamma_{g\cdot k}.
\end{align*}
Now observe that $\gamma_{e'}^{-1}\Z_p\subset\Z_p$, so again Lemma~\ref{lemma:property-cY} gives that $\omega_p^{-1}
\gamma_{e'} \omega_p\gamma_{g\cdot k}\in\cY$ and we see that
\[
 \mu_g(s_kU_e)=-\varphi_{\omega_p^{-1} b \omega_p}=-(W_p\varphi)_b=(U_p\varphi)_b=a_p\varphi_b,
\]
and this concludes the proof.
\end{proof}

\section{Implementation and numerical evidence}\label{sec: numerical evidence}
We have implemented\footnote{The code is
available at \texttt{https://github.com/mmasdeu/darmonpoints}.} the algorithms of Sections \ref{sec: the algorithm} and \ref{sec: overconvergent} in
Sage~\cite{sage} and Magma \cite{magma}. Thanks to the overconvergent method we have been able to compute the integrals
up to a precision
of $p^{60}$, although one can easily reach much higher precision if needed. Recall the sample calculation of Section~\ref{subsection: example riemann}, which we have recalculated using the overconvergent method. In Table~\ref{table:timings-overconvergent} we list the time $t_1$ that it took to lift the original cocycle to the target precision $n$, and the time $t_2$ that it took to integrate the cycle to obtain $J_\tau$ with the target precision. One observes, as expected from the analysis carried out in~\cite{darmon-pollack} and which would easily carry over to our setting, that the complexity of the algorithms is polynomial (indeed quadratic). Note also that, while it took $1158$ seconds to obtain $3$ digits of precision using Riemann products, it took less than a third of this time to obtain $55$ digits of precision using the overconvergent method.
\begin{table}[h]
\begin{minipage}[t]{0.4\textwidth}
\centering
\begin{tabular}{rrrr}
\toprule
$n$& $t_1$ (s) & $t_2$ (s) &$t_1+t_2$ (s)\\
\midrule
5&	11&	6&	17\\
10&	24&	7&	31\\
15&	36&	9&	45\\
20&	55&	12&	67\\
25&	78&	16&	94\\
30&	108&	21&	129\\
35&	149&	26&	175\\
40&	191&	32&	223\\
45&	245&	39&	284\\
50&	307&	46&	353\\
55&	395&	55&	450\\
\bottomrule
\end{tabular}

\end{minipage}
\begin{minipage}{0.53\textwidth}
\includegraphics[width=\textwidth]{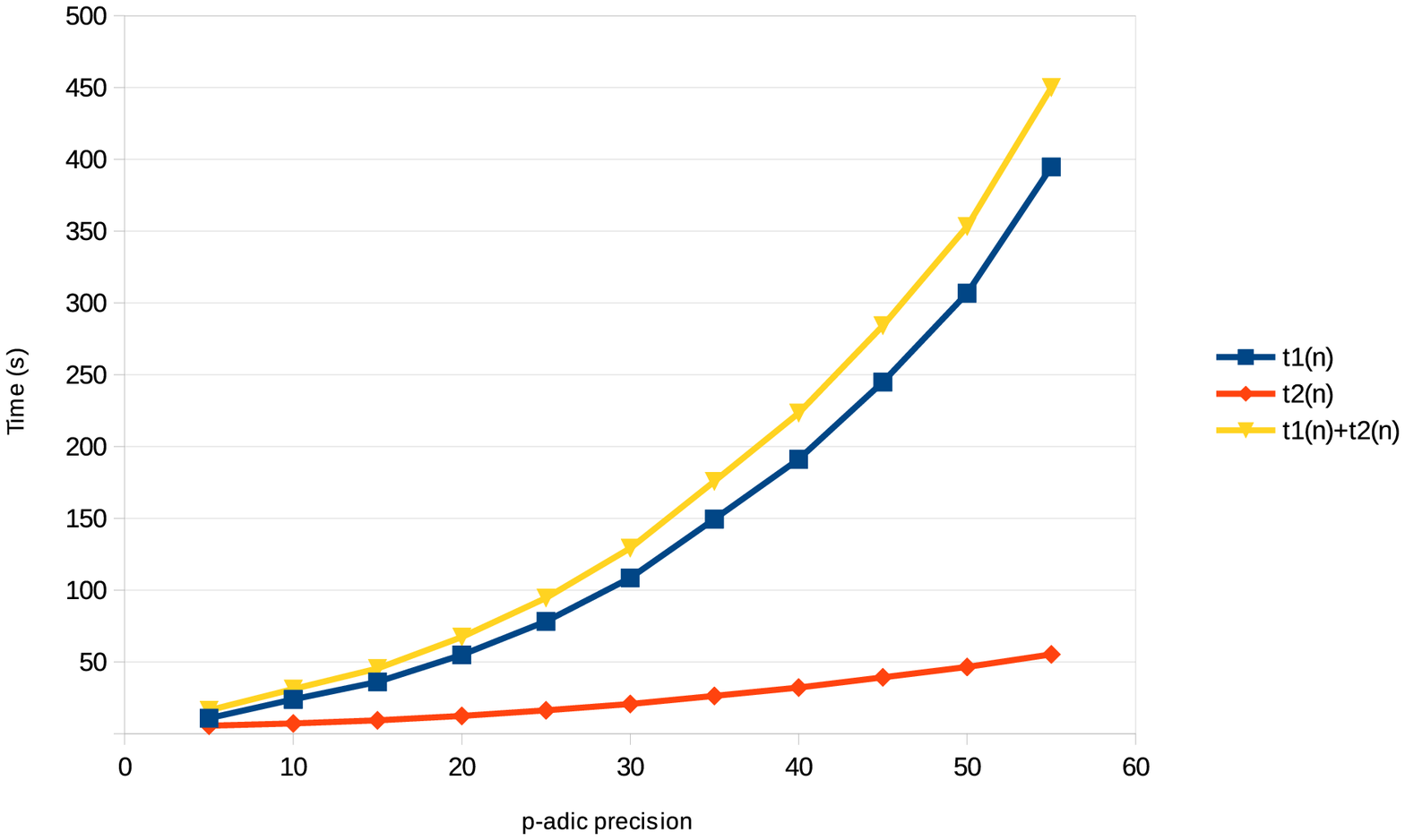}
\end{minipage}
\vspace{.4cm}
\caption{Running time increases sub-quadratically with the precision $n$.}
\label{table:timings-overconvergent}
\end{table}

Another salient feature of the overconvergent method is that one can regard the lifting of the cohomology class as a precomputation which depends only on the elliptic curve and the prime $p$. Note that, as the table indicates, this is what dominates the computing time. With this precomputation at hand, one can perform several integrals of different cycles (that is, yielding points attached to different real quadratic fields) with little extra effort. All this allows for a direct computation of rational
points, as opposite to the example of \S\ref{subsection: example riemann}, in which the low precision only permitted to
compare the computed Darmon point with an algebraic point previously found by naive search. 

Indeed, let $J_\tau\in K_p^\times$ be a Darmon point and let $P_\tau\in E(K_p)$ denote its image under Tate's
uniformization, whose coordinates conjecturally belong to a number field $H$. Using the algorithms described in this
article on can compute an approximation to $J_\tau$, and therefore to $P_\tau$. Then one can try to recognize its
coordinates as algebraic numbers via standard reconstruction techniques (see for instance \cite[\S1.6]{darmon-pollack}).
For this to work, the number of correct digits one needs to know of $J_\tau$ is roughly the height of $P_\tau$.

One difficulty that arises in this method is that $P_\tau$ is usually a multiple of the generator of
$E(H)/E(H)_{\text{tors}}$, say $P_\tau=n P_\tau'$. Therefore, $P_\tau$ might have very large height, even if
the generator $P_\tau'$ had small height. In this case it is easier to reconstruct $P_\tau'$, which has smaller height.
Note that $P_\tau'$ is the image under Tate's uniformization of an element of the form
\[
 J_\tau'=\zeta \exp\left(\frac{1}{n}{\log J_\tau}\right),
\]
where $\zeta$ is some Teichm\"uller representative in $K_p$. Therefore, since we can compute good approximations to 
$\log
J_\tau$, we can try to reconstruct $J_\tau'$ by trial and error on $\zeta$.

As a first example, consider the curve with Cremona label 78a1 with equation
\[
E:\quad y^2 + xy = x^3 + x^2 - 19x + 685.
\]
Table~\ref{table:curve78a1} lists points on $E(\QQ(\sqrt{d_K}))$ for those discriminants $d_K< 600$ in which $2$, $3$ 
and $13$ are inert and such that $K=\QQ(\sqrt{d_K})$ has class number one. They are computed using the plus character 
$\lambda_E^+$ and optimal embeddings of the maximal order $\cO_K$. Observe that the points are defined over $K$ rather than over abelian extensions, since the class number is one.

Table \ref{table:curve110a1} lists similar computations for the curve with Cremona label 110a1 and equation
\[
 E\colon y^2 + xy + y = x^3 + x^2 + 10x - 45.
\]
Observe that some of the points, e.g. the one over $\QQ(\sqrt{237})$, could have not been found by naive search 
methods due to their height. Table \ref{table:curve110a1bis} shows the same 
points computed with the different factorization of the conductor $110$, namely $p = 11$ and $D=10$. Note that for 
$d_K=277$ we were not able to recognize the point. This is probably due to the fact that the working precision 
($p^{60}$ in this case) is lower, since $p=5$ instead of $p=11$. In these two cases the points obtained are twice the 
expected multiple of the generator. Table~\ref{table:curve114a1} is another example with $D=6$, but in this case some of the points obtained (note e.g. $d_K=269$) have considerable height.

The examples shown above have in common that the group $H_1(\Gamma_0^D(M),\Z)$ is finite. Although our algorithms do not require this condition to be true, the implementation is greatly simplified in this case. However, our implementation works in a broader range of cases. As an example, we have computed an example with $D=15$, where the above group has $\Z$-rank $1$: consider the elliptic curve with Cremona label 285c1 ($285=19\cdot 15$) given by the equation
\[
E\colon y^2 + xy = x^3 + x^2 + 23x -176.
\]
Working with $p=19$ and precision $19^{60}$, our algorithm has been able to recover the point:
\[
P=\left(\frac{372503}{60543} , \frac{60805639}{78826986} \sqrt{413} - \frac{372503}{121086} \right)\in E\left(\QQ(\sqrt{413})\right).
\]
\renewcommand{\arraystretch}{1.4}
\begin{small}
\begin{table}[H]
\begin{tabular}{c|c}
$d_K$  & $P$\\\hline
$5$  & $1\cdot 48\cdot\left(-2, 12\sqrt{5}+1\right)$ \\
$149$  & $1\cdot48\cdot\left(1558,-5040\sqrt{149}-779\right)$ \\
$197$  & $1\cdot 48\cdot\left(\frac{310}{49},\frac{720}{343}\sqrt{197} - \frac{155}{49} \right)$\\
$293$  & $1\cdot 48\cdot\left(40,-15\sqrt{293}-20\right)$\\
$317$  & $1\cdot 48\cdot\left(382,-420\sqrt{317}-191\right)$\\
$437$  & $1\cdot 48\cdot\left(\frac{986}{23}, \frac{7200}{529}\sqrt{437} - \frac{493}{23}\right)$\\
$461$  & $1\cdot 48 \cdot\left(232,-165\sqrt{461}-116\right)$\\
$509$  & $1\cdot 48\cdot\left(-\frac{2}{289},-\frac{5700}{4913}\sqrt{509}+\frac{1}{289}\right)$\\
$557$  & $1\cdot 48\cdot\left(\frac{75622}{121} , \frac{882000}{1331}\sqrt{557} - \frac{37811}{121}\right)$\\
\end{tabular}

\caption{Darmon points on curve 78a1 with $p=13$ and $D=6$.}\label{table:curve78a1}
\end{table}
\begin{table}[H]
\begin{tabular}{c|c}
$d_K$  & $P$\\\hline
$13$  & $2\cdot 30\cdot\left(\frac{1103}{81}-\frac{250}{81}\sqrt{13} ,-\frac{52403}{729} 
+\frac{13750}{729}\sqrt{13} \right)$ \\
$173$  & $2\cdot 30\cdot\left(\frac{1532132}{9025} ,-\frac{1541157}{18050}-\frac{289481483}{1714750}\sqrt{173} 
\right)$ \\
$237$  & $2\cdot 30\cdot \big(\frac{190966548837842073867}{4016648659658412649} 
-\frac{10722443619184119320}{4016648659658412649}\sqrt{237},$\\
& $-\frac{3505590193011437142853233857149}{8049997913829845411423756107}+\frac{
235448460130564520991320372200}{8049997913829845411423756107}\sqrt{237} 
\big )$ \\
$277$  & $2\cdot 30\left(\frac{46317716623881}{12553387541776} 
,-\frac{58871104165657}{25106775083552}-\frac{20912769335239055243}{44477606117965542976}\sqrt{277} 
\right)$ \\
$293$  & $2\cdot 30\cdot\left(\frac{7088486530742}{2971834657801} 
,-\frac{10060321188543}{5943669315602}-\frac{591566427769149607}{10246297476835603402}\sqrt{293} 
\right)$ \\
$373$  & $2\cdot 30 \cdot\left(\frac{298780258398}{62087183929} 
,-\frac{360867442327}{124174367858}-\frac{19368919551426449}{30940899762281434}\sqrt{373} 
\right)$ \\

\end{tabular}

\caption{Darmon points on curve 110a1 with $p=11$ and $D=10$.}\label{table:curve110a1}
\end{table}
\begin{table}[H]
\begin{tabular}{c|c}
$d_K$  & $P$\\\hline
$13$  & $2\cdot 12\cdot\left(4 , \frac{5}{2}\sqrt{13} - \frac{5}{2}\right)$\\
$173$  & $2\cdot 12\cdot\left(\frac{1532132}{9025}, -\frac{289481483}{1714750}\sqrt{173} - \frac{1541157}{18050}\right)$ \\
$237$  & $2\cdot 12\cdot\left(\frac{5585462179}{1193768112} , -\frac{53751973226309}{71439858894528}\sqrt{237} - \frac{6779230291}{2387536224} \right)$ \\
$277$  & ---\\
$293$  & $2\cdot 12\cdot\left(\frac{7088486530742}{2971834657801}, -\frac{591566427769149607}{10246297476835603402}\sqrt{293} - \frac{10060321188543}{5943669315602}\right)$\\
$373$  & $2\cdot 12 \cdot\left(\frac{298780258398}{62087183929} , \frac{19368919551426449}{30940899762281434}\sqrt{373} - \frac{360867442327}{124174367858}\right)$
\end{tabular}

\caption{Darmon points on curve 110a1 with $p=5$ and $D=22$.}\label{table:curve110a1bis}
\end{table}
\begin{table}[H]
\begin{tabular}{c|c}
$d_K$  & $P$\\\hline
$29$  & $1 \cdot 72 \cdot \left(-\frac{6}{25}\sqrt{29} - \frac{38}{25}, -\frac{18}{125}\sqrt{29} + \frac{86}{125}\right)$ \\
$53$  & $1 \cdot 72 \cdot \left(-\frac{1}{9}, \frac{7}{54}\sqrt{53} + \frac{1}{18} \right)$ \\
$173$  & $1 \cdot 72 \cdot \left(-\frac{3481}{13689}, \frac{347333}{3203226}\sqrt{173} + \frac{3481}{27378}\right)$\\
$269$  & $1 \cdot 72 \cdot \Big(\frac{1647149414400}{23887470525361}\sqrt{269} - \frac{43248475603556}{23887470525361},$\\
&$ \frac{2359447648611379200}{116749558330761905641}\sqrt{269} + \frac{268177497417024307564}{116749558330761905641}\Big)$\\
$293$  & $1 \cdot 72 \cdot \left(\frac{21289143620808}{4902225525409}, \frac{4567039561444642548}{10854002829131490673}\sqrt{293} - \frac{10644571810404}{4902225525409}\right)$\\
$317$  & $1 \cdot 72 \cdot \left(-\frac{25}{9}, -\frac{5}{54}\sqrt{317} + \frac{25}{18}\right)$\\
$341$  & $1 \cdot 72 \cdot \left(\frac{3449809443179}{499880896975},\frac{3600393040902501011}{3935597293546963250}\sqrt{341} - \frac{3449809443179}{999761793950}\right)$\\
$413$  & $1 \cdot 72 \cdot \left(\frac{59}{7} ,\frac{113}{98}\sqrt{413} - \frac{59}{14}\right)$
\end{tabular}

\caption{Darmon points on curve 114a1 with $p=19$ and $D=6$.}\label{table:curve114a1}
\end{table}
\end{small}
\clearpage
\bibliographystyle{amsalpha}
\bibliography{refs}
\end{document}